\newtheorem{theorem}{Theorem}[section]
\newtheorem*{theorem*}{Theorem}
\newtheorem{corollary}[theorem]{Corollary}
\newtheorem{lemma}[theorem]{Lemma}
\newtheorem{rem}[theorem]{Remark}
\newtheorem{proposition}[theorem]{Proposition}
\theoremstyle{definition}
\newenvironment{remark}[1][Remark]{\begin{trivlist}
\item[\hskip \labelsep {\bfseries #1}]}{\end{trivlist}}
\newcommand{\ord}{\textbf{Ord}}
\newcommand{\nn}{\mathbb{N}}
\newcommand{\ee}{\varepsilon}
\newcommand{\eee}{\mathcal{E}}
\newcommand{\ttt}{\mathcal{M}}
\newcommand{\uuu}{\mathcal{U}}
\newcommand{\cat}{^\smallfrown}
\begin{document}

\title[An ordinal index characterizing weak compactness]{An ordinal index characterizing \\ weak compactness of operators}
\author{RM Causey}

\begin{abstract} We introduce an ordinal index which characterizes weak compactness of operators between Banach spaces.  We study when classes consisting of operators having bounded index form a closed ideal, the distinctness of the classes, and the descriptive set theoretic properties of this index. 

\end{abstract}

\maketitle

\section{Introduction}

In this work, we introduce an ordinal index for operators on Banach spaces in order to characterize weak compactness.  The construction of such an index was inspired by the Szlenk index, which is used to characterize Asplund operators \cite{Brooker}.  To define our index, we use a characterization of weakly compact operators due to James concerning sequences in the unit ball of the domain such that the images under a given operator have a certain convex separation property. Consequently, we denote our index by $\mathcal{J}$.   For a weakly compact operator $A:X\to Y$ between Banach spaces, we denote the James index of $A$ by $\mathcal{J}(A)$.  For an ordinal $\xi$, we let $\mathscr{J}_\xi$ denote the class consisting of all weakly compact operators $A$ with $\mathcal{J}(A)\leqslant \xi$. We write $\mathcal{J}(A)=\infty$ whenever $A$ fails to be weakly compact, with the precise meaning of this made clear below.    In this work, we establish the following results.  

\begin{theorem} For each ordinal $\xi$, $\mathscr{J}_{\omega^{\omega^\xi}}$ is a closed operator ideal.  The class $\mathscr{J}_\omega$ consists of all super weakly compact operators.  The $\mathcal{J}$ index of an operator on a separable domain is countable if and only if that operator is weakly compact. For every ordinal $\xi$, there exists a weakly compact operator which does not lie in $\mathscr{J}_\xi$, and that operator can be taken to have a separable domain if $\xi$ is countable.  The index $\mathcal{J}$ is a coanalytic rank on the class of operators between separable Banach spaces.  

\end{theorem}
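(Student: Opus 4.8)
\medskip
\noindent\emph{Proof proposal for the final assertion.}
The plan is to realize $\mathcal{J}$, up to order-isomorphism on its domain, as the ordinal rank of a Borel assignment of trees, and then to quote the standard theory of coanalytic ranks. Recall the recipe (see, e.g., Kechris, \emph{Classical Descriptive Set Theory}, \S34): if $Z$ is a standard Borel space and $z\mapsto T_z$ is a Borel map from $Z$ into the space $\mathrm{Tr}$ of trees on $\nn$, then $C:=\{z\in Z:T_z\text{ is well founded}\}$ is coanalytic and $z\mapsto o(T_z)$, the ordinal rank of $T_z$, is a coanalytic rank on $C$. Moreover, if $\psi\colon C\to\omega_1$ satisfies $\psi(x)\leqslant\psi(y)\Leftrightarrow o(T_x)\leqslant o(T_y)$ for all $x,y\in C$, then $\psi$ is a coanalytic rank as well, since the relevant comparison relations coincide. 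So everything reduces to producing such an assignment with $C$ equal to the class $\mathscr{W}$ of weakly compact operators.

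First I would fix the descriptive framework. Let $\mathcal{U}=C(2^{\nn})$ be the universal separable Banach space and $F(\mathcal{U})$ its Effros Borel space of closed subspaces; by the Kuratowski--Ryll-Nardzewski selection theorem, fix Borel maps $X\mapsto(d^X_n)_n$ choosing for each $X\in F(\mathcal{U})$ a norm-dense sequence in the unit ball $B_X$. Code an operator between separable Banach spaces by a triple $(X,Y,(y_n)_n)\in F(\mathcal{U})\times F(\mathcal{U})\times\mathcal{U}^{\omega}$, subject to the Borel requirement that $d^X_n\mapsto y_n$ extend to a bounded linear map $A\colon X\to Y$; the set $\mathcal{L}$ of such codes is Borel in the ambient product, hence standard Borel, and from a code one recovers both $A$ and the vectors $Ad^X_n=y_n$ in a Borel way. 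Since $A$ is weakly compact exactly when $A\colon X\to\overline{A(X)}$ is, and $\overline{A(X)}$ is separable, one loses nothing by also taking $Y$ separable. The coanalytic set at hand is $\mathscr{W}=\{A\in\mathcal{L}:\mathcal{J}(A)<\infty\}=\{A\in\mathcal{L}:A\text{ is weakly compact}\}$, and by the third clause of the theorem $\mathcal{J}$ maps $\mathscr{W}$ into $\omega_1$.

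Next I would build the trees. By the James-type characterization around which $\mathcal{J}$ is constructed, $A$ fails to be weakly compact precisely when there are $\theta>0$ and $(x_i)_i\subseteq B_X$ with $\mathrm{conv}\{Ax_i:i\leqslant k\}$ and $\mathrm{conv}\{Ax_i:i>k\}$ being $\theta$-separated for every $k$. For $\theta\in\qq^{+}$, let $\mathcal{T}^{\theta}_A$ be the tree of finite sequences $(n_1,\dots,n_k)$ of naturals such that for every $j<k$,
\[
\mathrm{dist}\Bigl(\mathrm{conv}\{Ad^X_{n_i}:i\leqslant j\},\ \mathrm{conv}\{Ad^X_{n_i}:j<i\leqslant k\}\Bigr)\geqslant\theta,
\]
and let $\mathcal{T}_A$ be the amalgam of the trees $\mathcal{T}^\theta_A$ ($\theta\in\qq^+$) over a single root. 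The distance between the convex hulls of two finite, explicitly coded families of vectors of $\mathcal{U}$ is a Borel function of those vectors (it is an infimum over finitely supported rational convex coefficients), so $A\mapsto\mathcal{T}^\theta_A$, and hence $A\mapsto\mathcal{T}_A$, is Borel. A routine perturbation argument---replacing the $x_i$ by sufficiently close members of $(d^X_n)_n$ turns a $\theta$-separation into a $(\theta-\e)$-separation, and $\e$ is absorbed by letting $\theta$ range over the rationals---then shows that $\mathcal{T}_A\in\mathrm{WF}$ exactly when $A$ is weakly compact, and that $\mathcal{J}(A)\leqslant\mathcal{J}(B)\Leftrightarrow o(\mathcal{T}_A)\leqslant o(\mathcal{T}_B)$ for weakly compact $A,B$, using the stability properties of $\mathcal{J}$ proved earlier. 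By the recipe above, $\mathscr{W}$ is coanalytic and $\mathcal{J}$ is a coanalytic rank on it; as a by-product, $\mathcal{J}(A)$ is countable for every weakly compact $A$ with separable domain, re-deriving the third clause of the theorem.

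The step I expect to be the main obstacle is the verification that $A\mapsto\mathcal{T}_A$ is genuinely Borel: one must track the \emph{moving} unit ball $B_X$ through the Borel dense selector $X\mapsto(d^X_n)_n$, confirm that ``$(X,Y,(y_n)_n)$ codes a bounded linear operator'' cuts out a Borel subset of $F(\mathcal{U})\times F(\mathcal{U})\times\mathcal{U}^{\omega}$, and reconcile the precise combinatorial definition of $\mathcal{J}$ with the tree rank $o(\mathcal{T}_A)$ via the perturbation and density arguments above. Once this measurable-selection bookkeeping is settled, the rest is the routine application of the tree-rank recipe recalled at the outset.
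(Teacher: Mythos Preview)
Your proposal is correct and follows essentially the same route as the paper: code operators as triples $(X,Y,\hat A)$ in a standard Borel space via dense selectors, associate to each code the amalgam over a countable family of separation parameters of the convex-separation trees restricted to the selected dense sequence, check that this assignment is Borel node-by-node, and invoke the standard tree-rank recipe. The paper uses the parameters $1/k$ rather than rational $\theta$ and explicitly writes the amalgamated tree as $\{\varnothing\}\cup\{(k):k\in\nn\}\cup\{k\cat(n_i)_{i=1}^p:(d_{n_i}(X))_{i=1}^p\in J(A^*B_{Y^*},k^{-1})\}$, obtaining the exact relation $o(f(X,Y,\hat A))=\mathcal{J}(A)+1$ rather than your order-equivalence, but this is cosmetic. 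One remark: the step you flag as the main obstacle, the Borel-ness of $A\mapsto\mathcal{T}_A$, is in fact the routine part (countably many norm inequalities involving the Borel selectors and the coded images $\hat A(n)$); the point that actually needs the closedness of the trees $J(K,\ee)$ is your ``perturbation argument'' showing that passing to the dense sequence does not lower the order, and the paper handles this by citing the analogous computation for the $\mathbf{NP}_p$ indices.
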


\section{The index}

In this work, all Banach spaces are assumed to be real, while the modifcations for the complex case are straightforward.   By ``subspace,'' we shall mean a closed subspace.  By ``operator,'' we shall mean a bounded, linear operator between Banach spaces.  We let $B_X$ denote the closed unit ball of $X$.  We let $2=\{0,1\}$.  

Given a set $S$, we let $S^{<\nn}$ denote the finite sequences in $S$, including the empty sequence, denoted $\varnothing$.  We let $S^\nn$ denote the infinite sequences in $S$.  Given $t\in S^{<\nn}\cup S^\nn$, we let $|t|$ denote the length of $t$, and for an integer $i$ with $0\leqslant i\leqslant |t|$, we let $t|_i$ denote the initial segment of $t$ having length $i$.  We let $t|^i$ denote the tail of $t$ which remains after removing $t|_i$ from $t$.  Given $s,t\in S^{<\nn}$, we let $s\cat t$ denote the concatenation of $s$ with $t$, listing the members of $s$ first.  

We define the order $\preceq$ on $S^{<\nn}$ by letting $s\preceq t$ if and only if $|s|\leqslant |t|$ and $s=t|_{|s|}$. That is, if and only if $s$ is an initial segment of $t$.  We say a subset $T\subset S^{<\nn}$ is a \emph{tree} provided it is downward closed with respect to the order $\preceq$.  We let $MAX(T)$ denote the maximal members of $T$.  That is, $MAX(T)$ consists of those members of $T$ which do not have a proper extension in $T$.  We define $T'=T\setminus MAX(T)$, and call $T'$ the \emph{derived tree} of $T$.  Note that $T'$ is also a tree.  We then define the transfinite derived trees $T^\xi$ by $T^0=T$, $T^{\xi+1}=(T^\xi)'$, and $T^\xi=\cap_{\zeta<\xi}T^\zeta$ when $\xi$ is a limit ordinal. Note that there exists an ordinal $\xi$ so that $T^\xi=T^{\xi+1}=T^\zeta$ for all $\zeta>\xi$.   We say $T$ is \emph{ill-founded} if there exists an infinite sequence $(x_i)$ in $S$ so that $(x_i)_{i=1}^n\in T$ for all $n\in \nn$, and $T$ is \emph{well-founded} otherwise.  Note that $T$ is well-founded if and only if whenever $T^\xi=T^{\xi+1}$, $T^\xi=\varnothing$.   In the case that $T$ is well-founded, we let $o(T)$ denote the minimum ordinal $\xi$, called the \emph{order} of $T$, such that $T^\xi=T^{\xi+1}=\varnothing$.  If $T$ is ill-founded, we write $o(T)=\infty$.  We establish the convention that $\xi<\infty$ for all ordinals $\xi$.  

Suppose $X$ is a Banach space.  Suppose that $K\subset X^*$ is non-empty, symmetric, convex, and $w^*$ compact (such sets will henceforth be called \emph{bodies}).  Define the seminorm $|\cdot|_K$ on $X$ by $|x|_K=\sup_{x^*\in K}x^*(x)$.  Given two non-empty subsets $S_1$, $S_2$ of $S$, we let $d_K(S_1, S_2)=\inf_{x\in S_1, y\in S_2}|x-y|_K$.  For $\ee>0$, we say a sequence $t\in X^{<\nn}$ is $(K, \ee)$-\emph{cs} (for \emph{convexly separated}) if for any $1\leqslant m< |t|$, $d_K(\text{co}(t|_m), \text{co}(t|^m))\geqslant \ee$.  We consider the empty sequence to be $(K, \ee)$-cs for every $K$ and $\ee$.  We say that an infinite sequence is $(K, \ee)$-cs if all of its initial segments are $(K, \ee)$-cs.  

Note that by the Hahn-Banach theorem, the condition that the sequence $t=(x_i)_{i=1}^{|t|}$ is $(K, \ee)$-cs is equivalent to the following: For every $1\leqslant m< |t|$, there exists $x^*\in K$ so that for each $1\leqslant i\leqslant m<j\leqslant |t|$, $x^*(x_i-x_j)\geqslant \ee$.  Note also that if $A:X\to Y$ is an operator, the condition that $(x_i)_{i=1}^n$ is $(A^*B_{Y^*}, \ee)$-cs is equivalent to the condition that $(Ax_i)_{i=1}^n$ is $(B_{Y^*}, \ee)$-cs.  That is, for any $1\leqslant m<n$ and non-negative scalars $(a_i)_{i=1}^n$ with $1=\sum_{i=1}^m a_i=\sum_{i=m+1}^n a_i$, $\|\sum_{i=1}^m a_i Ax_i- \sum_{i=m+1}^n a_i Ax_i\|\geqslant \ee$.  

Given a Banach space $X$, a body $K\subset X^*$, and $\ee>0$, we let $J(K, \ee)$ denote the tree consisting of all $(K, \ee)$-cs sequences in $B_X^{<\nn}$.  We let $j(K, \ee)$ denote the order of $J(K, \ee)$. We define $\mathcal{J}(K)=\sup_{\ee>0} j(K, \ee)$.  If $A:X\to Y$ is an operator, we let $\mathcal{J}(A)=\mathcal{J}(A^*B_{Y^*})$.  For an ordinal $\xi$, we let $\mathscr{J}_\xi$ be the class of all operators $A$ so that $\mathcal{J}(A)\leqslant \xi$.  The main result of this work is the following. 

\begin{theorem} For every $\xi\in \emph{\ord}$, $\mathscr{J}_{\omega^{\omega^\xi}}$ is a closed operator ideal.  Moreover, $\mathscr{J}_\omega$ is the ideal of all super weakly compact operators and  $\cup_{\xi\in \emph{\ord}}\mathscr{J}_\xi$ is the ideal of weakly compact operators. For all $\zeta\in \emph{\ord}$, there exists $\xi>\zeta$ so that $\mathscr{J}_\zeta\subsetneq \mathscr{J}_\xi$.   

\label{main theorem}
\end{theorem}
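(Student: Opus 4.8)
The argument has four parts. \emph{First, $\bigcup_\xi\mathscr{J}_\xi$ is the ideal of weakly compact operators.} By James's theorem, $A\colon X\to Y$ fails to be weakly compact precisely when some $\ee>0$ admits an infinite $(A^*B_{Y^*},\ee)$-cs sequence in $B_X$, that is, when $J(A^*B_{Y^*},\ee)$ is ill-founded for some $\ee$, that is, when $\mathcal{J}(A)=\infty$; hence $A$ is weakly compact if and only if $\mathcal{J}(A)$ is an ordinal, and the weakly compact operators are a closed ideal. \emph{Second, the routine ideal structure holds for every $\mathscr{J}_\xi$.} If $C\colon W\to X$ is bounded then $|w|_{C^*A^*B_{Y^*}}=|Cw|_{A^*B_{Y^*}}$, so $(w_i)\mapsto(Cw_i/\|C\|)$ is a monotone, length-preserving map of $J((AC)^*B_{Y^*},\ee)$ into $J(A^*B_{Y^*},\ee/\|C\|)$, giving $\mathcal{J}(AC)\le\mathcal{J}(A)$, and the inclusion $(BA)^*B_{Z^*}\subseteq\|B\|A^*B_{Y^*}$ dually gives $\mathcal{J}(BAC)\le\mathcal{J}(A)$; a compact $A$ has $\mathcal{J}(A)\le\omega$ since a $(A^*B_{Y^*},\ee)$-cs sequence has all its images $\ee$-separated inside the totally bounded set $A(B_X)$, so $j(A^*B_{Y^*},\ee)<\omega$ for each $\ee$; and if $\|A-A_n\|\to0$ then every $(A^*B_{Y^*},\ee)$-cs sequence is $(A_n^*B_{Y^*},\ee-2\|A-A_n\|)$-cs, so $j(A^*B_{Y^*},\ee)\le j(A_n^*B_{Y^*},\ee/2)\le\mathcal{J}(A_n)$ for $n$ large, whence $\mathcal{J}(A)=\sup_\ee j(A^*B_{Y^*},\ee)\le\sup_n\mathcal{J}(A_n)$ and $\mathscr{J}_\xi$ is norm-closed. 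Only closure under addition requires the special exponent $\omega^{\omega^\xi}$.

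\emph{The substantive point is closure under addition.} Since $(A+B)^*B_{Y^*}\subseteq A^*B_{Y^*}+B^*B_{Y^*}$ (Minkowski sum), it suffices to bound $\mathcal{J}(K_1+K_2)$ for bodies $K_1,K_2\subseteq X^*$, and the key lemma I would establish is a product estimate $j(K_1+K_2,\ee)\le j(K_1,\ee/2)\cdot j(K_2,\ee/2)$ (either order of the ordinal product works, and the precise constant is unimportant). Granting it, two arithmetic facts finish the job: (a) for a nonempty well-founded tree $T$, $o(T)$ is always a successor ordinal — if $\lambda$ is a limit and $T^\lambda=\varnothing$, then, since every nonempty $T^\zeta$ contains $\varnothing$, one gets $\varnothing\in\bigcap_{\zeta<\lambda}T^\zeta=T^\lambda=\varnothing$, a contradiction; hence $\mathcal{J}(K_i)\le\omega^{\omega^\xi}$, a limit ordinal, forces $j(K_i,\delta)<\omega^{\omega^\xi}$ for all $\delta>0$; and (b) $\omega^{\omega^\xi}$ is multiplicatively indecomposable, so $j(K_1,\ee/2)\cdot j(K_2,\ee/2)<\omega^{\omega^\xi}$. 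Thus $j(K_1+K_2,\ee)<\omega^{\omega^\xi}$ for every $\ee$, and $\mathcal{J}(A+B)\le\mathcal{J}(K_1+K_2)=\sup_\ee j(K_1+K_2,\ee)\le\omega^{\omega^\xi}$. The obstacle is the product estimate itself: a functional $a^*+b^*\in K_1+K_2$ witnessing convex separation at a fixed level $m$ need not split coordinate-wise between $K_1$ and $K_2$, so instead of arguing level by level one must construct a monotone, length-preserving embedding of $J(K_1+K_2,\ee)$ into a tree-product of $J(K_1,\ee/2)$ and $J(K_2,\ee/2)$ by an inductive pruning that records, at each node, which of the two bodies supplies the next increment of separation. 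Carrying this pruning out while controlling the loss in $\ee$ is, I expect, the hardest step of the theorem.

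\emph{Identification of $\mathscr{J}_\omega$.} By the finitary (local) form of James's theorem, $A$ is super weakly compact if and only if for each $\ee>0$ there is a finite bound on the lengths of $(A^*B_{Y^*},\ee)$-cs sequences in $B_X$, that is, if and only if $j(A^*B_{Y^*},\ee)<\omega$ for every $\ee$. If $A$ is super weakly compact this gives $\mathcal{J}(A)\le\omega$; conversely $\mathcal{J}(A)\le\omega$ forces $j(A^*B_{Y^*},\ee)\le\omega$, and by fact (a) this ordinal is a successor, hence $<\omega$, so $A$ is super weakly compact. That $\mathscr{J}_\omega$ is a closed ideal is the case $\xi=0$ of the preceding paragraphs.

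\emph{Distinctness.} Fix $\zeta$; it is enough to produce a weakly compact $A$ with $\mathcal{J}(A)>\zeta$, for then $A\in\mathscr{J}_{\mathcal{J}(A)}\setminus\mathscr{J}_\zeta$ and $\mathscr{J}_\zeta\subsetneq\mathscr{J}_{\mathcal{J}(A)}$. I would take $A=I_X$ with $X=T[\mathcal{S}_\alpha,\theta]$ a reflexive Tsirelson space built over the Schreier family $\mathcal{S}_\alpha$ with $0<\theta<1$ and $\alpha$ large (and a transfinite analogue over a long Schreier family when $\zeta$ is uncountable). Then $X$ is reflexive, so $I_X$ is weakly compact, and the lower Tsirelson estimate $\|\sum_i b_i x_i\|\ge\theta\sum_i|b_i|$, valid for normalized successive blocks $(x_i)$ whose supports form an $\mathcal{S}_\alpha$-admissible sequence, shows that $(n_1,\dots,n_k)\mapsto(e_{n_1},\dots,e_{n_k})$ is a monotone, length-preserving map of the Schreier tree $\mathcal{S}_\alpha$ into $J(B_{X^*},2\theta)$; hence $\mathcal{J}(I_X)\ge j(B_{X^*},2\theta)\ge o(\mathcal{S}_\alpha)$. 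Since $\sup_\alpha o(\mathcal{S}_\alpha)$ is unbounded in the ordinals, choosing $\alpha$ with $o(\mathcal{S}_\alpha)>\zeta$ completes the proof. The nontrivial point here is the lower bound on $\mathcal{J}(I_X)$, which amounts to transporting the Cantor--Bendixson analysis of $\mathcal{S}_\alpha$ into the James tree.
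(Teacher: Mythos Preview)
Your handling of the weakly compact characterization, the ideal properties (i)--(iii), the identification of $\mathscr{J}_\omega$, and the distinctness argument via Tsirelson-type spaces is essentially the paper's approach and is fine.

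The gap is exactly where you locate it: closure under addition. You propose a product inequality $j(K_1+K_2,\ee)\le j(K_1,\ee/2)\cdot j(K_2,\ee/2)$ and a proof by ``inductive pruning that records, at each node, which of the two bodies supplies the next increment of separation.'' The difficulty is that ``which body supplies the separation'' is not data attached to a \emph{node}: for a $(K_1+K_2,\ee)$-cs sequence $(x_i)_{i=1}^n$ you get, for each split point $m$, a functional $a^*_m+b^*_m$, and it is the split $m$---a pair (initial segment, tail)---that carries a colour, not any single $x_i$. Along a single branch this colouring does yield a long monochromatic subsequence (and in fact a \emph{sum} bound, not a product), but there is no evident way to make these branch-by-branch choices cohere across the branching of $J(K_1+K_2,\ee)$ so as to produce a monotone embedding into a product tree. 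Your sketch does not address this coherence problem, and I do not see how to salvage a product estimate from it.

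What the paper does is different in kind. It proves a Ramsey theorem for $2$-colourings of $\Lambda(T)=\{(s,t):s\prec t\}$: if $o(T)\ge\omega^\xi$ then there exist $\xi_0\oplus\xi_1=\xi$ (Hessenberg sum) and monochromatic subtrees of orders $\omega^{\xi_0}$ and $\omega^{\xi_1}$; since the only Hessenberg decompositions of $\omega^\xi$ are trivial, at level $\omega^{\omega^\xi}$ one colour wins outright. This is then applied not to the crude colouring ``$K_1$ or $K_2$,'' but to a discretization of the actual values $y^*_{t,v}(x_s)$ and $z^*_{t,v}(x_s)$ of the component functionals on the tree vectors (with $v$ a maximal extension). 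After two stabilization passes these values are constant up to $\delta$, and then a simple inequality shows one of $K_1,K_2$ alone already separates with constant $\ee/3$. The combinatorial work is substantial (an induction through limit and successor ordinals, tree ``multiplication'' $[\ttt_{\omega^\xi},\ttt_k]$, several auxiliary colouring lemmas), and it does not factor through any product-type inequality. If you want to repair your argument, the honest route is to replace the product estimate by this tree-Ramsey stabilization; the node-by-node pruning idea, as stated, does not get there.
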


\section{A result of James}

Recall that a bounded subset $S$ of $X$ is relatively weakly compact if and only if $\overline{S}^{w^* \text{\ in\ }X^{**}}\subset X$, where $X$ is identified with its image under the canonical embedding into $X^{**}$.  From this we deduce that an operator $A:X\to Y$ fails to be weakly compact precisely when there exists $y^{**}\in \overline{AB_X}^{w^*}\setminus Y$.  The following result is due to James, and it is at the heart of our characterization.

\begin{proposition}\cite{James}  If $A:X\to Y$ is any operator between Banach spaces and if $y^{**}\in \overline{AB_X}^{w^*}\setminus Y$, then for any $0<\ee < \|y^{**}\|_{Y^{**}/Y}$, $B_X$ admits an infinite $(A^*B_{Y^*}, \ee)$-cs sequence.  
\label{james1}
\end{proposition}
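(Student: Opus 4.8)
The plan is to recover a quantitative, sequential version of the classical James argument that a non-weakly-compact operator yields sequences exhibiting convex separation. Fix $y^{**}\in\overline{AB_X}^{w^*}\setminus Y$ and fix $0<\ee<\|y^{**}\|_{Y^{**}/Y}$; write $\delta$ for a number strictly between $\ee$ and $\|y^{**}\|_{Y^{**}/Y}$, so that $d(y^{**},Y)>\delta>\ee$. The idea is to construct, by induction on $n$, a sequence $(x_i)_{i=1}^\infty\subset B_X$ together with an auxiliary sequence of functionals $(y_n^*)\subset B_{Y^*}$ so that the ``blocks-before'' average of the $Ax_i$ and the ``blocks-after'' behave as though separated by $y_n^*$ with gap $\ee$. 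Concretely, I would arrange at stage $n$: a functional $y_n^*\in B_{Y^*}$ with $y_n^*(y^{**})\geqslant\delta$ (possible since $\|y^{**}\|_{Y^{**}/Y}=d(y^{**},Y)\leqslant\|y^{**}\|$ and we may even take $y_n^*$ nearly norming $y^{**}$), and points $x_1,\dots,x_n\in B_X$ with $y_k^*(Ax_i)$ small for $i<k$ and $y_k^*(Ax_i)$ close to $\delta$ for $i\geqslant k$ — the first because each finite partial sum condition is a $w^*$-neighborhood constraint that $y^{**}$ (being in the $w^*$-closure of $AB_X$) lets us satisfy by choosing the next $x_i$ appropriately, the second a consequence of how the $y_k^*$ are chosen relative to $y^{**}$ and the already-chosen finitely many points.

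The technical heart is the alternating selection: one builds $x_n$ and then $y_n^*$ (or vice versa) so that two families of linear constraints are simultaneously met. To get $x_n$: the finitely many conditions ``$|y_k^*(Ax_n)|$ is small'' for $k\le n$ together with ``$y^{**}$ is approximated on the finitely many functionals $y_1^*,\dots,y_{n-1}^*$ and on a fixed Hahn–Banach separating functional'' define a $w^*$-open neighborhood of $y^{**}$ in $Y^{**}$; since $y^{**}\in\overline{AB_X}^{w^*}$, this neighborhood meets $AB_X$, giving a legal $x_n\in B_X$. To get $y_n^*$: one needs a norm-one functional that nearly norms $y^{**}$ modulo $Y$ — i.e. separates $y^{**}$ from the (finite-dimensional-mod-$Y$, hence the relevant finite-codimensional) span issues — while being ``neutral'' on the finitely many already-chosen $Ax_i$; this is where one uses that $d(y^{**},Y)>\delta$, invoking Hahn–Banach to produce $y_n^*\in B_{Y^*}$ with $y_n^*|_Y$ small in an appropriate localized sense and $y_n^*(y^{**})\ge\delta$. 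Bookkeeping the "small" parameters so they sum to something below $\delta-\ee$ makes the convex-separation estimate go through: for any split $1\le m<N$ and convex coefficients $(a_i)$ with $\sum_{i\le m}a_i=\sum_{i>m}a_i=1$, evaluating $y_{m+1}^*$ on $\sum_{i\le m}a_iAx_i-\sum_{i>m}a_iAx_i$ yields a quantity $\ge\delta-(\text{small})\ge\ee$, which by the remark following the definition is exactly the statement that $(x_i)$ is $(A^*B_{Y^*},\ee)$-cs.

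I expect the main obstacle to be the precise form of the second selection step — producing the functionals $y_n^*$ with the correct simultaneous ``norms $y^{**}$ well'' and ``is negligible on finitely many prescribed vectors'' properties. In James's original setting this is handled via a clever iteration using the fact that $y^{**}\notin Y$ to keep gaining separation; transcribing it so that a single fixed $\ee$ works for all initial segments (rather than a sequence of shrinking gaps) requires choosing $\delta$ once at the start, fixing a functional realizing $y^{**}(\,\cdot\,)\approx d(y^{**},Y)$ on $Y^{**}/Y$, and then only perturbing within the kernel-type constraints. Once the construction is set up with the error budget fixed in advance, the verification that every initial segment is $(A^*B_{Y^*},\ee)$-cs is a routine convexity computation using the characterization recorded in the paragraph preceding Proposition~\ref{james1}. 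I would also note at the end that this gives the infinite sequence directly, so no compactness/diagonalization beyond the inductive construction is needed.
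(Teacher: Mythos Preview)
The paper gives no proof of this proposition; it is quoted as a result of James with a citation and then used as a black box. Your outline is essentially James's original argument, so there is no alternative approach in the paper to compare against.

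Two points in your sketch are muddled and worth tightening before this would count as a proof. First, the description of the $x_n$-step is internally inconsistent: you first say you want $y_k^*(Ax_i)$ close to $\delta$ for $i\geqslant k$, but then in the ``To get $x_n$'' paragraph you write ``$|y_k^*(Ax_n)|$ is small for $k\leqslant n$'', which is the opposite condition. The correct requirement is that $Ax_n$ approximate $y^{**}$ on the finitely many functionals $y_1^*,\dots,y_n^*$ (a $w^*$-neighborhood condition, satisfiable precisely because $y^{**}\in\overline{AB_X}^{w^*}$), which forces $y_k^*(Ax_n)\approx y^{**}(y_k^*)>\delta$ for $k\leqslant n$. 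Second, producing $y_n^*\in B_{Y^*}$ with $y^{**}(y_n^*)>\delta$ and $|y_n^*(Ax_i)|$ small for $i<n$ is not a direct Hahn--Banach application inside $Y^*$; your phrase ``$y_n^*|_Y$ small'' is meaningless since $y_n^*$ already lives on $Y$. The standard route is: Hahn--Banach in $Y^{**}$ gives $\phi\in B_{Y^{***}}$ with $\phi|_Y=0$ and $\phi(y^{**})>\delta$, and then Goldstine's theorem ($w^*$-density of $B_{Y^*}$ in $B_{Y^{***}}$) lets you approximate $\phi$ on the finite subset $\{Ax_1,\dots,Ax_{n-1},y^{**}\}$ of $Y^{**}$ by some $y_n^*\in B_{Y^*}$. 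With these two corrections the construction and the final convexity estimate go through as you indicate; the sign discrepancy in your last displayed inequality is harmless since $A^*B_{Y^*}$ is symmetric.
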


Of course, it is clear that if $(x_i)\subset B_X$ is $(A^*B_{Y^*}, \ee)$-cs, no convex block of $(Ax_i)$ can be norm convergent, which means no subsequence of $(Ax_i)$ can be weakly convergent.  Thus $A:X\to Y$ fails to be weakly compact precisely when there exists $\ee>0$ so that $J( A^*B_{Y^*},\ee)$ is ill-founded, when happens if and only if there exists $\ee>0$ so that $j( A^*B_{Y^*}, \ee)=\infty$.  Thus we obtain the following portion of Theorem \ref{main theorem}.  

\begin{corollary} An operator $A:X\to Y$ is weakly compact if and only if $\mathcal{J}(A)<\infty$.  Therefore $\cup_{\xi\in \emph{\ord}}\mathscr{J}_\xi$ is the ideal of weakly compact operators.  

\end{corollary}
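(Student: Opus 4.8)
The plan is to prove the corollary by combining Proposition~\ref{james1} with the elementary observation preceding the statement. First I would recall the standard fact that a bounded set $S\subset X$ is relatively weakly compact if and only if $\overline{S}^{w^*}\subset X$ inside $X^{**}$; applied to $S = AB_X$, this says that $A$ fails to be weakly compact precisely when there is some $y^{**}\in\overline{AB_X}^{w^*}\setminus Y$. So it suffices to show that $\mathcal{J}(A)=\infty$ if and only if such a $y^{**}$ exists.

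For the forward direction (``$A$ not weakly compact $\Rightarrow \mathcal{J}(A)=\infty$''), I would fix $y^{**}\in\overline{AB_X}^{w^*}\setminus Y$ and set $r=\|y^{**}\|_{Y^{**}/Y}>0$. Choosing any $0<\ee<r$, Proposition~\ref{james1} produces an infinite $(A^*B_{Y^*},\ee)$-cs sequence in $B_X$. By definition of $J(A^*B_{Y^*},\ee)$ as the tree of all $(K,\ee)$-cs sequences in $B_X^{<\nn}$, every initial segment of this infinite sequence lies in the tree, so the tree is ill-founded, whence $j(A^*B_{Y^*},\ee)=\infty$ and therefore $\mathcal{J}(A)=\sup_{\ee>0}j(A^*B_{Y^*},\ee)=\infty$ (using the convention $\xi<\infty$ for all ordinals $\xi$).

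For the reverse direction (``$\mathcal{J}(A)=\infty\Rightarrow A$ not weakly compact''), suppose $\mathcal{J}(A)=\infty$. Since $\mathcal{J}(A)=\sup_{\ee>0}j(A^*B_{Y^*},\ee)$ and the supremum of a set of ordinals is $\infty$ only if some term equals $\infty$ — here one should note $j$ is monotone decreasing in $\ee$, so in fact $\mathcal{J}(A)=\sup_{n}j(A^*B_{Y^*},1/n)$, and an ordinal-valued sup over countably many terms that equals $\infty$ forces one term to be $\infty$ — there is some $\ee>0$ with $j(A^*B_{Y^*},\ee)=\infty$, i.e.\ $J(A^*B_{Y^*},\ee)$ is ill-founded. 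An infinite branch of this tree is an infinite $(A^*B_{Y^*},\ee)$-cs sequence $(x_i)\subset B_X$, equivalently $(Ax_i)$ is $(B_{Y^*},\ee)$-cs. Then no convex block of $(Ax_i)$ is norm-Cauchy: for $m<n$ and convex coefficients the two convex combinations are at distance $\geqslant\ee$, so in particular no subsequence of $(Ax_i)$ converges weakly (a weakly convergent sequence has convex blocks converging in norm by Mazur's theorem). Hence $\{Ax_i\}\subset AB_X$ has no weakly convergent subsequence, so $AB_X$ is not relatively weakly compact and $A$ is not weakly compact. The final sentence of the corollary is then immediate: $\cup_{\xi\in\ord}\mathscr{J}_\xi = \{A: \mathcal{J}(A)<\infty\}$, which we have just identified with the class of weakly compact operators, and this class is a (closed) operator ideal by classical theory.

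The main obstacle — really the only non-bookkeeping point — is the passage ``$\sup_{\ee>0}j(K,\ee)=\infty$ implies some $j(K,\ee)=\infty$,'' since a supremum of ordinals can exceed each term; the resolution is the monotonicity $j(K,\ee_1)\geqslant j(K,\ee_2)$ for $\ee_1\leqslant\ee_2$, which lets one replace the sup over all $\ee>0$ by the sup over $\ee=1/n$, and a countable increasing sequence of ordinals reaches $\infty$ only if it is eventually $\infty$. Everything else is a direct unwinding of the definition of the tree $J(K,\ee)$ and the cited James proposition, together with the textbook characterization of relative weak compactness via $w^*$-closures in the bidual.
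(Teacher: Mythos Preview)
Your proof is correct and follows essentially the same route as the paper: James's proposition for one direction, and the observation that an infinite $(A^*B_{Y^*},\ee)$-cs sequence prevents weak convergence of subsequences (via Mazur) for the other. One small remark: the ``obstacle'' you flag is not really there --- $\infty$ is not an ordinal, and the supremum of any \emph{set} of ordinals is again an ordinal, so $\sup_{\ee>0} j(K,\ee)=\infty$ already forces some $j(K,\ee)=\infty$ without needing the monotonicity-plus-countable-cofinality detour.
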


For the next result, we recall that an operator $A:X\to Y$ is called \emph{super weakly compact} if whenever $\uuu$ is an ultrafilter, the induced operator $A_\uuu:X_\uuu\to Y_\uuu$ between the ultrapowers is weakly compact.  

We note the following, which follows from standard ultrafilter techniques:  If $0<\delta<\ee$, if $A:X\to Y$ is an operator, and if $\uuu$ is any free ultrafilter on $\nn$, $$j(A^*B_{Y^*}, \ee)>\omega \Rightarrow j(A^*_\uuu B_{Y^*_\uuu}, \ee) =\infty \Rightarrow j(A^*B_{Y^*}, \delta)>\omega.$$ To see this, note that $j(A^*B_{Y^*}, \ee)>\omega$ implies the existence of $(x^n_i)_{i=1}^n\in B_X^{<\nn}$ so that for all $n\in \nn$ and $1\leqslant m<n$, the convex hulls of $(Ax_i^n)_{i=1}^m$ and $(Ax_i^n)_{i=m+1}^n$ have norm distance at least $\ee$ from each other.  For each $n\in \nn$ and $i>n$, let $x^n_i=0$.  Then the equivalence class $\chi_i\in B_{X_\uuu}$ containing $(x^n_i)_n$ is such that the convex hulls of $(A_\uuu \chi_i)_{i=1}^m$ and $(A_\uuu \chi_i)_{i=m+1}^\infty$ have norm distance at least $\ee$ from each other for all $m\in \nn$.  Conversely, if $(\chi_i)\in B_{X_\uuu}^\nn$ has the latter property, then we can find for all $n\in \nn$ some sequence $(x_i^n)_{i=1}^n\in B_X^{<\nn}$ having the former convex separation property with $\ee$ replaced by $\delta$ (recall that $0<\delta<\ee$ were fixed constants). This means that $j(A^*B_{Y^*}, \delta)\geqslant \omega$.  Since $J(A^*B_{Y^*}, \delta)$ includes the empty sequence, $j(A^*B_{Y^*}, \delta)$ cannot be a limit ordinal, so $j(A^*B_{Y^*}, \delta)>\omega$.  These observations immediately yield the next portion of Theorem \ref{main theorem}.

\begin{corollary} Let $A:X\to Y$ be an operator.  Then $\mathcal{J}(A)\leqslant \omega$ if and only if $A$ is super weakly compact.  
\label{characterize}

\end{corollary}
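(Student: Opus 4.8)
The plan is to deduce the equivalence from the two implications displayed before the statement together with the preceding corollary, which says that an operator $B$ is weakly compact if and only if $\mathcal{J}(B)<\infty$. First I would record the elementary fact, already used above, that because the empty sequence lies in every tree $J(K,\ee)$ that tree is nonempty and $j(K,\ee)$ is never a limit ordinal; in particular $j(K,\ee)\geqslant\omega$ forces $j(K,\ee)>\omega$. Note also that $\mathcal{J}(A)\leqslant\omega$ means exactly that $j(A^*B_{Y^*},\ee)\leqslant\omega$ for every $\ee>0$.

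For the forward direction, suppose $A$ is super weakly compact but, for contradiction, $\mathcal{J}(A)>\omega$, so $j(A^*B_{Y^*},\ee)>\omega$ for some $\ee>0$. Fixing any free ultrafilter $\uuu$ on $\nn$, the first displayed implication yields $j(A^*_\uuu B_{Y^*_\uuu},\ee)=\infty$, hence $\mathcal{J}(A_\uuu)=\infty$, hence $A_\uuu$ is not weakly compact by the preceding corollary; this contradicts super weak compactness of $A$ at the ultrafilter $\uuu$. Thus super weak compactness implies $\mathcal{J}(A)\leqslant\omega$.

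For the converse, assume $\mathcal{J}(A)\leqslant\omega$. Here one must show $A_\uuu$ is weakly compact for \emph{every} ultrafilter $\uuu$, not only for free ones on $\nn$, so I would argue contrapositively. Suppose $A_\uuu$ fails to be weakly compact for some ultrafilter $\uuu$ on an index set $I$. By the preceding corollary there is $\ee>0$ and an infinite $(A^*_\uuu B_{Y^*_\uuu},\ee)$-cs sequence $(\chi_i)$ in $B_{X_\uuu}$; choose representatives $\chi_i=(x_i^\alpha)_{\alpha\in I}$ with $\|x_i^\alpha\|\leqslant1$. Fix $\delta\in(0,\ee)$ and $n\in\nn$. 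Since the norm of an element of $Y_\uuu$ is the $\uuu$-limit of the norms of its representatives, for each fixed choice of convex coefficients the separation inequality at every split point $1\leqslant m<n$ holds on a $\uuu$-large set of coordinates $\alpha$; and because each of the finitely many probability simplices attached to the split points is compact while the convex combinations of the vectors $Ax_i^\alpha$ depend on the coefficients in a $\|A\|$-Lipschitz manner uniformly in $\alpha$, a finite-net argument produces a single $\uuu$-large (in particular nonempty) set of $\alpha$ for which $(x_i^\alpha)_{i=1}^n$ is $(A^*B_{Y^*},\delta)$-cs. Hence $J(A^*B_{Y^*},\delta)$ contains sequences of every finite length, so $j(A^*B_{Y^*},\delta)\geqslant\omega$, and therefore $j(A^*B_{Y^*},\delta)>\omega$ by the first paragraph, so $\mathcal{J}(A)>\omega$, a contradiction. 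This is just the ``conversely'' half of the argument sketched before the statement; the point making it work for arbitrary $\uuu$ is that it uses only the countably many $\chi_i$ and so is insensitive to the size of $I$.

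The one genuinely delicate step is this last descent from $B_{X_\uuu}$ to $B_X$ for an arbitrary index set: passing from ``separated for each fixed convex combination, on a large set of coordinates'' to ``simultaneously separated for all convex combinations, on a large set of coordinates'' is exactly where compactness of the simplices and the uniform Lipschitz dependence on the coefficients are needed; everything else is bookkeeping with the two displayed implications and the $\mathcal{J}$-characterization of weak compactness. Alternatively, if one is content to invoke the standard fact that super weak compactness of an operator is already detected by a single free ultrafilter on $\nn$, the converse reduces immediately to the contrapositive of the second displayed implication.
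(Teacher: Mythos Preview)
Your argument is correct and follows essentially the same route as the paper: both directions are read off from the two displayed implications preceding the corollary together with the $\mathcal{J}$-characterization of weak compactness, and the observation that $j(K,\ee)$ is never a limit ordinal. Your extra care in the converse direction---handling an arbitrary ultrafilter via the finite-net/Lipschitz descent rather than tacitly restricting to free ultrafilters on $\nn$---is a point the paper subsumes under ``standard ultrafilter techniques,'' but it is not a different approach, just a more explicit version of the same one.
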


\begin{proposition} Let $X,Y$ be Banach spaces, $\xi\in \emph{\ord}$, and $A:X\to Y$ be an operator.  \begin{enumerate}[(i)]\item For any Banach space $W$ and any operator $B:W\to X$, $\mathcal{J}(AB)\leqslant \mathcal{J}(A)$.  \item For any Banach space and any operator $B:Y\to W$, $\mathcal{J}(BA)\leqslant \mathcal{J}(A)$. \item $\mathcal{J}_\xi(X,Y)$ is norm closed in $\mathfrak{L}(X,Y)$, the space of operators from $X$ to $Y$ endowed with operator norm. \end{enumerate}
\label{mostly ideal}
\end{proposition}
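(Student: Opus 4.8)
The three assertions all reduce to tracking how the trees $J(K,\ee)$ behave under natural operations, so the plan is to set up a single monotonicity principle and then apply it three times. The key observation is: if $K_1\subset K_2$ are bodies in $X^*$, then every $(K_1,\ee)$-cs sequence is $(K_2,\ee)$-cs, since $|x|_{K_1}\leqslant|x|_{K_2}$ forces $d_{K_1}(S_1,S_2)\leqslant d_{K_2}(S_1,S_2)$; hence $J(K_1,\ee)\subset J(K_2,\ee)$ as trees, and since the order $o(\cdot)$ of a tree is monotone under inclusion (an easy transfinite induction on the derived trees, using that $T\subset T'$ implies $T^\xi\subset (T')^\xi$), we get $j(K_1,\ee)\leqslant j(K_2,\ee)$ for every $\ee$, and therefore $\mathcal{J}(K_1)\leqslant\mathcal{J}(K_2)$ after taking suprema over $\ee>0$.

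For (i), with $B:W\to X$, I would first reduce to the case $\|B\|\leqslant 1$: if $B=0$ there is nothing to prove, and otherwise replacing $B$ by $B/\|B\|$ only rescales the convex-separation constant, i.e. $j((AB)^*B_{Y^*},\ee)=j((A(B/\|B\|))^*B_{Y^*},\ee/\|B\|)$, which does not affect the supremum $\mathcal{J}(\cdot)$. So assume $B(B_W)\subset B_X$. Then for any $(AB)^*B_{Y^*}$-cs sequence $(w_i)$ in $B_W^{<\nn}$, applying the "note" in the excerpt that $(w_i)$ is $((AB)^*B_{Y^*},\ee)$-cs iff $(ABw_i)$ is $(B_{Y^*},\ee)$-cs, we see $(Bw_i)$ is a sequence in $B_X^{<\nn}$ that is $(A^*B_{Y^*},\ee)$-cs. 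The map $(w_i)\mapsto(Bw_i)$ is then a monotone (order-preserving, length-preserving) map $J((AB)^*B_{Y^*},\ee)\to J(A^*B_{Y^*},\ee)$, and such a map cannot increase the order of a well-founded tree — another routine transfinite induction — giving $j((AB)^*B_{Y^*},\ee)\leqslant j(A^*B_{Y^*},\ee)$ and hence the claim. For (ii), with $B:Y\to W$, the slick route is to use that $(x_i)$ is $(A^*B_{Y^*},\ee)$-cs iff $(Ax_i)$ is $(B_{Y^*},\ee)$-cs; after the harmless rescaling to $\|B\|\leqslant 1$, if $(x_i)\subset B_X^{<\nn}$ is $((BA)^*B_{W^*},\ee)$-cs, i.e. $(BAx_i)$ is $(B_{W^*},\ee)$-cs, then for any $m$ the separating functional $w^*\in B_{W^*}$ pulls back to $B^*w^*\in B_{Y^*}$ witnessing the $\ee$-separation of $\text{co}(Ax_i)_{i\le m}$ from $\text{co}(Ax_i)_{i>m}$, so $(x_i)$ is $(A^*B_{Y^*},\ee)$-cs; thus $J((BA)^*B_{W^*},\ee)\subset J(A^*B_{Y^*},\ee)$ directly and we conclude as before. (Equivalently, $(BA)^*B_{W^*}=A^*B^*B_{W^*}\subset A^*B_{Y^*}$ when $\|B\|\le1$, and then (ii) is an instance of the monotonicity principle.)

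For (iii), fix bounded operators $A_n\to A$ in norm with $A_n\in\mathcal{J}_\xi(X,Y)$, and suppose for contradiction that $\mathcal{J}(A)>\xi$; then there is $\ee>0$ with $j(A^*B_{Y^*},\ee)>\xi$. Pick $n$ with $\|A-A_n\|<\ee/3$. The plan is to show $j(A_n^*B_{Y^*},\ee/3)\ge j(A^*B_{Y^*},\ee)>\xi$, contradicting $\mathcal{J}(A_n)\le\xi$. The point is that if $(x_i)_{i=1}^k\subset B_X^{<\nn}$ is $(A^*B_{Y^*},\ee)$-cs, then for $1\le m<k$ and convex coefficients we have $\|\sum_{i\le m}a_iA_nx_i-\sum_{i>m}a_iA_nx_i\|\ge\|\sum_{i\le m}a_iAx_i-\sum_{i>m}a_iAx_i\|-2\|A-A_n\|>\ee-2\ee/3=\ee/3$, using $\sum_{i\le m}a_i=\sum_{i>m}a_i=1$ so the "error" terms have total coefficient $2\|A-A_n\|$. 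Hence $J(A^*B_{Y^*},\ee)\subset J(A_n^*B_{Y^*},\ee/3)$, so their orders compare, and taking the supremum over $\ee$ gives the contradiction; therefore $\mathcal{J}(A)\le\xi$, i.e. $A\in\mathcal{J}_\xi(X,Y)$. The main obstacle — really the only non-bookkeeping point — is the uniform-in-$m$-and-in-coefficients norm estimate in (iii), which is exactly the elementary triangle-inequality computation just indicated; everything else is the monotonicity-of-ordinal-index lemma applied repeatedly.
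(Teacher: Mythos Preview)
Your proof is correct and follows essentially the same route as the paper: the reduction to $\|B\|\leqslant 1$ and the monotone map $(w_i)\mapsto(Bw_i)$ for (i), the containment $(BA)^*B_{W^*}\subset A^*B_{Y^*}$ for (ii), and the triangle-inequality estimate $\|\sum_{i\leqslant m}a_iA_nx_i-\sum_{i>m}a_iA_nx_i\|\geqslant\ee-2\|A-A_n\|$ giving $J(A^*B_{Y^*},\ee)\subset J(A_n^*B_{Y^*},\ee/3)$ for (iii). The only cosmetic difference is that in (iii) the paper shows the complement of $\mathscr{J}_\xi(X,Y)$ is open (an $\ee/3$-ball around a non-member consists of non-members), whereas you phrase the same computation as a sequential contradiction; the underlying inequality is identical.
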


\begin{proof}[Proof of Proposition \ref{mostly ideal}] Both $(i)$ and $(ii)$ are trivial in the case that $B$ is the zero operator, so we assume $B$ is not.  It is clear that we may assume $\|B\|=1$, since $j((cAB)^*B_{Y^*}, c \ee)=j((AB)^*B_{Y^*}, \ee)$ (resp. $j((cBA)^*B_{W^*}, c \ee)=j((BA)^*B_{W^*}, \ee)$) for each $c>0$.  

$(i)$ Given a finite sequence $t=(w_i)_{i=1}^n\in B_W^{<\nn}$, we let $B(t)=(Bw_i)_{i=1}^n\in B_X^{<\nn}$.  We let $B(\varnothing)=\varnothing$ and given a subset $T$ of $B_W^{<\nn}$, we let $B(T)=\{B(t): t\in T\}$.  Then one easily verifies that $B(J((AB)^*B_{Y^*}, \ee))\subset J(A^*B_{Y^*}, \ee)$, whence it follows that $j((AB)^*B_{Y^*}, \ee)=o(B(J((AB)^*B_{Y^*}, \ee)))\leqslant j(A^*B_{Y^*}, \ee)$.  Taking the supremum over all $\ee>0$ gives $(i)$.

$(ii)$ Since $\|B\|=1$, $(BA)^*B_{W^*}\subset A^*B_{Y^*}$, from which $J((BA)^*B_{W^*}, \ee)\subset J(A^*B_{Y^*}, \ee)$ and $j((BA)^*B_{W^*}, \ee)\leqslant j(A^*B_{Y^*}, \ee)$ and $\mathcal{J}(AB)\leqslant \mathcal{J}(A)$ easily follow.

$(iii)$ We will show that if $A\in \mathfrak{L}(X,Y)\setminus \mathscr{J}_\xi(X,Y)$, then some open ball around $A$ lies in $\mathfrak{L}(X,Y)\setminus \mathscr{J}_\xi(X,Y)$.  If $A\in \mathfrak{L}(X,Y)\setminus \mathscr{J}_\xi(X,Y)$, then $j(A^*B_{Y^*}, \ee)>\xi$ for some $\ee>0$.  Fix $\delta_1, \delta_2>0$ such that $2\delta_1+\delta_2=\ee$.   We will show that if $\|A-B\|<\delta_1$, $j(B^*B_{Y^*}, \delta_2)>\xi$, and therefore $B\in \mathfrak{L}(X,Y)\setminus \mathscr{J}_\xi(X,Y)$.  To that end, it suffices to show that $J(A^*B_{Y^*}, \ee)\subset J(B^*B_{Y^*}, \delta_2)$ if $\|A-B\|<\delta_1$.  Fix $(x_i)_{i=1}^n\in J(A^*B_{Y^*}, \ee)$.  For $1\leqslant m<n$ and non-negative scalars $(a_i)_{i=1}^n$ so that $1=\sum_{i=1}^m a_i=\sum_{i=m+1}^n a_i$, \begin{align*} \|\sum_{i=1}^m a_iBx_i - \sum_{i=m+1}^n a_iBx_i\| & \geqslant \|\sum_{i=1}^m a_i A x_i - \sum_{i=m+1}^n a_iAx_i\| \\ & - \sum_{i=1}^m a_i\|A-B\| - \sum_{i=m+1}^n a_i \|A-B\| \\ & \geqslant \ee - 2\delta_1 =\delta_2.\end{align*}

\end{proof}

In order to complete the proof that $\mathscr{J}_{\omega^{\omega^\xi}}$ is an ideal, we need only to show that it is closed under finite sums.  For this, we need one more lemma, the proof of which will comprise the final section of this work.

\begin{lemma} Let $X, Y$ be Banach spaces and $A,B:X\to Y$ operators.  Then if $\mathcal{J}(A+B, \ee)>\omega^{\omega^\xi}$, then either $\mathcal{J}(A, \ee/3)>\omega^{\omega^\xi}$ or $\mathcal{J}(B, \ee/3)>\omega^{\omega^\xi}$. \label{sum lemma} \end{lemma}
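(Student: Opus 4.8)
\medskip

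\noindent\textbf{Proof strategy.}

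The plan is to argue by contraposition and reduce the lemma to a single ordinal estimate. Observe first that every tree $J(K,\delta)$ contains the empty sequence and is downward closed, so each nonempty transfinite derived tree $J(K,\delta)^\zeta$ again contains the empty sequence; hence $J(K,\delta)^\lambda\neq\varnothing$ for every limit $\lambda$ strictly below the order, and therefore $o(J(K,\delta))$ is always a successor ordinal. Writing $j(C^*B_{Y^*},\delta)=o(J(C^*B_{Y^*},\delta))$, the hypotheses of the contrapositive, $j(A^*B_{Y^*},\ee/3)\leqslant\omega^{\omega^\xi}$ and $j(B^*B_{Y^*},\ee/3)\leqslant\omega^{\omega^\xi}$, are thus equivalent to $\gamma:=j(A^*B_{Y^*},\ee/3)<\omega^{\omega^\xi}$ and $\delta:=j(B^*B_{Y^*},\ee/3)<\omega^{\omega^\xi}$. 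It is standard (and immediate from Cantor normal form, using that $\omega^\xi$ is additively indecomposable) that $\omega^{\omega^\xi}$ is multiplicatively indecomposable, so $\gamma\cdot\delta<\omega^{\omega^\xi}$. Hence it suffices to prove
\[
j\big((A+B)^*B_{Y^*},\ee\big)\ \leqslant\ j\big(A^*B_{Y^*},\tfrac{\ee}{3}\big)\cdot j\big(B^*B_{Y^*},\tfrac{\ee}{3}\big).
\]

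To establish this estimate I would exhibit a strictly $\prec$-monotone map $\Theta$ (i.e.\ $s\prec t\Rightarrow\Theta(s)\prec\Theta(t)$), defined on all of $J((A+B)^*B_{Y^*},\ee)$, into a well-founded tree $\mathcal{U}$ whose order is $<\omega^{\omega^\xi}$ (concretely, at most $\gamma\cdot\delta$); then $o(J((A+B)^*B_{Y^*},\ee))\leqslant o(\mathcal{U})$ by the usual observation that a strictly $\prec$-monotone map carries the $\zeta$-th derived tree into the $\zeta$-th derived tree, and so does not increase order. For $\mathcal{U}$ I would take the composition obtained by grafting a fresh copy of $J(A^*B_{Y^*},\ee/3)$ onto $J(B^*B_{Y^*},\ee/3)$ above each vertex, so that a branch of $\mathcal{U}$ is a branch of $J(B^*B_{Y^*},\ee/3)$ with a branch of $J(A^*B_{Y^*},\ee/3)$ grafted on; such a tree has order at most $\gamma\cdot\delta$. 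The map $\Theta$ should encode a decomposition of a cs-sequence $t=(x_1,\dots,x_n)$ for $A+B$ into consecutive blocks, alternating between blocks on which (a suitable Hahn--Banach witness for) $B$ provides $(\ee/3)$-convex separation of prefix from suffix and blocks on which $A$ does, recording the completed $B$-blocks as a branch of $J(B^*B_{Y^*},\ee/3)$ and the current trailing $A$-block as a branch of the appropriate copy of $J(A^*B_{Y^*},\ee/3)$. Monotonicity then amounts to: adjoining one more term $x_{n+1}$ either prolongs the current $A$-block (moving up inside the top copy of $J(A^*B_{Y^*},\ee/3)$) or closes it and begins a new $B$-step (moving up inside $J(B^*B_{Y^*},\ee/3)$).

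The main obstacle --- and what forces this last section to be delicate --- is that a single split of $t$ cannot, in general, be attributed to $A$ or to $B$ individually: there is no reason for $d_{A^*B_{Y^*}}(\text{co}(t|_m),\text{co}(t|^m))$ or $d_{B^*B_{Y^*}}(\text{co}(t|_m),\text{co}(t|^m))$ to be bounded below, since the $\ell_\infty$-sum phenomenon shows these two distances can both vanish even while $d_{(A+B)^*B_{Y^*}}(\text{co}(t|_m),\text{co}(t|^m))\geqslant\ee$ --- the infima defining them are attained at unrelated convex combinations, and even their sum need not be bounded below. So the block decomposition cannot be read off split by split; it must instead be produced inductively, mimicking James' proof that weakly compact operators form a closed ideal. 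Concretely, one follows a branch of $J((A+B)^*B_{Y^*},\ee)$ and argues that once the $A$-index has been ``exhausted'' along a prefix --- in the precise sense that the relevant copy of $J(A^*B_{Y^*},\ee/3)$ in $\mathcal{U}$ has run out, which bounds the height of each $A$-block by $\gamma$ --- any further cs-extension must be realized, to within $\ee/3$, by the $B$-part alone, consuming one more $B$-step along a branch of $J(B^*B_{Y^*},\ee/3)$; interleaving $\gamma$-bounded $A$-blocks along this $\delta$-bounded $B$-branch is what produces the $\gamma\cdot\delta$ bound. Replacing James' ``pass to a $w^*$-convergent subnet of $(Ax_\lambda)$'' by this finitary, order-theoretic bookkeeping, and checking that the resulting $\Theta$ is well-defined and $\prec$-monotone, is the heart of the argument. (The constant $3$ is the perturbation budget: writing $\ee=\ee/3+\ee/3+\ee/3$, one spends $\ee/3$ approximating the $A$-part and $\ee/3$ approximating the $B$-part, leaving $\ee/3$ of genuine convex separation to carry to the next step --- exactly as in the perturbation computation in the proof of Proposition~\ref{mostly ideal}(iii).)

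An alternative organization is induction on $\xi$. The case $\xi=0$, where $\omega^{\omega^0}=\omega$, is essentially the assertion that $\mathscr{J}_\omega$ is closed under addition, which follows from the ultrafilter discussion preceding Corollary~\ref{characterize} together with the classical fact that weakly compact operators form an ideal; the limit case follows at once from $\omega^{\omega^\xi}=\sup_{\eta<\xi}\omega^{\omega^\eta}$ and a pigeonhole on a cofinal subset of $\xi$; and the successor step passes from $\xi$ to $\xi+1$ via $\omega^{\omega^{\xi+1}}=\sup_n(\omega^{\omega^\xi})^n$. This route concentrates all the difficulty in the successor step, where one again meets the same $A$/$B$ interference, so it relocates rather than removes the obstacle above.
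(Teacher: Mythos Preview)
Your proposal is a strategy, not a proof, and the step you yourself flag as ``the heart of the argument'' is a genuine gap rather than routine bookkeeping. The block decomposition you sketch presupposes that from an $((A+B)^*B_{Y^*},\ee)$-cs sequence $(x_1,\dots,x_n)$ one can extract, in a canonical and extension-compatible way, consecutive blocks each of which is $(A^*B_{Y^*},\ee/3)$-cs, together with a $(B^*B_{Y^*},\ee/3)$-cs sequence recording the transitions between blocks. But no such extraction is available: the $(A+B)$-cs hypothesis gives, for each split $m$, a functional $y^*_m\in B_{Y^*}$ with $y^*_m((A+B)(x_i-x_j))\geqslant\ee$ for all $i\leqslant m<j$, and for \emph{each fixed pair} $(i,j)$ one of $y^*_m(A(x_i-x_j))$, $y^*_m(B(x_i-x_j))$ is $\geqslant\ee/2$; but which one can vary with $(i,j)$ even for a single $m$, and the functionals $y^*_m$ for different $m$ are unrelated. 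So no consecutive block of the sequence need be $(A,\ee/3)$-cs at all, and your assertion that ``once the $A$-index has been exhausted along a prefix, any further cs-extension must be realized by the $B$-part alone'' has no content: exhausting $A$-separation for some earlier split says nothing about the $A$-contribution at the current split. James' infinite argument uses $w^*$-limits of the witnessing functionals to force uniformity; there is no finitary substitute of the kind you describe, and this is precisely why the lemma requires nontrivial machinery rather than a direct monotone embedding into a product tree. (In particular, the product inequality you aim for is not established anywhere, and nothing in your outline proves it.)

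The paper's proof proceeds along entirely different lines. It first develops a Ramsey theorem for $2$-colorings of comparable pairs $\Lambda(T)=\{(s,t):s\prec t\}$ in a well-founded tree: if $o(T)\geqslant\omega^{\omega^\xi}$ and $f:\Lambda(T)\to S$ is any finite coloring, there is a monochromatic monotone image of a tree of order $\omega^{\omega^\xi}$ (Lemma~\ref{combinatorial1} and Corollary~\ref{main corollary}). This is then upgraded to colorings of triples $(s,t,v)$ with $s\prec t\preceq v\in MAX(T)$ (Corollary~\ref{cut to the chase}). Given a tree of $((K+L),\ee)$-cs sequences with $K=A^*B_{Y^*}$, $L=B^*B_{Y^*}$, one writes each witnessing functional as $x^*_{t,v}=y^*_{t,v}+z^*_{t,v}$ with $y^*\in K$, $z^*\in L$, discretizes the values $y^*_{t,v}(x_s)$ and $z^*_{t,v}(x_s)$ into finitely many cells, and applies the Ramsey corollary twice to pass to a subtree of full order $\omega^{\omega^\xi}$ on which these values are essentially constant---one constant pair for $s\prec t$ and another for $t\prec s$. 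A shuffling lemma (Lemma~\ref{shuffling}) then converts the difference of these constants into genuine $(K,\ee/3)$- or $(L,\ee/3)$-separation uniformly over all splits, yielding $j(K,\ee/3)>\omega^{\omega^\xi}$ or $j(L,\ee/3)>\omega^{\omega^\xi}$. The Ramsey step is exactly what produces the uniformity across splits that your block-by-block scheme cannot supply.
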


\begin{corollary} For each $\xi\in \emph{\ord}$, $\mathscr{J}_{\omega^{\omega^\xi}}$ is a closed operator ideal.  

\end{corollary}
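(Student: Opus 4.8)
The plan is to obtain the corollary by simply assembling Proposition \ref{mostly ideal}, the super-weak-compactness characterization (Corollary \ref{characterize}), and Lemma \ref{sum lemma}; no new ideas are needed here. Recall that to say $\mathscr{J}_{\omega^{\omega^\xi}}$ is a closed operator ideal means that for all Banach spaces $X,Y$: (a) $\mathscr{J}_{\omega^{\omega^\xi}}(X,Y)$ contains every finite-rank operator; (b) it is a linear subspace of $\mathfrak{L}(X,Y)$; (c) it is stable under left and right composition with arbitrary operators; and (d) it is norm closed. Two of these are already in hand: (c) follows from parts (i) and (ii) of Proposition \ref{mostly ideal}, since for $A\in\mathscr{J}_{\omega^{\omega^\xi}}(X,Y)$ and operators $B\colon W\to X$, $C\colon Y\to Z$ we get $\mathcal{J}(CAB)\leqslant\mathcal{J}(CA)\leqslant\mathcal{J}(A)\leqslant\omega^{\omega^\xi}$; and (d) is precisely part (iii) of Proposition \ref{mostly ideal}. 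For (a), a finite-rank operator is super weakly compact because its ultrapowers are again finite-rank, hence compact, hence weakly compact; so by Corollary \ref{characterize} its $\mathcal{J}$ index is at most $\omega$, and $\omega=\omega^{\omega^0}\leqslant\omega^{\omega^\xi}$, whence every finite-rank operator (in particular $0$) lies in $\mathscr{J}_{\omega^{\omega^\xi}}(X,Y)$.

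It remains to establish (b), i.e.\ closure under scalar multiples and under addition. Closure under nonzero scalars is immediate from the identity $j((cA)^*B_{Y^*},c\ee)=j(A^*B_{Y^*},\ee)$ recorded in the proof of Proposition \ref{mostly ideal}, which after taking the supremum over $\ee>0$ gives $\mathcal{J}(cA)=\mathcal{J}(A)$; the operator $0$ is covered by (a). Closure under addition is exactly where Lemma \ref{sum lemma} is used. Suppose $A,B\in\mathscr{J}_{\omega^{\omega^\xi}}(X,Y)$ but, toward a contradiction, $A+B\notin\mathscr{J}_{\omega^{\omega^\xi}}(X,Y)$. Then $\mathcal{J}(A+B)>\omega^{\omega^\xi}$, so by the definition of $\mathcal{J}$ as $\sup_{\ee>0}j(\cdot,\ee)$ there is some $\ee>0$ with $j((A+B)^*B_{Y^*},\ee)>\omega^{\omega^\xi}$. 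Lemma \ref{sum lemma} then forces $j(A^*B_{Y^*},\ee/3)>\omega^{\omega^\xi}$ or $j(B^*B_{Y^*},\ee/3)>\omega^{\omega^\xi}$; in the first case $\mathcal{J}(A)\geqslant j(A^*B_{Y^*},\ee/3)>\omega^{\omega^\xi}$, contradicting $A\in\mathscr{J}_{\omega^{\omega^\xi}}(X,Y)$, and the second case is symmetric. Hence $A+B\in\mathscr{J}_{\omega^{\omega^\xi}}(X,Y)$, which finishes (b) and the proof.

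Since the only substantial ingredient, Lemma \ref{sum lemma}, is proved separately in the final section, the argument here is purely organizational and I anticipate no real obstacle; the work is in bookkeeping. The two points that do deserve a moment's attention are the small ordinal inequality $\omega\leqslant\omega^{\omega^\xi}$, which is what lets the finite-rank (equivalently, super weakly compact) operators sit inside $\mathscr{J}_{\omega^{\omega^\xi}}$ for every $\xi$, and keeping the notation straight: in Lemma \ref{sum lemma} the expression $\mathcal{J}(\cdot,\ee)$ is shorthand for $j((\cdot)^*B_{Y^*},\ee)$, so its conclusion is a statement about a single term of the supremum defining $\mathcal{J}$, which is exactly what makes the contradiction in (b) run.
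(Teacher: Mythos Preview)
Your proof is correct and follows exactly the approach the paper intends: the paper states this corollary immediately after Lemma \ref{sum lemma} without separate proof, having already noted that Proposition \ref{mostly ideal} supplies the ideal and closure properties and that only closure under sums remained. Your write-up just makes explicit the bookkeeping (finite-rank containment via Corollary \ref{characterize}, scalar invariance, and the contrapositive application of Lemma \ref{sum lemma}) that the paper leaves implicit.
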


We recall that in \cite{BCFW}, the Bourgain $\ell_1$ index of an operator was defined.  Given $A:X\to Y$ and $K\geqslant 1$, we let $T_1(A,X,Y, K)$ consist of all of those finite sequences in $B_X$ (including the empty sequence) so that the image of each sequence under $A$ satisfies a $K$ lower $\ell_1$ estimate. By this, we mean sequences $(x_i)_{i=1}^n\in B_X^{<\nn}$ such that $\|\sum_{i=1}^n a_i Ax_i\|\geqslant K^{-1}\sum_{i=1}^n |a_i|$ for all scalars $(a_i)_{i=1}^n$.  It is obvious that $T_1(A,X,Y, K)\subset J(A^*B_{Y^*}, K^{-1})$, whence $\textbf{NP}_1(A,X,Y):=\sup_{K\geqslant 1} o(T_1(A,X,Y,K))\leqslant \mathcal{J}(A)$.   Moreover, it was shown in \cite{BCFW} that for any ordinal $\xi$, there exists a reflexive Banach space $W_\xi$ so that the identity $I_{W_\xi}$ on $W_\xi$ satisfies $\textbf{NP}_1(I_{W_\xi}, W_\xi, W_\xi)>\omega^\xi$.  Thus these examples yield that for any ordinal $\xi$, there exists a weakly compact operator which does not lie in $\mathscr{J}_\xi$.   Moreover, for $\xi$ countable, $W_\xi$ can be taken to be separable.   Thus the classes $\mathscr{J}_\xi$ exhaust the ideal of weakly compact operators, but each $\mathscr{J}_\xi$ is properly contained within the ideal of weakly compact operators.  This observation completes the proof of Theorem \ref{main theorem}. 

We note that for any body $K$, the tree $J(K, \ee)$ is closed.  That is, for any $n\in \nn$ and for any sequence $((x^j_i)_{i=1}^n)_j$ of members of $J(K, \ee)$, and $(x_i)_{i=1}^n\in B_X^{<\nn}$ such that $x^j_i\underset{j}{\to }x_i$ for each $1\leqslant i\leqslant n$, $(x_i)_{i=1}^n\in J(K, \ee)$.  To see this, simply note that if $1\leqslant m<n$ is fixed and if for each $j\in \nn$,  $x^*_j\in K$ is chosen so that $x^*_j(x_i-x_k)\geqslant \ee$ for each $1\leqslant i \leqslant m<k\leqslant n$, then any $w^*$ cluster point $x^*$ of $(x^*_j)$ satisfies $x^*(x_i-x_k)\geqslant \ee$ for $1\leqslant i\leqslant m<k\leqslant n$.  Since our definition of body included $w^*$ compactness, $x^*\in K$.  Therefore if $X$ is separable, Bourgain's version of the Kunen-Martin theorem \cite{B} implies that  $J(K,\ee)$ is either ill-founded or has countable order.  Thus if $X$ is separable and $A:X\to Y$ is an operator, $A$ is weakly compact if and only if $\mathcal{J}(A)<\omega_1$.

As observed in the previous paragraph, $\textbf{NP}_1(A, X, Y)\leqslant \mathcal{J}(A)$.  In general, however, there is no way to bound $\mathcal{J}(A)$ by $\textbf{NP}_1(A,X,Y)$.  A somewhat obvious example of this is James space, $J$, which fails to be reflexive, but also fails to contain a copy of $\ell_1$.  Therefore $\textbf{NP}_1(I_J, J, J)$ is countable, while $\mathcal{J}(I_J)=\infty$.  More interesting examples are furnished by subspaces of the James tree space, $JT$.  Let $(e_t)_{t\in \nn^{<\nn}}$ denote the canonical Hamel basis for $c_{00}(\nn^{<\nn})$, where $\nn^{<\nn}$ is the set of all finite sequences of natural numbers.  By a \emph{segment}, we shall mean a (possibly empty) subset of $\nn^{<\nn}$ of the form $\{t: s\preceq t \preceq u\}$.  Recall that $s\preceq t$ means that $s$ is an initial segment of $t$.  Then the James tree space $JT$ is the completion of $c_{00}(\nn^{<\nn})$ under the norm defined by $$\|\sum_{t\in \nn^{<\nn}} a_t e_t\| = \sup\Bigl\{ \Bigl(\sum_{i=1}^n \bigl|\sum_{t\in \mathfrak{s}_i} a_t\bigr|^2\Bigr)^{1/2}: (\mathfrak{s}_i)_{i=1}^n \text{\ are disjoint segments}\Bigr\}.$$  Then $JT$ does not contain a copy of $\ell_1$, so $\textbf{NP}_1(I_{JT}, JT, JT)$ is countable.  However, for any well-founded tree $T$ on $\nn$, the closed span $[e_t: t\in T]$ is reflexive (which can easily be seen by induction on the order of $T$).  However, it is easy to see that the identity $I_T$ on the subspace $[e_t:t\in T]$ of $JT$ has $\mathcal{J}(I_T)\geqslant o(T)$.  Thus the subspaces of $JT$ furnish reflexive examples of Banach spaces with no uniform countable bound on the $\mathcal{J}$ index of their identities, while there does exist a uniform, countable bound on the Bourgain $\ell_1$ indices of these spaces, namely the value $\textbf{NP}_1(I_{JT}, JT, JT)$.

More generally, for any set $S$, we may define the James tree space $JT(S)$, where the Hamel basis $(e_t)_{t\in \nn^{<\nn}}$ is replaced by the Hamel basis $(e_t)_{t\in S^{<\nn}}$.  If $S= [1, \xi]$, there exists a well-founded tree $\mathcal{MT}_\xi$ (for a specific example of such a tree, see \cite{C}) on $[1,\xi]$ with $o(\mathcal{MT}_\xi)=\xi+1$, and the tree $(e_t)_{t\in \mathcal{MT}_\xi}$ witnesses the fact that $j([e_t:t\in \mathcal{MT}_\xi], 1)>\xi$, where $\xi$ may be uncountable.  As before, we deduce $[e_t: t\in \mathcal{MT}_\xi]$ is reflexive (actually, $[e_t: t\in T]$ is reflexive whenever $T$ is well-founded, which may be shown by induction on the order as in the countable case).  However, the $\ell_1$ index of $JT(S)$ cannot exceed the $\ell_1$ index of $JT$.  To see this, note that if $\zeta$ is the $\ell_1$ index of $JT$, $\zeta$ is countable. If the $\ell_1$ index of $JT(S)$ exceeded $\zeta$, then there would be a separable subspace $X$ of $JT(S)$ having $\ell_1$ index exceeding $\zeta$.  But then there would exist a countable subset $S_0$ of $S$ so that $X\subset [e_t: t\in S_0^{<\nn}]$.  But the latter space is isometrically isomorphic to a subspace of $JT$.   Thus there exists an ordinal $\zeta$ such that for any ordinal $\xi$, there exists a reflexive Banach space having $\mathcal{J}$ index exceeding $\xi$, but having $\ell_1$ index not exceeding $\zeta$, and we deduce that the $\mathcal{J}$ index cannot be controlled by the $\ell_1$ index.  Similarly, since every block sequence in $JT$ dominates the $\ell_2$ basis, $JT$ cannot contain a copy of $c_0$, and we deduce that we cannot control the $\mathcal{J}$ index by the $c_0$ index.  We may similarly deduce results for uncountable indices by passing to $JT(S)$ and repeating the arguments for $\ell_1$.

\section{Descriptive set theoretic results}

We wish to recall the coding of the class of operators between separable Banach spaces, modeled on Bossard's coding of the class of separable Banach spaces \cite{Bos}.  Let $C(2^\nn)$ denote the space of all continuous functions on the Cantor set.  Recall that $\textbf{SB}$ denotes the space of all closed subspaces of $C(2^\nn)$, endowed with the Effros-Borel structure, and that this structure is standard.  That is, there exists a Polish topology on $\textbf{SB}$ such that the Borel $\sigma$-algebra generated by this topology is the Effros-Borel $\sigma$-algebra.  We fix such a topology on $\textbf{SB}$ to which we omit direct reference. Recall also \cite{KRN} that there exists a sequence $d_n:\textbf{SB}\to C(2^\nn)$ of Borel functions, called \emph{selectors}, such that for each $X\in \textbf{SB}$, $d_n(X)\in \textbf{SB}$ and $D_X:=\{d_n(X):n\in \nn\}$ is dense in $X$.  Recall also the definition of the space $\mathfrak{L}\subset \textbf{SB}\times \textbf{SB}\times C(2^\nn)^\nn$ defined in \cite{BF} by $(X,Y, \hat{A})\in \mathfrak{L}$ if and only if $\hat{A}(n)\in Y$ for all $n\in \nn$ and there exists $k\in \nn$ such that $\|\sum_{i=1}^p q_i\hat{A}(n_i)\|\leqslant k\|\sum_{i=1}^p q_i d_{n_i}(X)\|$ for all sequences $(n_i)_{i=1}^p\in \nn^{<\nn}$ and all rationals $(q_i)_{i=1}^p$.  Then $\mathfrak{L}$ codes the space of all operators between separable Banach spaces by taking $A:X\to Y$ to $(X, Y, (Ad_n(X)))$ for $X,Y\in \textbf{SB}$. By an abuse of notation, we identify operators with triples in this way.  Moreover, $\mathfrak{L}$ is a Borel subset of $\textbf{SB}\times \textbf{SB}\times C(2^\nn)^\nn$, and therefore it is also standard.  We arrive at the following.

\begin{proposition} The index $(X, Y, \hat{A})\mapsto \mathcal{J}(A)$ is a coanalytic rank on $\mathfrak{L}\cap \mathfrak{WC}$, where $\mathfrak{WC}$ denotes the ideal of weakly compact operators.

\end{proposition}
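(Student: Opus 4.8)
The plan is to verify the two defining properties of a coanalytic rank on $\mathfrak{L}\cap\mathfrak{WC}$: first, that $\mathfrak{L}\cap\mathfrak{WC}$ is a coanalytic subset of $\mathfrak{L}$, and second, that the relations $\leqslant_{\mathcal{J}}$ and $<_{\mathcal{J}}$ defined by $(X,Y,\hat A)\leqslant_{\mathcal{J}}(X',Y',\hat A')$ iff $(X',Y',\hat A')\notin \mathfrak{WC}$ or ($(X,Y,\hat A)\in\mathfrak{WC}$ and $\mathcal{J}(A)\leqslant\mathcal{J}(A')$) — and similarly with strict inequality — are both analytic as subsets of $(\mathfrak{L}\cap\mathfrak{WC})\times\mathfrak{L}$. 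The standard route, following Bossard and Dodos, is to realize $\mathcal{J}$ as a supremum over rationals $\ee>0$ of tree-orders $o(J(A^*B_{Y^*},\ee))$, and then transfer everything to the Polish space of trees on a fixed countable set via a Borel map, using that $T\mapsto o(T)$ is the canonical coanalytic rank on the set $\mathrm{WF}$ of well-founded trees.

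First I would fix, for each rational $\ee>0$, a Borel map $\mathfrak{L}\ni(X,Y,\hat A)\mapsto \widetilde T_\ee(X,Y,\hat A)\in \mathrm{Tr}(\mathbb{Q}^{<\nn})$ (trees on the rationals, or on $\mathbb{Q}\times\nn$ to keep track of which dense vector is used) whose order equals $j(A^*B_{Y^*},\ee)$. The point is that, by the Hahn–Banach reformulation recorded after Proposition~\ref{james1}, membership of a finite sequence $(x_i)_{i=1}^n$ in $J(A^*B_{Y^*},\ee)$ is equivalent to a condition involving only the norms $\|\sum a_iAx_i-\sum a_jAx_j\|$ over rational convex coefficients, and these norms are computed from the countable data $(Ad_n(X))_n=\hat A$ inside $Y\subset C(2^\nn)$; hence this is a Borel condition in $(X,Y,\hat A)$. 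Replacing $B_X$ by the $\mathbb{Q}$-span of the selectors $d_n(X)$ intersected with a rational approximation of the ball (and noting $J(K,\ee)$ is closed, so passing to a countable dense set of sequences does not change ill-foundedness or the order) gives a genuine tree on a fixed countable index set. One then has $o(\widetilde T_\ee(X,Y,\hat A))=j(A^*B_{Y^*},\ee)$ and $A$ is weakly compact iff every $\widetilde T_\ee$ is well-founded; since $\mathrm{WF}$ is coanalytic, $\mathfrak{L}\cap\mathfrak{WC}=\bigcap_{\ee\in\mathbb{Q}_{>0}}\{(X,Y,\hat A): \widetilde T_\ee\in\mathrm{WF}\}$ is a countable intersection of coanalytic sets, hence coanalytic.

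Next, to check the rank inequalities, recall that $o$ is a coanalytic rank on $\mathrm{WF}$, so the relations $S\leqslant_o T$ (meaning $T\notin\mathrm{WF}$ or ($S\in\mathrm{WF}$ and $o(S)\leqslant o(T)$)) and $S<_o T$ are analytic in $\mathrm{Tr}\times\mathrm{Tr}$. Since $\mathcal{J}(A)=\sup_{\ee\in\mathbb{Q}_{>0}} o(\widetilde T_\ee(X,Y,\hat A))$, one has for weakly compact $A$ that $\mathcal{J}(A)\leqslant\mathcal{J}(A')$ iff for every rational $\ee$ there is a rational $\delta$ with $\widetilde T_\ee(X,Y,\hat A)\leqslant_o \widetilde T_\delta(X',Y',\hat A')$; composing the Borel maps $\widetilde T_\ee,\widetilde T_\delta$ with the analytic relation $\leqslant_o$ and taking countable intersections and unions over rationals keeps this analytic. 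The strict version $<_{\mathcal J}$ requires slightly more care since a strict inequality of suprema need not come from a single pair of indices; but using that each $o(\widetilde T_\ee)$ is either below $\mathcal{J}(A)$ or, when the sup is attained, equal, one writes $\mathcal{J}(A)<\mathcal{J}(A')$ iff there is a rational $\delta$ such that for all rational $\ee$, $\widetilde T_\ee(X,Y,\hat A)<_o\widetilde T_\delta(X',Y',\hat A')$, which is again a countable combination of analytic conditions. (Here one also needs the elementary fact that the supremum defining $\mathcal J(A)$ over rational $\ee$ equals the supremum over all real $\ee>0$, which is immediate from monotonicity of $j(K,\ee)$ in $\ee$.)

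I expect the main obstacle to be the bookkeeping in step one: producing a single Borel map into a space of trees on a fixed countable set whose order is exactly $j(A^*B_{Y^*},\ee)$, rather than merely a Borel map whose order is between $j(A^*B_{Y^*},\ee')$ and $j(A^*B_{Y^*},\ee'')$ for nearby parameters. The resolution is the closedness of $J(K,\ee)$ noted at the end of Section~3 together with the Hahn–Banach characterization: closedness guarantees that restricting to sequences drawn from the dense set $D_X$ of selectors (suitably normalized by rationals $\leqslant 1$, with a harmless $\ee/2$ slack absorbed by a diagonal argument over $\ee\in\mathbb Q_{>0}$) yields a tree with the same order, and the Hahn–Banach form makes the membership condition a Borel — indeed closed — condition in the coded data. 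Everything else is the routine transfer through the fact that $o$ is the canonical $\mathbf{\Pi}^1_1$-rank on $\mathrm{WF}$, exactly as in Bossard's and Dodos's treatments of the Szlenk and Bourgain indices.
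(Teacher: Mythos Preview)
Your approach is correct in outline but takes a more laborious route than the paper. The paper's key simplification is to combine \emph{all} of the trees $J(A^*B_{Y^*},1/k)$ (restricted to the selector sequences) into a \emph{single} tree on $\nn$ by prepending the index $k$: namely
\[
f(X,Y,\hat A)=\{\varnothing\}\cup\{(k):k\in\nn\}\cup\bigl\{k\cat(n_i)_{i=1}^p:(d_{n_i}(X))_{i=1}^p\in J(A^*B_{Y^*},1/k)\bigr\}.
\]
This produces one Borel map $f:\mathfrak{L}\to\mathbf{Tr}$ with $f^{-1}(\mathbf{WF})=\mathfrak{L}\cap\mathfrak{WC}$ and with $o\circ f$ inducing the same prewellordering as $\mathcal J$, so the result follows immediately from the standard fact that $o$ is a $\Pi^1_1$-rank on $\mathbf{WF}$ and that $\Pi^1_1$-ranks pull back along Borel maps. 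No direct verification of the rank relations is needed.

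Your direct verification does work, but the equivalences you assert --- e.g.\ ``$\mathcal J(A)\leqslant\mathcal J(A')$ iff for every rational $\ee$ there is a rational $\delta$ with $o(\widetilde T_\ee(A))\leqslant o(\widetilde T_\delta(A'))$'' --- require justification when the supremum $\mathcal J(A')$ is not attained by any $j(A',\delta)$. The fix is that each $j(K,\ee)$ is necessarily a \emph{successor} ordinal (the tree always contains $\varnothing$), so $\mathcal J(A')$ is attained precisely when it is a successor, and in the limit case every $j(A,\ee)$ lies strictly below it; this makes the forward direction go through. You gesture at this with ``when the sup is attained, equal'' but never make it explicit. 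The paper's single-tree reduction sidesteps this bookkeeping entirely, which is what it buys; what your approach buys is that it makes the underlying $\Sigma^1_1$ structure of the rank relations visible rather than hiding it inside the pullback lemma.
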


\begin{remark} It follows from this result that for any countable $\xi$, $\{(X, Y, \hat{A})\in \mathfrak{L}: \mathcal{J}(A)\leqslant \xi\}$ is a Borel subset of $\mathfrak{L}$, and for any analytic subset $\mathcal{A}$ of $\mathfrak{L}$, $\sup\{\mathcal{J}(A): A\in \mathcal{A}\}$ is countable.  Moreover, it follows from the proof, which involves a Borel reduction of the weakly compact operators to the well-founded trees on $\nn$, that $ \mathfrak{L}\cap \mathfrak{WC}$ is coanalytic in $\mathfrak{L}$.  These facts concerning coanalytic ranks can be found in \cite{D}.

\end{remark}

\begin{remark} It was shown in \cite{BF} that $ \mathfrak{L}\cap \mathfrak{WC}$ is coanalytic complete, and in particular non-Borel, in $\mathfrak{L}$.

\end{remark}

\begin{proof} Let \textbf{Tr} denote the trees on $\nn$, topologized with the relative topology inherited from $2^{\nn^{<\nn}}$. Let $\textbf{WF}\subset \textbf{Tr}$ denote the well-founded trees in $\textbf{Tr}$.   To show that $\mathcal{J}$ is a coanalytic rank, it suffices to show that the map $$(X, Y, \hat{A})\underset{f}{\mapsto} \{\varnothing\}\cup \{(k):k\in \nn\}\cup \bigl\{k\cat (n_i)_{i=1}^p: (d_{n_i}(X))_{i=1}^p\in J(A^*B_{Y^*}, k^{-1})\bigr\}$$ is Borel, $f^{-1}(\textbf{WF})=\mathfrak{L}\cap \mathfrak{WC}$, and that $o(f(X,Y, \hat{A}))= \mathcal{J}(A)+1$ \cite{D}.  The second and third facts follow from an inessential modification of the a similar argument from \cite{BCFW} concerning the indices $\textbf{NP}_p$.  To see that $f$ is Borel, as argued in \cite{BCFW}, it is sufficient to fix $t=k\cat (n_i)_{i=1}^p$ and prove that $\{(X,Y, \hat{A}): t\in f(X,Y,\hat{A})\}$ is Borel.  But $t=k\cat (n_i)_{i=1}^p\in f(X,Y, \hat{A})$ if and only if $\|d_{n_i}(X)\|\leqslant 1$ for each $1\leqslant i\leqslant p$ and $\|\sum_{i=1}^m a_i \hat{A}(n_i) - \sum_{i=m+1}^p a_i \hat{A}(n_i)\|\geqslant 1/k$ for each $1\leqslant m<p$ and all non-negative, rational scalars $(a_i)_{i=1}^p$ such that $\sum_{i=1}^m a_i=\sum_{i=m+1}^pa_i =1$.  Since this is a collection of countably many Borel conditions, we deduce that $f$ is Borel.

\end{proof}

\section{Proof of Lemma \ref{sum lemma}}

\subsection{The Hessenberg sum, results on simple colorings, tree multiplication}

Recall that any ordinal $\xi$ can be uniquely written $$\xi=\omega^{\alpha_1} n_1+\ldots \omega^{\alpha_k} n_k$$ for ordinals $\alpha_1>\ldots >\alpha_k$ and natural numbers $n_i$ (where $k=0$ corresponds to $\xi=0$) \cite{M}.  This is called the \emph{Cantor normal form} of $\xi$.  If $\xi, \zeta$ are two ordinals, by adding zero terms into the Cantor normal forms of $\xi$ and $\zeta$, we can express $$\xi=\omega^{\alpha_1} m_1+\ldots +\omega^{\alpha_k} m_k,$$ $$\zeta=\omega^{\alpha_1} n_1+\ldots + \omega^{\alpha_k} n_k,$$ where the same ordinals $\alpha_i$ appear in the expressions.  We then define the \emph{Hessenberg} or \emph{natural} sum by $$\xi\oplus \zeta= \omega^{\alpha_1}(m_1+n_1)+\ldots +\omega^{\alpha_k}(m_k+n_k).$$  

Because it is rather inconvenient to include the empty sequence in our proofs below, we will be concerned with subsets $T$ of $S^{<\nn}\setminus \{\varnothing\}$ such that $T\cup \{\varnothing\}$ is a tree.  Such sets are called $B$-\emph{trees}.  The notions of derived $B$-trees and orders can be relativized to $B$-trees.  If $U,V$ are subsets of $S_1^{<\nn}$, $S_2^{<\nn}$, respectively, a function $\theta:U\to V$ is called \emph{monotone} if for each $s,t\in U$ with $s\prec t$, $\theta(s)\prec \theta(t)$.  Given a monotone map $\theta:U\to V$, we say a function $e:MAX(U)\to MAX(V)$ is an \emph{extension} of $\theta$ if for each $s\in MAX(U)$, $\theta(s)\preceq e(s)$.  We say a pair $(\theta, e):U\times MAX(U)\to V\times MAX(V)$ is an \emph{extended monotone map} if $\theta$ is monotone and $e$ is an extension of $e$. To avoid cumbersome notation, we will often say an extended monotone map $(\theta, e):U\times MAX(U)\to V\times MAX(V)$ is from $U$ to $V$, rather than $U\times MAX(U)$ to $V\times MAX(V)$, or write $(\theta, e):U\to V$ in place of $(\theta, e):U\times MAX(U)\to V\times MAX(V)$.   Note that if $V$ is a non-empty subset of a well-founded $B$-tree and if $\theta:U\to V$ is any monotone map, then there exists an extension $e$ of $\theta$.  

The following method for ``multiplying'' non-empty, well-founded $B$-trees is inspired by the ``replacement trees'' defined in \cite{JO}.  Recall the convention that for sets $S_1, S_2$, $(S_1\times S_2)^{<\nn}$ is identified with $\{(s,t)\in S_1^{<\nn}\times S_2^{<\nn}: |s|=|t|\}$.  That is, we identify sequences of pairs with pairs of sequences via $(a_i, b_i)_{i=1}^n\leftrightarrow ((a_i)_{i=1}^n, (b_i)_{i=1}^n)$.  We identify $\varnothing$ with $(\varnothing, \varnothing)$.  Given a member $x$ of $S$ and $n\in \nn$, we let $x^{(n)}$ denote the constant sequence in $S$ which has length $n$ and begins with $x$.  If $T_0,T_1$ are non-empty, well-founded $B$-trees, we let $[T_0, T_1]$ consist of all concatenations $$t=(s_1, x_1^{(|s_1|)})\cat (s_2, x_2^{(|s_2|)})\cat \ldots \cat (s_k, x_k^{(|s_k|)})$$ such that \begin{enumerate}[(i)]\item $k\in \nn$, \item $s_i\in T_0$ for each $1\leqslant i\leqslant k$,\item for each $1\leqslant i<k$, $s_i\in MAX(T_0)$, \item $(x_i)_{i=1}^k\in T_1$. \end{enumerate} If $s_k\in MAX(T_0)$ as well, we will say $t$ is \emph{regular} in $[T_0, T_1]$.  The intuition behind this construction is to build a ``tree of trees,'' where we think of beginning with the tree $T_0$ and replacing each of its members with a tree isomorphic to $T_1$.  To understand this, note that if $t=(s_1, x_1^{(|s_1|)})\cat (s_2, x_2^{(|s_2|)})\cat \ldots \cat (s_k, x_k^{(|s_k|)})\in [T_0, T_1]$ is regular, and if $(x_i)_{i=1}^{k+1}\in T_1$ for some $x_{k+1}$, then the function taking $s\in T_0$ to $t\cat (s, x_{k+1}^{(|s|)})$ is an order isomorphism of $T_0$ with a subset of $[T_0, T_1]$, and the minimal regular extensions of $t$ in $[T_0, T_1]$ are precisely the images of members of $MAX(T_0)$ under this map. Therefore the portion of the tree $[T_0, T_1]$ consisting of proper extensions of a fixed regular member $t$ which are initial segments of the minimal regular extensions of $t$ can be canonically identified with $T_0$. Similarly, the initial segments of the set of minimal regular members of $[T_0, T_1]$ can be canonically identified with $T_0$ (this is analogous to the previous sentence with $t$ replaced by $\varnothing$).   This observation gives an intuition for why the derived trees of $[T_0, T_1]$ satisfy $[T_0, T_1]^{o(T_0) \xi} = [T_0, T^\xi_1]$, which can be easily shown by induction on $\xi$.  This fact yields that $o([T_0, T_1])=o(T_0)o(T_1)$.  The interested reader is invited to compare this process with the convolution of regular families and its effects on Cantor-Bendixson index, discussed, for example, in \cite{LT}.  

Of particular interest to us will be the $B$-trees $\ttt_\xi$, defined in \cite{C}.  We let $\ttt_1=\{(1)\}$, $\ttt_{\xi+1}= \{(\xi+1), (\xi+1)\cat t: t\in \ttt_\xi\}$, and $\ttt_\xi=\cup_{\eta<\xi} \ttt_{\eta+1}$ when $\xi$ is a limit.  Note that when $\xi$ is a limit ordinal, $\ttt_\xi$ is a totally incomparable union.   We will also be interested in the $B$-trees $[\ttt_\xi, \ttt_k]$ when $k\in \nn$.  Note that in this case, an arbitrary member $t$ of $[\ttt_\xi, \ttt_k]$ can be written uniquely as $$(t_1, k^{(|t_1|)})\cat (t_2, (k-1)^{(|t_2|)})\cat \ldots \cat (t_i, (k-i+1)^{(|t_i|)}).$$  In this case, we say that $t$ is in the $i^{th}$ \emph{level} of $[\ttt_\xi, \ttt_k]$.  In this case, the first level is naturally order isomorphic to $\ttt_\xi$ via the map $t\leftrightarrow (t, k^{(|t|)})$.  Moreover, for $i<k$, if $t$ is maximal in the $i^{th}$ level, then the proper extensions of $t$ which lie in the $(i+1)^{st}$ level, a set which we will call the \emph{unit under} (or \emph{beneath}) $t$, form a set naturally ordre isomorphic to $\ttt_\xi$ via the identification $s\leftrightarrow t\cat (s, (k-i)^{(|s|)})$.  A \emph{unit} of $[\ttt_\xi, \ttt_k]$ will refer either to the first level of $[\ttt_\xi, \ttt_k]$ or the unit under $t$ for some $t$.  Note that two comparable members of the same level of $[\ttt_\xi, \ttt_k]$ must lie in the same unit, the first level of $[\ttt_\xi, \ttt_k]$ is a single unit, and for any $1<i\leqslant k$, the $i^{th}$ level of $[\ttt_\xi, \ttt_k]$ is the totally incomparable union of the units beneath the maximal members of the $(i-1)^{st}$ level.  

More generally, if $t$ is a maximal member of the $i^{th}$ level of $[\ttt_\xi, \ttt_k]$ where $k-i=j$, then the proper extensions of $t$ in $[\ttt_\xi, \ttt_k]$ form a set naturally order isomorphic to $[\ttt_\xi, \ttt_j]$ via the identification $$t\cat (t_i, (k-i)^{(|t_i|)})\cat \ldots \cat (t_m, (k-m)^{(|t_m|)})\leftrightarrow (t_i, j^{(|t_i|)}) \cat (t_{i+1}, (j-1)^{(|t_{i+1}|)}) \cat \ldots \cat (t_m, (j-m+1)^{(|t_m|)}),$$  valid for each $1\leqslant m\leqslant j$.  Similarly, the first $j$ levels of $[\ttt_\xi, \ttt_k]$ are naturally order isomorphic to $[\ttt_\xi, \ttt_j]$ (this is similar to the previous fact with $t=\varnothing$).

  The following result is easily shown by induction on $\zeta$ for $\xi$ held fixed.  This result can also be deduced from the well-known and easy to see result that for $B$-trees $T_0$ and $T_1$, there exists a monotone function $\theta:T_0\to T_1$ if and only if $o(T_0)\leqslant o(T_1)$ (see \cite{Ke}).  

\begin{proposition} For $0<\xi, \zeta\in \emph{\ord}$, there exist monotone maps $\theta_1:[\ttt_\xi, \ttt_\zeta]\to \ttt_{\xi\zeta}$ and $\theta_2:\ttt_{\xi\zeta}\to [\ttt_\xi, \ttt_\zeta]$.  

\end{proposition}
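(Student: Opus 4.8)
The plan is to prove the two monotone maps separately, in each case by transfinite induction on $\zeta$ with $\xi$ held fixed, using the structural description of $[\ttt_\xi, \ttt_\zeta]$ in terms of levels and units that was developed in the preceding paragraphs, together with the definitions $\ttt_{\xi\zeta}$ via the recursion $\ttt_{\eta+1} = \{(\eta+1)\} \cup \{(\eta+1)\cat t : t\in \ttt_\eta\}$ and $\ttt_\la = \cup_{\eta<\la}\ttt_{\eta+1}$ at limits. The remark in the statement — that existence of a monotone $\theta:T_0\to T_1$ is equivalent to $o(T_0)\le o(T_1)$ — combined with the already-established fact $o([\ttt_\xi,\ttt_\zeta]) = o(\ttt_\xi)o(\ttt_\zeta) = \xi\zeta$ and $o(\ttt_{\xi\zeta}) = \xi\zeta$, gives both maps at once; so the cleanest route is to first record that equivalence (citing \cite{Ke}), then invoke it. But I will also sketch the direct inductive construction, since it is the more robust and self-contained argument.

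\textbf{Construction of $\theta_2:\ttt_{\xi\zeta}\to[\ttt_\xi,\ttt_\zeta]$.} First I would handle $\zeta=1$: here $\xi\zeta=\xi$ and $[\ttt_\xi,\ttt_1]$ is, by the level description, just its first level, which is order-isomorphic to $\ttt_\xi$, so $\theta_2$ is that isomorphism. For the successor step $\zeta\rightsquigarrow\zeta+1$: we have $\xi(\zeta+1)=\xi\zeta+\xi$, and $\ttt_{\xi\zeta+\xi}$ decomposes according to its first $\xi$-many ``coordinates''; more precisely, using the recursion for $\ttt$, a member of $\ttt_{\xi\zeta+\xi}$ is an initial segment either of a node of ``height $<\xi$'' or of an extension that, after its first $\xi$ coordinates, continues as a copy of $\ttt_{\xi\zeta}$. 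On the target side, $[\ttt_\xi,\ttt_{\zeta+1}]$ has $\zeta+1$ levels; its first level is a copy of $\ttt_\xi$, and beneath each maximal member of the first level sits a copy of $[\ttt_\xi,\ttt_\zeta]$ (the ``more generally'' paragraph with $i=1$, $j=\zeta$). So I define $\theta_2$ to send the initial-$\xi$ part isomorphically into the first level of $[\ttt_\xi,\ttt_{\zeta+1}]$, and then, on each tail past a maximal first-level node, apply the inductive $\theta_2:\ttt_{\xi\zeta}\to[\ttt_\xi,\ttt_\zeta]$ transplanted into the corresponding unit; monotonicity is immediate since we are gluing monotone maps along a tree decomposition respecting $\prec$. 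At a limit $\la$: $\ttt_{\xi\la}=\ttt_{\sup_{\eta<\la}\xi\eta}=\cup_{\eta<\la}\ttt_{(\xi\eta)+1}$ is a totally incomparable union, and every node of $\ttt_{\xi\la}$ lies in some $\ttt_{\xi\eta+1}$ (note $\xi\eta+1\le\xi\eta+\xi=\xi(\eta+1)$, and one checks $\ttt_{\xi\eta+1}$ embeds monotonically into $[\ttt_\xi,\ttt_{\eta+1}]$ using the successor case applied to $\eta$, plus the initial-segment monotone embedding $[\ttt_\xi,\ttt_{\eta+1}]\hookrightarrow[\ttt_\xi,\ttt_\la]$ coming from the ``first $j$ levels'' remark); since $[\ttt_\xi,\ttt_\la]$ is likewise a totally incomparable union over these initial blocks, patching the $\theta_2$'s over the incomparable union yields the map.

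\textbf{Construction of $\theta_1:[\ttt_\xi,\ttt_\zeta]\to\ttt_{\xi\zeta}$.} This is the symmetric induction. For $\zeta=1$, $[\ttt_\xi,\ttt_1]\cong\ttt_\xi=\ttt_{\xi\cdot 1}$. For $\zeta+1$: take $t\in[\ttt_\xi,\ttt_{\zeta+1}]$; if $t$ lies in (the initial segments of) the first level, send it by the isomorphism with $\ttt_\xi$ into the first $\xi$ coordinates of $\ttt_{\xi\zeta+\xi}=\ttt_{\xi(\zeta+1)}$; if $t$ properly extends a maximal first-level node, then by the ``more generally'' paragraph the set of such extensions is order-isomorphic to $[\ttt_\xi,\ttt_\zeta]$, onto which we apply the inductive $\theta_1:[\ttt_\xi,\ttt_\zeta]\to\ttt_{\xi\zeta}$ and then prepend the length-$\xi$ stem coming from the first-level node — the result is a legitimate node of $\ttt_{\xi(\zeta+1)}$ because a maximal element of $\ttt_\xi$ followed by an element of $\ttt_{\xi\zeta}$ is an element of $\ttt_{\xi\zeta+\xi}$ by iterating the successor recursion. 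At a limit $\la$: $[\ttt_\xi,\ttt_\la]=\cup_{\eta<\la}[\ttt_\xi,\ttt_{\eta+1}]$ (by the ``first $j$ levels'' identification, with $[\ttt_\xi,\ttt_{\eta+1}]$ the first $\eta+1$ levels), this being a nested/incomparable union, and each $[\ttt_\xi,\ttt_{\eta+1}]$ maps by the successor case monotonically into $\ttt_{\xi(\eta+1)}\subset\ttt_{\xi\la}$; so the union of these maps works.

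\textbf{Main obstacle.} The genuinely delicate point — and the one I would spend real care on — is bookkeeping the limit stages: verifying that $\ttt_{\xi\la}$ and $[\ttt_\xi,\ttt_\la]$ really are the totally-incomparable unions of the initial blocks $\ttt_{\xi(\eta+1)}$ and $[\ttt_\xi,\ttt_{\eta+1}]$ respectively, so that the successor-stage maps can be glued without conflict on overlaps. One must also be slightly careful that $\xi\eta + 1$ versus $\xi(\eta+1)$ discrepancies do not break the embedding; this is absorbed by always routing through $\ttt_{\xi(\eta+1)}$ and using $\ttt_{\xi\eta+1}\subseteq\ttt_{\xi(\eta+1)}$. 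Given how heavily the paper has already reduced this to bookkeeping — and that one may simply quote the $o(T_0)\le o(T_1)$ criterion from \cite{Ke} against the already-proved $o([\ttt_\xi,\ttt_\zeta])=\xi\zeta=o(\ttt_{\xi\zeta})$ — the shortest honest proof is: by \cite{Ke} a monotone map $T_0\to T_1$ between $B$-trees exists iff $o(T_0)\le o(T_1)$; since $o([\ttt_\xi,\ttt_\zeta])=o(\ttt_\xi)o(\ttt_\zeta)=\xi\zeta$ as shown above, and $o(\ttt_{\xi\zeta})=\xi\zeta$, both $o([\ttt_\xi,\ttt_\zeta])\le o(\ttt_{\xi\zeta})$ and $o(\ttt_{\xi\zeta})\le o([\ttt_\xi,\ttt_\zeta])$ hold, giving $\theta_1$ and $\theta_2$.
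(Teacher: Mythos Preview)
Your proposal is correct and follows exactly the two routes the paper itself indicates: the paper gives no proof, merely stating that the result ``is easily shown by induction on $\zeta$ for $\xi$ held fixed'' and ``can also be deduced from the well-known and easy to see result that for $B$-trees $T_0$ and $T_1$, there exists a monotone function $\theta:T_0\to T_1$ if and only if $o(T_0)\leqslant o(T_1)$ (see \cite{Ke}).'' You supply both---the direct induction and the Kechris shortcut via $o([\ttt_\xi,\ttt_\zeta])=\xi\zeta=o(\ttt_{\xi\zeta})$---and you are right that the latter is the cleanest honest proof; the limit-stage bookkeeping you flag is real (the $\ttt_\mu$ are not nested, so one must route through the totally incomparable decomposition carefully), but it is absorbed entirely by the order-comparison criterion.
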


The following facts were shown in \cite{C}.

\begin{proposition} 
\begin{enumerate}[(i)]\item For any $0<\xi\in \emph{\ord}$ and any finite partition $\mathcal{P}$ of $MAX(\ttt_\xi)$, there exists an extended monotone map $(\theta, e)$ of $\ttt_\xi$ into $\ttt_\xi$ and $P\in \mathcal{P}$ so that $e(MAX(\ttt_\xi))\subset P$.  \item For any $\xi\in \emph{\ord}$, any finite set $S$, and any function $$f:\{(s,t)\in \ttt_{\omega^\xi}\times MAX(\ttt_{\omega^\xi}): s\preceq t\}\to S,$$ there exists $x\in S$ and an extended monotone map $(\theta, e)$ of $\ttt_{\omega^\xi}$ into itself so that for all $s\preceq t\in MAX(\ttt_{\omega^\xi})$, $f(\theta(s), e(t))=x$.  \item For any $\xi\in \emph{\ord}$, any finite set $S$, and any function $f:\ttt_{\omega^\xi}\to S$, there exists a monotone map $\theta:\ttt_{\omega^\xi}\to \ttt_{\omega^\xi}$ such that $f\circ \theta$ is constant. \item For any $\xi\in \emph{\ord}$, any $n\in \nn$, and $$f:\{(s,t)\in \ttt_{\omega^\xi(2n-1)}\times MAX(\ttt_{\omega^\xi(2n-1)}): s\preceq t\}\to 2,$$ there exists $\ee\in 2$ and an extended monotone map $(\theta,e):\ttt_{\omega^\xi n}\to \ttt_{\omega^\xi (2n-1)}$ so that for all $s\preceq t\in MAX(\ttt_{\omega^\xi n})$, $\ee= f(\theta(s), e(t))$. \end{enumerate}

\label{recall}
\end{proposition}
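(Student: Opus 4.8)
The plan is to prove (i) by a transfinite induction of its own, to prove (ii) and (iv) together by a single transfinite induction on $\xi$ resting on (i) and the tree multiplications of the previous paragraph, and to deduce (iii) from (ii).

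\emph{Part (i).} I would induct on $\xi$, and it is enough to treat two-piece partitions (an $m$-piece partition is handled by $m-1$ two-piece steps, composing the resulting extended monotone maps). If $\xi=1$ then $MAX(\ttt_1)$ is a singleton and there is nothing to prove. If $\xi=\zeta+1$ then $MAX(\ttt_{\zeta+1})=\{(\zeta+1)\cat t:t\in MAX(\ttt_\zeta)\}$, so a partition of $MAX(\ttt_{\zeta+1})$ pulls back to one of $MAX(\ttt_\zeta)$; apply the inductive hypothesis to $\ttt_\zeta$ and prepend $(\zeta+1)$. If $\xi$ is a limit, write $\ttt_\xi=\bigcup_{\eta<\xi}\ttt_{\eta+1}$ as a totally incomparable union. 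The inductive hypothesis gives, for each $\eta$, an extended monotone self-map of $\ttt_{\eta+1}$ carrying $MAX(\ttt_{\eta+1})$ into one piece, with some color $c_\eta$; since there are two colors, one color $c$ occurs for a cofinal set $E$ of $\eta$. For each $\beta<\xi$ choose $\eta(\beta)\in E$ with $\eta(\beta)\geqslant\beta$; since $o(\ttt_{\beta+1})=\beta+1\leqslant\eta(\beta)+1=o(\ttt_{\eta(\beta)+1})$ there is a monotone map $\ttt_{\beta+1}\to\ttt_{\eta(\beta)+1}$, which we extend and postcompose with the good self-map of $\ttt_{\eta(\beta)+1}$. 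The pieces $\ttt_{\beta+1}$ being pairwise incomparable, these maps glue to the required extended monotone self-map of $\ttt_\xi$.

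\emph{Parts (ii) and (iv).} I would prove these simultaneously by transfinite induction on $\xi$. For $\xi=0$, $\ttt_{\omega^0}=\ttt_1$ carries a single pair and both assertions are trivial. For $\xi$ a limit one uses the totally incomparable union $\ttt_{\omega^\xi}=\bigcup_{\alpha<\omega^\xi}\ttt_{\alpha+1}$: for each $\alpha$ pick $\eta<\xi$ with $\alpha<\omega^\eta$, use a monotone map $\ttt_{\omega^\eta}\to\ttt_{\omega^\xi}$ together with (ii) for $\eta$ to produce a monotone image of $\ttt_{\omega^\eta}$ in $\ttt_{\omega^\xi}$ on whose pairs $f$ is constant with some color $c_\alpha$, and precompose with a monotone map $\ttt_{\alpha+1}\to\ttt_{\omega^\eta}$; then repeat the cofinal-color-plus-gluing argument from the limit case of (i). The successor step is the core. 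Using the monotone maps running in both directions between $\ttt_{\omega^{\xi+1}}$ and $[\ttt_{\omega^\xi},\ttt_\omega]$ (recall $\omega^\xi\cdot\omega=\omega^{\xi+1}$), and then the decomposition $[\ttt_{\omega^\xi},\ttt_\omega]=\bigcup_k[\ttt_{\omega^\xi},\ttt_k]$ into a totally incomparable union, it is enough to treat a pair-coloring $f$ of a single finite tower $[\ttt_{\omega^\xi},\ttt_k]$, which I would process one level at a time, from the deepest level $k$ back up to level $1$. At level $\ell$, a pair whose first coordinate lies in a given level-$\ell$ unit $V\cong\ttt_{\omega^\xi}$ has its color determined by $V$, by the pair lying inside $V$, and by how the maximal second coordinate continues through the (already processed) levels beneath $V$; one first applies (i) to the relevant sub-towers beneath to kill that last dependence, then applies (ii) for $\xi$ inside $V$ to make the color constant on pairs of $V$, say with value $c_V$, and finally, for $\ell\geqslant 2$, applies (i) to $[\ttt_{\omega^\xi},\ttt_{\ell-1}]$ (transferred from $\ttt_{\omega^\xi(\ell-1)}$ along the monotone maps above) to homogenize the $c_V$ into a single color $c_\ell$; at level $1$ there is one unit and nothing to homogenize. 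After the $k$ rounds, $f$ depends only on the level of its first coordinate, i.e.\ on a function $\{1,\dots,k\}\to S$. For (iv), where $k=2n-1$ and $|S|=2$, a pigeonhole gives $n$ levels $\ell_1<\dots<\ell_n$ of a common color $\ee$, and composing $\ttt_{\omega^\xi n}\to[\ttt_{\omega^\xi},\ttt_n]$ with the extended monotone map realizing the $n$ source units as the target units of levels $\ell_1,\dots,\ell_n$ and then with $[\ttt_{\omega^\xi},\ttt_{2n-1}]\to\ttt_{\omega^\xi(2n-1)}$ yields an extended monotone map on whose pairs $f$ is constantly $\ee$. For (ii) over $\omega^{\xi+1}$, the pigeonhole gives some color on at least $k/|S|$ of the $k$ levels, hence a fixed color $c$ that does so for cofinally many $k$; each finite-tower piece $[\ttt_{\omega^\xi},\ttt_{k'}]$ of the domain then maps onto a $c$-monochromatic sub-tower of a sufficiently tall such tower, and the pieces glue over the incomparable union.

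\emph{Part (iii).} This follows from (ii): given $f:\ttt_{\omega^\xi}\to S$, apply (ii) to the pair-coloring $(s,t)\mapsto f(s)$ to obtain $x\in S$ and an extended monotone $(\theta,e)$ with $f(\theta(s))=x$ whenever $s\preceq t\in MAX(\ttt_{\omega^\xi})$. Since every node of $\ttt_{\omega^\xi}$ extends to a maximal node, $f\circ\theta\equiv x$.

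The step I expect to be the genuine obstacle is the bookkeeping in the successor case of (ii) and (iv): organizing the many successive restrictions---of different units at different levels, and of the short towers $[\ttt_{\omega^\xi},\ttt_{\ell-1}]$---into one coherent extended monotone self-map, and checking that re-restricting the shallow levels $1,\dots,\ell-1$ while homogenizing level $\ell$ only deletes some of the already-processed deeper units rather than altering them. The remaining ingredients are direct applications of (i) and of the inductive hypothesis for (ii), together with the monotone maps between the tree multiplications $[\ttt_{\omega^\xi},\ttt_\zeta]$ and the trees $\ttt_{\omega^\xi\zeta}$ supplied in the previous paragraph.
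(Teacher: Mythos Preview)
The paper does not prove this proposition at all; the sentence introducing it reads ``The following facts were shown in \cite{C}.'' So there is no argument in the present paper to compare yours against. What follows is an assessment of your sketch on its own terms.

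Parts (i) and (iii) and the base and limit cases of (ii) are fine as you describe them.

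The successor case of (ii) is where the content lies, and your description has two gaps. First, the step ``apply (i) to the relevant sub-towers beneath to kill that last dependence'' is only a legitimate use of (i) if you have a \emph{single} finite partition of the maximal nodes of the sub-tower below a given $t_V\in MAX(V)$. The coloring $t'\mapsto f(s,t_V\cat t')$ depends on $s$, and (i) does not handle infinitely many partitions at once. The missing observation is that the set $\{s:s\preceq t_V\}$ is a finite chain of length $|t_V|$, so one may bundle these into the single finite partition of $MAX$ by the tuple $\bigl(f(s,t_V\cat t')\bigr)_{s\preceq t_V}\in S^{|t_V|}$ and apply (i) once. You never state this, and without it the step does not go through.

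Second, your homogenization of the colors $c_V$ across level-$\ell$ units invokes (i) on $[\ttt_{\omega^\xi},\ttt_{\ell-1}]$ ``transferred from $\ttt_{\omega^\xi(\ell-1)}$.'' But the extended monotone self-map that (i) produces on $\ttt_{\omega^\xi(\ell-1)}$ need not, after transfer, respect the level decomposition of $[\ttt_{\omega^\xi},\ttt_{\ell-1}]$. Once that structure is lost, ``level $\ell-1$'' is no longer meaningful and the next iteration of your process cannot begin. Your closing paragraph anticipates bookkeeping trouble here, but the difficulty is structural, not merely notational.

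Both problems are avoided by recasting the argument as an induction on $k$ rather than an iteration over levels: for each $u\in MAX([\ttt_{\omega^\xi},\ttt_{k-1}])$ apply (ii) at $\xi$ to the bottom unit $U_u$ with the enriched coloring
\[
(s',t')\ \longmapsto\ \Bigl(f(u\cat s',\,u\cat t'),\ \bigl(f(s,\,u\cat t')\bigr)_{s\preceq u}\Bigr)\in S\times S^{|u|},
\]
obtaining constants $c_k^u$ and $(d_s^u)_{s\preceq u}$; then set $g(s,u)=(d_s^u,c_k^u)$ and apply the inductive hypothesis for $k-1$ to $g$. This produces $c_1,\dots,c_k$ in one stroke and never requires a level-preserving form of (i). Your derivations of (iv) and of (ii) at $\xi+1$ from this ``level-only'' lemma are then correct as written.
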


We deduce the following, equivalent to Proposition \ref{recall}$(iv)$.

\begin{proposition} For any $\xi\in \emph{\ord}$, $n\in \nn$, and $$f:\{(s,t)\in [\ttt_{\omega^\xi}, \ttt_{2n-1}]\times MAX([\ttt_{\omega^\xi}, \ttt_{2n-1}]): s\preceq t\} \to 2,$$ there exists an extended monotone $(\theta,e)$ of $[\ttt_{\omega^\xi}, \ttt_n]$ into $[\ttt_{\omega^\xi}, \ttt_{2n-1}]$ and $\ee\in 2$ such that for all $s\preceq t\in MAX([\ttt_{\omega^\xi}, \ttt_{2n-1}])$, $f(\theta(s), e(t))=\ee$.    

\label{replacement prop}
\end{proposition}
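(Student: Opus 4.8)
The strategy is to transfer Proposition \ref{recall}$(iv)$ across the order isomorphisms relating $[\ttt_{\omega^\xi}, \ttt_m]$ to $\ttt_{\omega^\xi m}$ described in the paragraphs preceding this statement. Recall that $o(\ttt_\zeta) = \zeta$, and that by the multiplication formula $o([T_0,T_1]) = o(T_0)o(T_1)$ we have $o([\ttt_{\omega^\xi}, \ttt_m]) = \omega^\xi m$; in particular $[\ttt_{\omega^\xi}, \ttt_m]$ and $\ttt_{\omega^\xi m}$ have the same order. By the remark following Proposition 5.1 (the characterization via monotone maps, see \cite{Ke}), or directly from Proposition 5.1 itself, there exist monotone maps in both directions between these two $B$-trees. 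What I need, however, is slightly more: extended monotone maps, i.e.\ maps that also carry maximal nodes to maximal nodes compatibly. Since both $B$-trees are non-empty and well-founded, any monotone map into one of them extends to an extended monotone map (this is the observation recorded just before the tree-multiplication construction), so promoting monotone maps to extended monotone maps is free.

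First I would fix monotone maps $\varphi_1 : [\ttt_{\omega^\xi}, \ttt_{2n-1}] \to \ttt_{\omega^\xi(2n-1)}$ and $\varphi_2 : \ttt_{\omega^\xi n} \to [\ttt_{\omega^\xi}, \ttt_n]$, and extend each to extended monotone maps $(\varphi_1, e_1)$ and $(\varphi_2, e_2)$. Given the coloring $f$ on pairs $s \preceq t$ from $[\ttt_{\omega^\xi}, \ttt_{2n-1}]$, I pull it back along $(\varphi_1, e_1)$ to obtain a coloring $$g(s,t) = f(\varphi_1(s), e_1(t)), \qquad s \preceq t \in MAX(\ttt_{\omega^\xi(2n-1)}),$$ noting that $\varphi_1$ monotone and $e_1$ an extension guarantees $\varphi_1(s) \preceq e_1(t)$, so $g$ is well-defined on the appropriate set of comparable pairs. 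Here one should check the routine point that a monotone map sending $s \preceq t$ need not send $t$ to a maximal node, which is exactly why the extension $e_1$ is needed; this is the only mild subtlety. Then Proposition \ref{recall}$(iv)$ applied to $g$ produces $\ee \in 2$ and an extended monotone map $(\psi, e_\psi) : \ttt_{\omega^\xi n} \to \ttt_{\omega^\xi(2n-1)}$ with $g(\psi(s), e_\psi(t)) = \ee$ for all $s \preceq t \in MAX(\ttt_{\omega^\xi n})$.

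Finally I would compose: set $\theta = \varphi_1 \circ \psi \circ \varphi_2'$ where $\varphi_2'$ is a monotone map $[\ttt_{\omega^\xi}, \ttt_n] \to \ttt_{\omega^\xi n}$ (again available by Proposition 5.1 / \cite{Ke}), and take the corresponding extension $e$ on maximal nodes built from $e_1, e_\psi$ and the extension of $\varphi_2'$. Tracking a comparable pair $s \preceq t$ in $MAX([\ttt_{\omega^\xi}, \ttt_n])$ through the composition, one gets $f(\theta(s), e(t)) = g(\psi(\varphi_2'(s)), e_\psi(\cdots)) = \ee$, as required; the equivalence with Proposition \ref{recall}$(iv)$ claimed in the statement is precisely this round trip. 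The composition of monotone maps is monotone, and the composition of extensions is an extension, so $(\theta, e)$ is genuinely an extended monotone map of $[\ttt_{\omega^\xi}, \ttt_n]$ into $[\ttt_{\omega^\xi}, \ttt_{2n-1}]$. The main obstacle, such as it is, is purely bookkeeping: making sure that at each stage the intermediate maps respect the $s \preceq t$ relation and that the extension data compose correctly, so that the final pair lands on a comparable pair of maximal nodes of $[\ttt_{\omega^\xi}, \ttt_{2n-1}]$; there is no new combinatorial content beyond Proposition \ref{recall}$(iv)$ and the order-isomorphism/monotone-map dictionary already set up.
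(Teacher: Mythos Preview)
Your approach is correct and essentially identical to the paper's: pull $f$ back to $\ttt_{\omega^\xi(2n-1)}$ via an extended monotone map, apply Proposition~\ref{recall}(iv), and push forward via an extended monotone map from $[\ttt_{\omega^\xi},\ttt_n]$ to $\ttt_{\omega^\xi n}$, taking $(\theta,e)$ to be the composite. The only slip is that you have stated the directions of $\varphi_1$ and $\varphi_2$ backwards---for $g(s,t)=f(\varphi_1(s),e_1(t))$ with $s\in\ttt_{\omega^\xi(2n-1)}$ and $f$ defined on $[\ttt_{\omega^\xi},\ttt_{2n-1}]$ you need $\varphi_1:\ttt_{\omega^\xi(2n-1)}\to[\ttt_{\omega^\xi},\ttt_{2n-1}]$ (and indeed your $\varphi_2$ is never used, being replaced by $\varphi_2'$ going the right way); with those arrows reversed your write-up matches the paper's proof exactly.
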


\begin{proof} Fix extended monotone maps $(\theta_1, e_1):\ttt_{\omega^\xi (2n-1)}\to [\ttt_{\omega^\xi}, \ttt_{2n-1}]$ and $(\theta_2,e_2):[\ttt_{\omega^\xi}, \ttt_n]\to \ttt_{\omega^\xi n}$.  Let $f':\{(s,t)\in \ttt_{\omega^\xi (2n-1)}\times MAX(\ttt_{\omega^\xi (2n-1)}): s\preceq t\}\to 2$ be given by $f'(s,t)=f(\theta_1(s), e_1(t))$.  By Proposition \ref{replacement prop}, we can fix an extended monotone $(\theta',e'):\ttt_{\omega^\xi n}\to \ttt_{\omega^\xi (2n-1)}$ and $\ee\in 2$ so that $f'(\theta'(s), e'(t))=\ee$ for all $s\preceq t\in MAX(\ttt_{\omega^\xi (2n-1)})$.  Let $(\theta,e)=( \theta_1\circ\theta'\circ\theta_2, e_1\circ e'\circ e_2)$.

\end{proof}

\subsection{The main lemma}

Suppose $T$ is a $B$-tree.  Define $\Lambda(T)=\{(s,t)\in T\times T: s\prec t\}$.  The goal of this subsection is to discuss such colorings when $S$ is finite and how to find ``large subtrees'' $T_0$ of $T$ such that $f|_{\Lambda(T_0)}$ is constant. To that end, we have the following result. 

\begin{lemma} For any ordinal $\xi$, any $B$-tree $T$ with $o(T)\geqslant \omega^\xi$, and any function $f:\Lambda(T)\to 2$, there exist ordinals $\xi_0$ and $\xi_1$ such that $\xi_0\oplus \xi_1=\xi$, $B$ trees $T_0$, $T_1$ with $o(T_0)=\omega^{\xi_0}$, $o(T_1)=\omega^{\xi_1}$, and for $\ee=0,1$, there exist monotone maps $\theta_\ee:T_\ee\to T$ so that for all $(s,t)\in \Lambda(T_\ee)$, $f(\theta_\ee(s), \theta_\ee(t))=\ee$.  

\label{combinatorial1}
\end{lemma}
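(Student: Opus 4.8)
The plan is to run an induction on $\xi$, but the cleanest route is to reduce to the case $T = \ttt_{\omega^\xi}$ and then induct using the Cantor normal form of $\xi$ together with the tree-multiplication machinery of Section 5.1. First I would observe that since $o(T)\geqslant \omega^\xi$, there is a monotone map $\ttt_{\omega^\xi}\to T$, so it suffices to prove the statement for $T=\ttt_{\omega^\xi}$ (composing the resulting $\theta_\ee$ with this map preserves both monotonicity and the color condition). So the real task is: given $f:\Lambda(\ttt_{\omega^\xi})\to 2$, produce $\xi_0\oplus\xi_1=\xi$ and monotone maps $\theta_0:\ttt_{\omega^{\xi_0}}\to\ttt_{\omega^\xi}$, $\theta_1:\ttt_{\omega^{\xi_1}}\to\ttt_{\omega^\xi}$, each landing in an $f$-monochromatic part of the respective color.

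Next I would treat the base-type case $\xi=0$ first ($o(T)\geqslant 1$ means $T\neq\varnothing$; take $T_0=T_1$ a single node, $\xi_0=\xi_1=0$), and then handle the successor step and limit step via the replacement trees $[\ttt_{\omega^\zeta},\ttt_k]$. The key idea is a two-color pigeonhole on ``units'': write $\omega^\xi$ (when $\xi=\zeta+1$) as the order of $[\ttt_{\omega^\zeta},\ttt_\omega]$, viewed as $\omega$-many stacked copies (units) each isomorphic to $\ttt_{\omega^\zeta}$. On each unit $f$ restricts to a coloring of the $\Lambda$ of a copy of $\ttt_{\omega^\zeta}$; by the inductive hypothesis applied inside that unit, that unit contains a monochromatic monotone image of some $\ttt_{\omega^{\zeta_0}}$ or $\ttt_{\omega^{\zeta_1}}$ with $\zeta_0\oplus\zeta_1=\zeta$. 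Since there are infinitely many units and only finitely many possible data $(\ee,\zeta_0,\zeta_1)$ would be too much to hope to stabilize directly — so instead I would use the cross-unit coloring: color a pair $s\prec t$ lying in different units by $\ee$ if the ``later'' unit's witness has color $\ee$, and apply Proposition \ref{recall}$(iv)$ / Proposition \ref{replacement prop} to the induced $2$-coloring to extract a sub-replacement-tree $[\ttt_{\omega^\zeta},\ttt_n]$ all of whose units witness the same color $\ee$. Stacking those $n$ monochromatic copies of $\ttt_{\omega^{\zeta_\ee}}$ along $\ttt_n$ gives a monotone image of $\ttt_{\omega^{\zeta_\ee}\cdot n}$, hence (since $\omega^{\zeta_\ee}n\leqslant \omega^{\zeta_\ee+1}\leqslant\omega^{\zeta+1}=\omega^\xi$) of $\ttt_{\omega^{\zeta_\ee}}$; but more importantly one gets the ``large'' side of color $\ee$ with index $\omega^{\zeta_\ee+1}=\omega^{(\zeta+1)-\zeta_{1-\ee}}$, while the other color keeps index $\omega^{\zeta_{1-\ee}}$ inside a single unit, and $(\zeta_\ee+1)\oplus\zeta_{1-\ee}=\zeta+1=\xi$ as required. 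The limit case $\xi$ limit is easier: $\ttt_{\omega^\xi}$ is a totally incomparable union of the $\ttt_{\omega^{\eta}}$ over $\eta<\xi$ cofinally, apply the inductive hypothesis on each piece, pigeonhole on the color of the ``large'' output over a cofinal set of $\eta$, and since $\sup_\eta \omega^{\eta_0}=\omega^\xi$ or $\sup_\eta\omega^{\eta_1}=\omega^\xi$ for one fixed color, take $\xi_\ee=\xi$, $\xi_{1-\ee}=0$.

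I expect the main obstacle to be bookkeeping the Hessenberg-sum arithmetic so that it comes out exactly $\xi_0\oplus\xi_1=\xi$ at each inductive step rather than merely $\leqslant$, and in particular handling the case where both colors appear robustly — the trick there is that the pigeonhole is applied to pairs $(s,t)$ with $s\prec t$ crossing between units and colored by the witness color of the unit containing $t$, so the Ramsey-type statement Proposition \ref{replacement prop} genuinely applies and forces a single color $\ee$ on a large replacement subtree; the ``small'' color $1-\ee$ then survives only locally, giving index $\omega^{\zeta_{1-\ee}}$, which is exactly what the target decomposition $(\zeta_\ee+1)\oplus\zeta_{1-\ee}=\xi$ demands. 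A secondary subtlety is verifying that monotone maps compose correctly through the canonical order-isomorphisms identifying units of $[\ttt_{\omega^\zeta},\ttt_k]$ with $\ttt_{\omega^\zeta}$, and that the definition of the auxiliary coloring $f'$ on $\Lambda(\ttt_{\omega^\xi(2n-1)})$ is well-defined (only depends on the data it should) so that Proposition \ref{recall}$(iv)$ can be invoked; these are routine given the detailed description of $[\ttt_\xi,\ttt_k]$ and its units already set up in the excerpt.
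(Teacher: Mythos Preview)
Your overall induction scheme and the reduction to $T=\ttt_{\omega^\xi}$ match the paper, but there are two genuine gaps.

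\textbf{Limit case.} Your claim that one can always take $\xi_\ee=\xi$ and $\xi_{1-\ee}=0$ is false when $\xi$ is a limit ordinal that is not Hessenberg-indecomposable. Take $\xi=\omega\cdot 2$. For each $\eta=\omega+n<\xi$ the inductive hypothesis may return the decomposition $\eta_0=n$, $\eta_1=\omega$ (with $n\oplus\omega=\omega+n=\eta$). Then $\sup_\eta\eta_0=\omega$ and $\sup_\eta\eta_1=\omega$, so neither color reaches $\xi$; the correct output here is $\xi_0=\xi_1=\omega$. The paper handles exactly this by invoking a Hessenberg-sum pigeonhole result (Proposition 2.5 of \cite{C}): given the family $(\zeta_0,\zeta_1)_\zeta$ with $\zeta_0\oplus\zeta_1=\zeta+1$, one extracts a set $A$ and a pair $\xi_0\oplus\xi_1=\xi$ with $\sup_{\zeta\in A}\zeta_\ee=\xi_\ee$ and $\min_{\zeta\in A}\zeta_{1-\ee}\geqslant\xi_{1-\ee}$. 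This is precisely the ``bookkeeping'' you flagged as the expected obstacle but then did not carry out.

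\textbf{Successor case.} The auxiliary $2$-coloring you propose --- coloring a cross-unit pair $s\prec t$ by the ``witness color'' of the unit containing $t$ --- does not control $f(s,t)$ at all. Even after you extract a replacement subtree in which every unit carries an $\ee$-monochromatic copy of $\ttt_{\omega^{\zeta_\ee}}$, stacking these along $\ttt_n$ does \emph{not} produce an $f$-monochromatic image of $[\ttt_{\omega^{\zeta_\ee}},\ttt_n]$, because $f$ on pairs lying in different units is completely unconstrained by your coloring. The paper separates the two stabilizations: Proposition \ref{long one}(i) stabilizes the \emph{original} coloring $f$ on cross-level pairs (this is where Proposition \ref{replacement prop} is actually used), and Proposition \ref{long one}(ii) separately stabilizes the finite data $(\zeta_0,\zeta_1)\in S$ across units (the set $S=\{(\gamma_0,\gamma_1):\gamma_0\oplus\gamma_1=\zeta\}$ is finite, so this is a genuine pigeonhole --- contrary to your remark that one cannot ``stabilize directly''). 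Only after both stabilizations can the stacked copies be glued into an $f$-monochromatic image. Your sketch conflates these two steps and applies the Ramsey-type proposition to the wrong coloring.
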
 

The fact that the only pairs $(\beta, \gamma)$ with $\beta\oplus \gamma=\omega^\xi$ are $(0, \omega^\xi)$ and $(\omega^\xi, 0)$, together with the usual method of reducing colorings to colorings with strictly fewer colors yields the following.  

\begin{corollary} If $o(T)\geqslant \omega^{\omega^\xi}$, if $S$ is any finite set, and if $f:\Lambda(T)\to S$ is any function, then there exists a $B$-tree $T_0$ with $o(T_0)=\omega^{\omega^\xi}$, a monotone map $\theta:T_0\to T$, and $x\in S$ so that $f(\theta(s), \theta(t))= x$ for all $(s,t)\in \Lambda(T)$.  
\label{main corollary}
\end{corollary}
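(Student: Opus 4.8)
The plan is to derive Corollary \ref{main corollary} from Lemma \ref{combinatorial1} by a standard induction on $|S|$, peeling off one color at a time, while keeping track of how the exponent $\omega^\xi$ degrades at each step and observing that it does not degrade at all thanks to the rigidity of the Hessenberg decomposition of $\omega^\xi$.

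First I would record the base case: if $|S|=1$ the statement is trivial (take $T_0$ to be any $B$-tree of order exactly $\omega^{\omega^\xi}$ sitting inside $T$ via a monotone map, which exists since $o(T)\geqslant \omega^{\omega^\xi}$ by the cited fact that monotone maps between $B$-trees exist exactly when the orders are comparable). For the inductive step, suppose $|S|=k+1$ and fix $x_0\in S$; let $S'=S\setminus\{x_0\}$ and define $g:\Lambda(T)\to 2$ by $g(s,t)=0$ if $f(s,t)=x_0$ and $g(s,t)=1$ otherwise. Since $o(T)\geqslant \omega^{\omega^\xi}$, apply Lemma \ref{combinatorial1} with the ordinal $\omega^\xi$ in place of $\xi$: we obtain ordinals $\xi_0,\xi_1$ with $\xi_0\oplus\xi_1=\omega^\xi$, $B$-trees $T_0,T_1$ with $o(T_\ee)=\omega^{\xi_\ee}$, and monotone maps $\theta_\ee:T_\ee\to T$ with $g(\theta_\ee(s),\theta_\ee(t))=\ee$ on $\Lambda(T_\ee)$.

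Now comes the point the corollary's preamble is flagging: the only way to write $\omega^\xi=\xi_0\oplus\xi_1$ is $(\xi_0,\xi_1)=(0,\omega^\xi)$ or $(\omega^\xi,0)$, because the Hessenberg sum adds Cantor normal forms componentwise and $\omega^\xi$ has the single-term Cantor normal form $\omega^\xi\cdot 1$. If $\xi_0=\omega^\xi$, then $o(T_0)=\omega^{\omega^\xi}$ and every pair in $\Lambda(T_0)$ has $g$-value $0$, i.e. $f(\theta_0(s),\theta_0(t))=x_0$ for all $(s,t)\in\Lambda(T_0)$, so we are done with $x=x_0$. Otherwise $\xi_1=\omega^\xi$, so $o(T_1)=\omega^{\omega^\xi}$ and $f$ restricted to $\theta_1(\Lambda(T_1))$ takes values only in $S'$; composing, the function $f\circ(\theta_1\times\theta_1):\Lambda(T_1)\to S'$ has range in a set of size $k$, and $o(T_1)=\omega^{\omega^\xi}\geqslant\omega^{\omega^\xi}$, so the inductive hypothesis applied to $T_1$ yields a $B$-tree $T_0'$ with $o(T_0')=\omega^{\omega^\xi}$, a monotone map $\psi:T_0'\to T_1$, and $x\in S'\subset S$ with $f(\theta_1(\psi(s)),\theta_1(\psi(t)))=x$ on $\Lambda(T_0')$; then $\theta=\theta_1\circ\psi$ works.

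The only subtlety worth double-checking — and the one place I'd be careful — is the claim that $\xi_0\oplus\xi_1=\omega^\xi$ forces one of the summands to be $0$; this is immediate from uniqueness of Cantor normal form once one notes that a nonzero ordinal $\beta$ with $\beta\leqslant\omega^\xi$ and $\beta\oplus\gamma=\omega^\xi$ for some $\gamma$ must have leading term $\omega^\xi\cdot m$ with $m\geqslant 1$ in the common-refinement expansion, leaving no room for a nonzero $\gamma$. Everything else is bookkeeping: monotone maps compose to monotone maps, and orders of the intermediate $B$-trees stay exactly $\omega^{\omega^\xi}$ at every stage, so no loss accumulates over the $k$ steps of the induction.
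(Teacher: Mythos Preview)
Your proposal is correct and follows essentially the same approach as the paper: both use Lemma~\ref{combinatorial1} applied with $\omega^\xi$ in place of $\xi$, invoke the Hessenberg indecomposability of $\omega^\xi$ to get a monochromatic $B$-tree of full order $\omega^{\omega^\xi}$ in the two-color case, and then induct on the size of $S$. The only cosmetic difference is that the paper organizes the induction by halving $S$ at each step (inducting on $k$ with $|S|\leqslant 2^k$), while you peel off one color at a time; both reductions are standard and equivalent here.
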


\begin{proof} The statement preceding the corollary yields the result if $|S|\leqslant 2$.  Assume we have the result for all finite sets $S'$ with $|S'|\leqslant 2^k$. Fix a set $S$ with $|S|\leqslant 2^{k+1}$ and $f:T\to S$.  Let $S_0$, $S_1$ be a partition of $S$ into two subsets each with cardinality not exceeding $2^k$.  Define $g(s,t)=0$ if $f(s,t)\in S_0$ and $g(s,t)=1$ if $f(s,t)\in S_1$.  Then choose a $B$ tree $T_0$ with $o(T_0)=\omega^{\omega^\xi}$, $\ee\in 2$, and a monotone map $\theta:T_0\to T$ so that $g(\theta(s), \theta(t))=\ee$ for all $(s,t)\in \Lambda(T_0)$.  Next, define $f':\Lambda(T_0)\to S_\ee$ by $f'(s,t)=f(\theta(s), \theta(t))$.  Apply the inductive hypothesis to obtain $T_1$ with $o(T_1)=\omega^{\omega^\xi}$, an $x\in S_\ee$, and a monotone map $\theta':\Lambda(T_1)\to S_\ee$ so that $f'(\theta'(s), \theta'(t))=x$ for all $(s,t)\in \Lambda(T_1)$.  Then $x$, $T_1$, and $\theta\circ \theta':T_1\to T$ satisfy the conclusions.

\end{proof}

\begin{remark} Recall that if $T_0$ is any $B$-tree with $o(T_0)\geqslant \xi>0$, there exists a monotone map $\theta:\ttt_\xi\to T_0$.  Thus if $\ee\in 2$, $f:\Lambda(T)\to 2$, and $\theta_0:T_0\to T$ are such that $f(\theta_0(s), \theta_0(t))=\ee$ for all $(s,t)\in \Lambda(T_0)$, then $f(\theta_0\circ \theta(s), \theta_0\circ \theta(t))=\ee$.  Thus in the conclusion of Lemma \ref{combinatorial1}, we may assume $T_0=\ttt_{\omega^{\xi_0}}$, and $T_1=\ttt_{\omega^{\xi_1}}$.

Moreover, if $f:T\to 2$ is such that $o(T)\geqslant \omega^\xi$, as in the hypothesis of Lemma \ref{combinatorial1}, we may first take a monotone map $\theta:\ttt_{\omega^\xi}\to T$ and replace $f:\Lambda(T)\to 2$ with $f':\Lambda(\ttt_{\omega^\xi})\to 2$ given by $f'(s,t)=f(\theta(s), \theta(t))$.  Then if we can find $T_0, T_1$, and monotone maps $\theta_0:T_0\to \ttt_{\omega^\xi}$ and $\theta_1:T_1\to\ttt_{\omega^\xi}$ as in $(ii)$ of the conclusion, then $\theta\circ \theta_0:T_0\to T$ and $\theta\circ \theta_1:T_1\to T$ satisfy the conclusion.  Thus the lemma holds if and only if it holds when $T=\ttt_{\omega^\xi}$.  

We will implicitly use these facts throughout.  It will be very convenient, however, to allow the trees $T$, $T_0$, and $T_1$ to be other trees besides $\ttt_\xi$ for some $\xi$, so we do not state the lemma in this way.  

\end{remark}

\begin{remark} The base case of Lemma \ref{combinatorial1} is equivalent to the finite Ramsey theorem from \cite{Ramsey}: For any $n\in \nn$, there exists $N=N(n)\in \nn$ so that for any $N\leqslant M\in \nn$ and any function $f:\{(i,j): 1\leqslant i<j\leqslant M\}\to 2$, there exist $1\leqslant p_1<\ldots <p_n\leqslant M$ and $\ee\in 2$ so that $f(p_i, p_j)=\ee$ for all $1\leqslant i\leqslant j$.   We simply note that $\ttt_M$ is order isomorphic to $\{1, \ldots, M\}$, and so there is a natural bijection between $\Lambda(\ttt_M)$ and $\{(i,j):1\leqslant i<j\leqslant M\}$.  If $f:\ttt_\omega\to 2$ is any function, then for each $n\in \nn$, we find a monotone map $\theta_n:\ttt_n\to \ttt_{N(n)}$ and $\ee_n$ so that $f(\theta_n(t_i), \theta_n(t_j))=\ee_n$ for each $1\leqslant i<j\leqslant n$, where $\ttt_n=\{t_1, \ldots, t_n\}$, $t_1\prec \ldots \prec t_n$.  We then choose $n_1<n_2<\ldots$ and $\ee\in 2$ so that $\ee_{n_i}=\ee$ for all $i\in \nn$, and let $T_\ee=\cup_i \ttt_{n_i}$.  Then the monotone map $\theta_\ee$ from $T_\ee$ to $T$ is given by $\theta_\ee|_{\ttt_{n_i}}=\theta_{n_i}$. Then $o(T_\ee)=\omega^1$, and $\xi_\ee=1$ we set.  The monotone map $\theta_{1-\ee}:\ttt_1\to \ttt_\omega$ given by mapping the unique member of $\ttt_1$ to any member of $\ttt_\omega$ vacuously statisfies the condition required of it, since $\Lambda(\ttt_1)$ is empty.

Moreover, the ill-founded analogue of Lemma \ref{combinatorial1} is just the infinite Ramsey theorem.  The ill-founded analogue would be that if $f:\Lambda(T)\to 2$ is any function, where $T$ is ill-founded, then there exists $\ee\in 2$, an ill-founded tree $T_0$, and a monotone map $\theta:T_0\to T$ so that $f(\theta(s), \theta(t))=\ee$ for all $(s,t)\in \Lambda(T_0)$.  This is precisely equivalent to the statement: For any function $f:\{(i,j)\in \nn\times \nn: i<j\}\to 2$, there exists $\ee\in 2$ and natural numbers $m_1<m_2<\ldots$ so that $f(m_i, m_j)=\ee$ for all $1\leqslant i<j$.  This is because a tree is ill-founded if and only if there exists $(t_i)_{i=1}^\infty\subset T$ so that $t_1\prec t_2\prec \ldots$.  Then $f'(i,j)=f(t_i, t_j)$ defines a $2$ coloring of $\{(i,j): i<j\}$, and we may define a monotone map from $T_0=\{(1, \ldots, n): n\in \nn\}$ by $\theta((1, \ldots, n))= t_{m_n}$, where $m_1<m_2<\ldots$ is such that $f'(m_i, m_j)=\ee$ for all $i<j$.

\end{remark}

\begin{rem}
\upshape
We will prove Lemma \ref{combinatorial1} by induction on $\xi$.  We have already argued the base case. We have already noted that if $\zeta=0$, the existence of a monotone $\theta:\ttt_{\omega^\xi}\to T$ so that $f(\theta(s), \theta(t))$ is constant on $\Lambda(\ttt_{\omega^\xi})=\Lambda(\ttt_1)=\varnothing$ is trivial.  For $\zeta>0$, the existence of $\ee$, $T_\ee$ with $o(T_\ee)=\omega^{\xi_\ee}$, and a monotone map $\theta_\ee:T_\ee\to T$ satisfying the conclusions of Lemma \ref{combinatorial1} is equivalent to the existence of a subset $A$ of $[0, \omega^{\xi_\ee})$ with $\sup A=\omega^{\xi_\ee}$ and for each $\zeta\in A$ the existence of $T_{\ee, \zeta}$ with $o(T_{\ee, \zeta})=\zeta$ and a monotone map $\theta_{\ee, \zeta}:T_{\ee, \zeta}\to T$ so that $f(\theta_{\ee, \zeta}(s), \theta_{\ee,\zeta}(t))=\ee$ for all $(s,t)\in \Lambda(T_{\ee, \zeta})$.  The tree $T_\ee$ can then be taken to be a totally incomparable union of the $B$-trees $T_{\ee, \zeta}$ (or, formally, $B$-trees $\tilde{T}_{\ee, \zeta}$ which are order isomorphic to $T_{\ee, \zeta}$ but made to be totally incomparable), the map $\theta$ will be equal to $\theta_{\ee, \zeta}$ when restricted to $T_{\ee, \zeta}$, and $o(T_0)=\sup_{\zeta\in A}o(T_{\ee,\zeta})=\omega^{\xi_\ee}$.  This fact will be prevalent, so we isolate it to avoid repetition during our proofs.   
\label{useful remark}
\end{rem}

The proof of the limit ordinal case is quite easy, with a fact from \cite{C} concerning Hessenberg sums.  Assuming the result for each $\zeta<\xi$, $\xi$ a limit ordinal, and fixing a $B$-tree $T$ with $o(T)\geqslant \omega^\xi$ and a function $f:\Lambda(T)\to 2$, we note that $o(T)> \omega^{\zeta+1}$ for each $\zeta<\xi$.  By the inductive hypothesis, for each $\zeta<\xi$, there exist ordinals $\zeta_0$ and $\zeta_1$ with $\zeta_0\oplus \zeta_1=\zeta+1$, $B$-trees $T_{0, \zeta}$ and $T_{1, \zeta}$ so that $o(T_{0, \zeta})=\omega^{\zeta_0}$ and $o(T_{1, \zeta})=\omega^{\zeta_1}$, and for $\ee=0,1$, monotone maps $\theta_{\ee, \zeta}:T_{\ee, \zeta}\to T$ so that $f(\theta_{\ee, \zeta}(s), \theta_{\ee, \zeta}(t))=\ee$ for all $(s,t)\in \Lambda(T_{\ee, \zeta})$.  Then by \cite[Proposition 2.5]{C}, there exist a set $A\subset [0, \omega^\xi)$ and $\xi_0$, $\xi_1$ so that $\xi_0\oplus \xi_1=\xi$ and for some $\ee\in 2$, $\sup_{\zeta\in A}\zeta_\ee = \xi_\ee$ and $\min_{\zeta\in A} \zeta_{1-\ee}\geqslant \xi_{1-\ee}$.  Assume for convenience that $\ee=0$.  Then by Remark \ref{useful remark}, $(T_{0, \zeta})_{\zeta\in A}$ and $(\theta_{0, \zeta})_{\zeta\in A}$ guarantee the existence of the desired $T_0$ with $o(T_0)=\omega^{\xi_0}$ and $\theta_0:T_0\to T$.  Moreover, we may take $T_1$ to be $\ttt_{\omega^{\xi_1}}$.  Fix any $\zeta\in A$ and a monotone $\theta':\ttt_{\omega^{\xi_1}}\to T_{1, \zeta}$, which we may do since $o(T_{1, \zeta})\geqslant \omega^{\xi_1}$.  Then $\theta_1:\ttt_{\omega^{\xi_1}}\to T$ can be given by $\theta_{1, \zeta}\circ \theta'$.  This completes the limit ordinal case.

For the successor case, we will do some preliminary work.

\begin{proposition} Fix $\xi\in \emph{\ord}$, $k,n\in \nn$, and a finite set $S$. Fix $f:\Lambda([\ttt_{\omega^\xi}, \ttt_k])\to 2$ and a function $g:[\ttt_{\omega^\xi}, \ttt_k]\to S$ such that if $s$ and $t$ lie in the same unit of $[\ttt_{\omega^\xi}, \ttt_k]$, $g(s)=g(t)$.  \begin{enumerate}[(i)]\item If $k=2^{2n-2}$, there exists a monotone map $\theta_1:[\ttt_{\omega^\xi}, \ttt_n]\to [\ttt_{\omega^\xi}, \ttt_k]$ and $\ee\in 2$ such that if $(s,t)\in \Lambda([\ttt_{\omega^\xi}, \ttt_n])$, and $s,t$ lie on different levels, $f(\theta_1(s), \theta_1(t))=\ee$.  \item If $k=n|S|$, there exists a monotone map $\theta_2:[\ttt_{\omega^\xi}, \ttt_n]\to [\ttt_{\omega^\xi}, \ttt_k]$ and $x\in S$ so that $g\circ \theta_2\equiv x$, and $\theta_2$ restricted to a unit of $[\ttt_{\omega^\xi}, \ttt_n]$ is an order isomorphism with a unit of $[\ttt_{\omega^\xi}, \ttt_k]$, and for some $1\leqslant l_1<\ldots<l_n\leqslant k$, $\theta_2$ maps the $i^{th}$ level of $[\ttt_{\omega^\xi}, \ttt_n]$ to the $l_i^{th}$ level of $[\ttt_{\omega^\xi}, \ttt_k]$.      \end{enumerate}

\label{long one}
\end{proposition}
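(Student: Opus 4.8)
The plan is to treat both parts as Ramsey statements for the replacement tree $[\ttt_{\omega^\xi},\ttt_k]$, whose ``level skeleton'' is nothing but the chain $\ttt_k$ of length $k$. The idea is first to absorb the two sources of inhomogeneity of the colourings --- the position of a point inside its unit (a copy of $\ttt_{\omega^\xi}$) and the choice of which unit one is in --- using the partition results for $\ttt_{\omega^\xi}$ recalled in Proposition~\ref{recall}, and then to run a finite pigeonhole or finite Ramsey argument on the $k$ levels. I will use freely the facts recorded just before the statement: below a maximal member of the $i$-th level there sits a canonical copy of $[\ttt_{\omega^\xi},\ttt_{k-i}]$; the first $j$ levels form a canonical copy of $[\ttt_{\omega^\xi},\ttt_j]$; each unit is canonically order isomorphic to $\ttt_{\omega^\xi}$; and two comparable members of the same level lie in the same unit.

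For $(i)$: if $s\prec t$ lie on distinct levels $a<b$, then the restriction of $t$ to level $a$ is a maximal member $\mu$ of the level-$a$ unit $V$ containing $s$, with $s\preceq\mu$, and $t$ itself is a member of the copy of $[\ttt_{\omega^\xi},\ttt_{k-a}]$ hanging beneath $\mu$. First I would use finitely many applications of the partition results of Proposition~\ref{recall} --- organised so as to homogenise, in turn, the position of $s$ in its unit (that branch together with $\mu$ is exactly a domain of the shape $\{(s,\mu):s\preceq\mu\in MAX(\ttt_{\omega^\xi})\}$, to which Proposition~\ref{recall}$(ii)$ applies), then the position of $t$ in its unit, and then, via a Ramsey argument on the (finite-height, infinitely branching when $\xi\geq 1$) tree of units --- in the spirit of the base case of Lemma~\ref{combinatorial1} --- the choice of the units $V\subset W$ themselves. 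This passes to an extended monotone self-map of $[\ttt_{\omega^\xi},\ttt_k]$ after which the value $f(\theta(s),\theta(t))$, for $s\prec t$ on distinct levels $a<b$, depends only on the pair $(a,b)$, i.e.\ is given by a genuine $2$-colouring of $\{(a,b):1\leq a<b\leq k\}$. Since $k=2^{2n-2}$ is at least the least number $N(n)$ such that every $2$-colouring of the pairs from an $N(n)$-element chain admits a monochromatic $n$-element subchain (recall the bound $N(n)\leq\binom{2n-2}{n-1}\leq 2^{2n-2}$), the finite Ramsey theorem of \cite{Ramsey} yields $1\leq l_1<\dots<l_n\leq k$ and $\ee\in 2$ with $f$-colour $\ee$ on every pair of distinct levels among $l_1,\dots,l_n$. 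Composing the homogenising maps with the canonical inclusion of $[\ttt_{\omega^\xi},\ttt_n]$ onto the levels $l_1<\dots<l_n$ of $[\ttt_{\omega^\xi},\ttt_k]$ gives the desired $\theta_1$.

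For $(ii)$: since $g$ is constant on units, it descends to an $S$-colouring $\bar g$ of the tree $U_k$ of units of $[\ttt_{\omega^\xi},\ttt_k]$, ordered by ``lies beneath''; $U_k$ has height $k$, its first level is a single node, and every non-leaf node has exactly $M:=|MAX(\ttt_{\omega^\xi})|$ children (so $M=1$ when $\xi=0$ and $M$ is infinite when $\xi\geq 1$, a tree of order $\geq\omega$ having infinitely many maximal members). A map $\theta_2$ of the required form is exactly an embedding of the analogous height-$n$ tree $U_n$ into $U_k$ which sends level $i$ into a single level $l_i$, sends siblings to incomparable nodes, and has monochromatic image of colour $x$. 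When $\xi=0$ this is the pigeonhole principle on a chain of length $k=n$. When $\xi\geq 1$, so $M$ is infinite, I would build the embedding top-down: commit to the root's colour $x_1$ and try to grow a monochromatic-$x_1$ copy of $U_n$ along levels $1,2,\dots$, at each step extending from a current leaf $v$ by choosing $M$ of its $x_1$-coloured children, using the infinitude of $M$ together with a pigeonhole over the finitely many possible level-selections and colours to keep all these choices synchronised. If the construction reaches level $n$ we are done; otherwise it first fails at some node $v$ on a level $j\leq n$, at which point $M$-many children of $v$ share a common colour different from $x_1$, and one recurses on these children --- the budget $k=n|S|$ being chosen precisely so that the height $k-j\geq n(|S|-1)$ remaining below the point of failure still suffices after a colour has been ruled out. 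A careful induction on $|S|$ (or on the height) of this kind should produce $x$, the levels $l_1<\dots<l_n$, and the embedding, hence $\theta_2$, and moreover by construction $\theta_2$ carries each unit of $[\ttt_{\omega^\xi},\ttt_n]$ isomorphically onto a unit of $[\ttt_{\omega^\xi},\ttt_k]$.

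I expect the main obstacle to be the bookkeeping that makes the two reductions legitimate. For $(i)$ the delicate point is that the colouring really does factor through the pair of levels: this requires peeling the within-unit and the between-unit structure in the correct order and verifying that Proposition~\ref{recall} applies, at each stage, to the domain one obtains. For $(ii)$ the delicate point is the simultaneous choice, across all $M$-many branches, of one common sequence of levels and one common colour --- which is exactly where the infinitude of $M$ (equivalently $\xi\geq 1$) is essential --- together with checking that the height consumed each time a colour is ruled out is at most $n$, so that the stated bound $k=n|S|$ is enough.
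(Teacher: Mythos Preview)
Your overall plan—absorb the within-unit structure via Proposition~\ref{recall}, then run a finite Ramsey/pigeonhole on the levels—is right in spirit, but both reductions are organised differently in the paper, and your versions have genuine gaps at exactly the points you yourself flag as ``delicate''.

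For $(i)$: the paper does \emph{not} reduce to a $2$-colouring of pairs of levels and then invoke the finite Ramsey bound $R(n,n)\leqslant 2^{2n-2}$. Instead it proves, by induction on $n$ with $k=2^{n-1}$, that there is a monotone $\theta':[\ttt_{\omega^\xi},\ttt_n]\to[\ttt_{\omega^\xi},\ttt_k]$ and a sequence $(\ee_j)_{j=1}^n\in 2^n$ so that for $s\prec t$ on levels $i<j$, $f(\theta'(s),\theta'(t))=\ee_j$ depends only on the \emph{deeper} level. The inductive step peels off the last level: for each maximal $t$ of level $2^n-1$ one applies Proposition~\ref{recall}$(iii)$ on the unit $U_t$ to stabilise $u\mapsto (f(t|_i,u))_{i=1}^{|t|}$, then applies the extended-monotone result Proposition~\ref{replacement prop} to the first $2^n-1$ levels to extract a single value $\ee_{n+1}$ while dropping to $[\ttt_{\omega^\xi},\ttt_{2^{n-1}}]$, on which the inductive hypothesis applies. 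With $m=2n-1$ this yields $(\ee_j)_{j=1}^{2n-1}$, and plain pigeonhole (not Ramsey) gives $n$ equal entries. Your stronger reduction—making the colour depend on \emph{both} levels via some self-map that preserves the level count—is not obviously possible; the step you call ``a Ramsey argument on the tree of units'' is essentially what the proposition is trying to prove, and you have not explained how Proposition~\ref{recall} alone gets you there, nor why the specific value $k=2^{2n-2}$ survives your unstated reduction and still dominates $R(n,n)$.

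For $(ii)$: the paper again argues by induction, this time on $k$, producing a level-preserving monotone self-map $\theta$ of $[\ttt_{\omega^\xi},\ttt_k]$ and a sequence $(x_i)_{i=1}^k\in S^k$ with $g(\theta(s))=x_i$ whenever $s$ is on level $i$; pigeonhole on the $n|S|$ entries then gives $n$ equal ones. The inductive step is: apply the hypothesis beneath each maximal $t$ of the first level to get $\theta_t$ and $(x_i^t)_{i=2}^{k+1}$, partition $MAX(\text{level }1)\cong MAX(\ttt_{\omega^\xi})$ by the value of $(x_i^t)$, and use Proposition~\ref{recall}$(i)$—not a bare cardinality pigeonhole—to pass to an extended monotone self-map of the first unit whose extension lands in a single part. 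Your top-down scheme does not address this: ``failure at a node $v$'' rules out a colour only below $v$, not globally, so different branches may eliminate different colours at different depths, and nothing forces a single sequence $l_1<\dots<l_n$ and a single $x$ across all of $MAX(\ttt_{\omega^\xi})$. The infinitude of $M$ gives you $M$ children of one colour below any fixed node, but it does not synchronise these choices across siblings; that synchronisation is precisely what Proposition~\ref{recall}$(i)$ supplies, and once you invoke it you are carrying out the paper's bottom-up induction rather than your greedy colour-elimination.
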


\begin{proof}$(i)$ We first claim that if $k=2^{n-1}$, there exists a monotone map $\theta':[\ttt_{\omega^\xi}, \ttt_n]\to [\ttt_{\omega^\xi}, \ttt_k]$ and $(\ee_j)_{j=1}^n\in 2^n$ so that if $(s,t)\in \Lambda([\ttt_{\omega^\xi}, \ttt_n])$, $s$ on level $i$, $t$ on level $j$, and $i<j$, $f(\theta'(s), \theta'(t))=\ee_j$.  Assume this claim.  Then if $f$ is as in $(i)$ with $k=2^{2n-2}$, let $m=2n-1$ so that $k=2^{m-1}$.  Then by the claim, there exists a monotone map $\theta':[\ttt_{\omega^\xi}, \ttt_m]\to 2$ and $(\ee_j)_{j=1}^m\in 2^m$ satisfying the conclusions of the claim.  Then we can choose $1\leqslant l_1<\ldots <l_n\leqslant m$ and $\ee\in 2$ so that $\ee=\ee_{l_j}$ for each $1\leqslant j\leqslant n$.  Let $U_\varnothing$ be the first unit of $[\ttt_{\omega^\xi}, \ttt_n]$ and let $\theta''$ be defined on $U_\varnothing$ by being an order isomorphism with any unit on the $l_1$ level of $[\ttt_{\omega^\xi}, \ttt_m]$.  Next, suppose $\theta''$ has been defined on the first $i$ levels of $[\ttt_{\omega^\xi}, \ttt_n]$ for some $i<n$ so that $\theta''$ maps the $j^{th}$ level of $[\ttt_{\omega^\xi}, \ttt_n]$ into the $l_j^{th}$ level of $[\ttt_{\omega^\xi}, \ttt_m]$. Fix $t$ maximal in the $i^{th}$ level of $[\ttt_{\omega^\xi}, \ttt_n]$ and let $U_t$ be the unit beneath $t$. Let $u$ be an extension of $\theta''(t)$ which is a maximal member of the $(l_{i+1}-1)^{st}$ level of $[\ttt_{\omega^\xi}, \ttt_k]$.   Let $\theta''|_{U_t}$ be an order isomorphism between $U_t$ and the unit $U_u$ beneath $u$.  This completes the recursive construction of $\theta'':[\ttt_{\omega^\xi}, \ttt_n]\to [\ttt_{\omega^\xi}, \ttt_m]$.  Then $\theta= \theta'\circ \theta''$ clearly satisfies the conclusion of $(i)$.  

We return to the proof of the claim at the beginning of the previous paragraph.  We prove the result by induction on $n$.  If $n=1$, the conclusion is vacuous when $\theta$ is the identity, since there is only one level. Assume the result holds for a given $n\in \nn$ and let $f:\Lambda([\ttt_{\omega^\xi}, \ttt_{2^n}])\to 2$ be as in the statement of the claim.  Let $T=[\ttt_{\omega^\xi}, \ttt_{2^n}]^{\omega^\xi}$, which is the first $2^n-1$ levels of $[\ttt_{\omega^\xi}, \ttt_{2^n}]$ and is naturally order isomorphic to $[\ttt_{\omega^\xi}, \ttt_{2^n-1}]$.  For $t\in MAX(T)$ and $U_t$ the unit beneath $t$, define $f_t:U_t\to 2^{|t|}$ by letting $f_t(u)=(f(t|_i, u))_{i=1}^{|t|}$. Using Proposition \ref{recall} and noting that $U_t$ can be identified with $\ttt_{\omega^\xi}$, we deduce the existence of $(\ee^t_i)_{i=1}^{|t|}\in 2^{|t|}$ and a monotone map $\theta_t:U_t\to U_t$ so that $(f(t|_i, \theta_t(u)))_{i=1}^{|t|}=(\ee_i^t)_{i=1}^{|t|}$ for all $u\in U_t$.  

Define $f':T\times MAX(T)\to 2$ by letting $f'(s,t)=\ee^t_{|s|}$ and note that $f'(s,t)=f(s, \theta_t(u))$ for any proper extension $u$ of $t$.    Using Proposition \ref{replacement prop} and recalling that $T$ is order isomorphic to $[\ttt_{\omega^\xi}, \ttt_{2^n-1}]$, we deduce the existence of a monotone map $\theta:[\ttt_{\omega^\xi}, \ttt_{2^{n-1}}]\to T$ and an extension map $e:MAX([\ttt_{\omega^\xi}, \ttt_{2^{n-1}}])\to MAX(T)$ of $\theta$ and $\ee_{n+1}\in 2$ so that $\ee_{n+1}=f'(\theta(s), e(t))$ for any $s\preceq t\in MAX([\ttt_{\omega^\xi}, \ttt_{2^{n-1}}])$.  Let $f'':\Lambda([\ttt_{\omega^\xi}, \ttt_{2^{n-1}}])\to 2$ be given by $f''(s,t)=f(\theta(s), \theta(t))$.  Applying the inductive hypothesis, we obtain $(\ee_i)_{i=1}^n\in 2^n$ and monotone map $\theta'':[\ttt_{\omega^\xi}, \ttt_n]\to [\ttt_{\omega^\xi}, \ttt_{2^{n-1}}]$ so that for $s\prec t$, $s$ in the $i^{th}$ level, $t$ on the $j^{th}$ level, and $i<j$, $f''(\theta''(s), \theta''(t))=\ee_j$.  We last define $\theta':[\ttt_{\omega^\xi}, \ttt_{n+1}]\to [\ttt_{\omega^\xi}, \ttt_{2^n}]$.  Let $\iota:[\ttt_{\omega^\xi}, \ttt_n]\to [\ttt_{\omega^\xi}, \ttt_{n+1}]$ be the natural order isomorphism between $[\ttt_{\omega^\xi}, \ttt_n]$ and the first $n$ levels of $[\ttt_{\omega^\xi}, \ttt_{n+1}]$.  Let $\theta'$ be defined on the first $n$ levels of $[\ttt_{\omega^\xi}, \ttt_{n+1}]$ by letting $\theta'= \theta\circ\theta''\circ \iota^{-1}$ on those levels. It is straightforward to verify that for $s\prec t$, $s$ on the $i^{th}$ level, $t$ on the $j^{th}$ level, $1\leqslant i<j\leqslant n$, $f(\theta'(s), \theta'(t))=\ee_j$.   Fix $t$ maximal in the $n^{th}$ level of $[\ttt_{\omega^\xi}, \ttt_{n+1}]$ and let $U_t$ be the unit beneath $t$. Let $U_{e(\theta''\circ \iota^{-1}(t))}$ be the unit beneath $e(\theta''\circ \iota^{-1}(t))$ and let $\iota_t$ be the natural order isomorphism between $U_t$ and $U_{e(\theta''\circ \iota^{-1}(t))}$.  Let $\theta|_{U_t}=\theta_{e(\theta''\circ \iota^{-1}(t))}\circ \iota_t$.   Fix $s\prec u$ with $s$ on the $i^{th}$ level for some $i\leqslant n$ and $u$ on the $(n+1)^{st}$ level.  Then there exists a unique $t$ which is a maximal member of the $n^{th}$ level of $[\ttt_{\omega^\xi}, \ttt_{n+1}]$ and such that $s\preceq t \prec u$.  Then by the properties of $(\theta, e)$ and the first sentence at the beginning of the paragraph (with $s$ replaced by $\theta''\circ \iota^{-1}(s)$, $t$ replaced by $e(\theta''\circ \iota^{-1}(t))$, and $u$ replaced by $\theta_{e(\theta''\circ \iota^{-1}(t))}\circ \iota_t(u)$),  $$f(\theta'(s), \theta'(u)) = f(\theta(\theta''\circ \iota^{-1}(s)), \theta_{e(\theta''\circ \iota^{-1}(t))}\circ \iota_t(u))= f'(\theta(\theta''\circ \iota^{-1}(s)), e(\theta''\circ \iota^{-1}(t)))=\ee_{n+1}.$$

$(ii)$ We first claim that for any $k$ and any function $g$ as in the statement of Proposition \ref{long one}, there exists a monotone map $\theta:[\ttt_{\omega^\xi}, \ttt_k]\to [\ttt_{\omega^\xi}, \ttt_k]$ taking the $i^{th}$ level to the $i^{th}$ level and so that the restriction of $\theta$ to a unit is an order isomorphism with a unit of $[\ttt_{\omega^\xi}, \ttt_k]$, and so that there exists $(x_i)_{i=1}^k\subset S^k$ so that if $s$ is on the $i^{th}$ level of $[\ttt_{\omega^\xi}, \ttt_k]$, $g(\theta(s))=x_i$.  The result in the case that $k=n|S|$ then follows by the pigeonhole principle, which guarantees the existence of $1\leqslant l_1<\ldots <l_n\leqslant n|S|$ and $x\in S$ so that $x=x_{l_i}$ for all $1\leqslant i\leqslant n$, and using the method from part $(i)$ for defining a monotone map from $[\ttt_{\omega^\xi}, \ttt_n]$ to $[\ttt_{\omega^\xi}, \ttt_k]$ mapping the $i^{th}$ level to the $l_i^{th}$ level and then composing this map with the $\theta$ from the claim.  

We prove the claim by induction on $k$.  For $k=1$, the result holds trivially, since the hypothesis guarantees that $g$ is constant on the single unit of $[\ttt_{\omega^\xi}, \ttt_1]$.  Assume the result holds for a given $k$ and assume $g:[\ttt_{\omega^\xi}, \ttt_{k+1}]\to S$ is as in the statement.  Fix $t$ maximal in the first level.  Note that the proper extensions of $t$ form a set naturally order isomorphic to $[\ttt_{\omega^\xi}, \ttt_k]$. Let $E_t$ denote the proper extensions of $t$ in $[\ttt_{\omega^\xi}, \ttt_{k+1}]$ and define $g_t:E_t\to S$ by letting $g_t(s)=g(s)$.  Identyfing $E_t$ with $[\ttt_{\omega^\xi}, \ttt_k]$ (as we may by previous remarks) and using the inductive hypothesis, we obtain a monotone $\theta_t:E_t\to E_t$ and a finite sequence $(x^t_i)_{i=2}^{k+1}\subset S^k$ so that if $t\prec s$ and $s$ is on the $i^{th}$ level of $[\ttt_{\omega^\xi}, \ttt_{k+1}]$, $x^t_i= g(\theta_t(s))$.  For each $x\in S^k$, let $C_x$ consist of those $t$ which are maximal in the first level of $[\ttt_{\omega^\xi}, \ttt_{k+1}]$ such that $x=(x^t_i)_{i=2}^{k+1}$.  Identifying the first level with $\ttt_{\omega^\xi}$, by Proposition \ref{recall}, there exists an extended monotone map $(\theta_0, e_0)$ of the first level of $[\ttt_{\omega^\xi}, \ttt_{k+1}]$ into itself and $(x_i)_{i=2}^{k+1}=x\in S^k$ so that the range of $e_0$ is contained in $C_x$.  We then define $\theta$ on $[\ttt_{\omega^\xi},\ttt_{k+1}]$ by making it equal to $\theta_0$ on the first level.  Fix $t$ maximal in the first level of $[\ttt_{\omega^\xi}, \ttt_{k+1}]$.  Letting again $E_t$ denote the proper extensions of $t$ in $[\ttt_{\omega^\xi}, \ttt_{k+1}]$, and $E_{e_0(t)}$ denote the proper extensions of $e_0(t)$, we note that $E_t$ is naturally order isomorphic to $E_{e_0(t)}$.  Let $p_t:E_t\to E_{e_0(t)}$ denote the natural order isomorphism, and let $\theta|_{E_t}= \theta_{e_0(t)}\circ p_t$.  This $\theta$ is clearly seen to satisfy the claim with the sequence $(\ee_i)_{i=1}^{k+1}$, where $\ee_1$ is the common value of $g$ on the first level of $[\ttt_{\omega^\xi}, \ttt_{k+1}]$, which consists of a single unit.

\end{proof}

\begin{proof}[Proof of Lemma \ref{combinatorial1}, successor case] Assume Lemma \ref{combinatorial1} holds for a given $\xi\geqslant 1$ and fix a $B$-tree $T$ with $o(T)\geqslant \omega^{\xi+1}$ and $f:T\to 2$ any function.  Let $S=\{(\gamma_0, \gamma_1): \gamma_0\oplus \gamma_1=\xi\}$, and note that $S$ is finite.  We first claim that there exist natural numbers $n_1<n_2<\ldots$, $\ee\in 2$, a pair $(\xi_0, \xi_1)\in S$, and monotone maps $\theta_i':[\ttt_{\omega^\xi}, \ttt_{n_i}]\to T$ so that \begin{enumerate}[(i)]\item for each $i\in \nn$ and $(s,t)\in \Lambda([\ttt_{\omega^\xi}, \ttt_{n_i}])$ with $s,t$ on different levels, $f(\theta_i'(s),\theta_i'(t))=\ee$, \item for each $i\in \nn$ and each unit $U$ of $[\ttt_{\omega^\xi}, \ttt_{n_i}]$, there exist monotone maps $\phi_0:\ttt_{\omega^{\xi_0}}\to U$ and $\phi_1:\ttt_{\omega^{\xi_1}}\to U$ so that for $j=0,1$ and $(s,t)\in [\ttt_{\omega^{\xi_j}}, \ttt_{n_i}]$, $f(\theta_i'\circ\phi_j(s),\theta_i'\circ \phi_j(t))=j$.  \end{enumerate}

We first show how this finishes the proof, and then we show this claim.   Suppose for convenience that the $\ee$ in the claim is equal to $0$. Then fix any $i\in \nn$ and let $U$ be the first unit of $[\ttt_{\omega^\xi}, \ttt_{n_i}]$.  Let $\phi_1:\ttt_{\omega^{\xi_1}}\to U$ be as in (ii) of the claim.  Then  $T_1=\ttt_{\omega^{\xi_1}}$ and $\theta_1:T_1\to T$ defined by $\theta_1= \theta_i'\circ\phi_1$ satisfy the $j=1$ conclusion of Lemma \ref{combinatorial1}. We will construct for each $i\in \nn$ a monotone map $\varphi_i:[\ttt_{\omega^{\xi_0}}, \ttt_{n_i}]\to [\ttt_{\omega^\xi}, \ttt_{n_i}]$ so that $f(\theta'_i\circ \varphi_i(s), \theta'_i\circ \varphi_i(t))=0$ for all $(s,t)\in \Lambda([\ttt_{\omega^{\xi_0}}, \ttt_{n_i}])$. Then the maps $\theta_i'\circ \varphi_i:[\ttt_{\omega^{\xi_0}}, \ttt_{n_i}]\to T$, along with an appeal to Remark \ref{useful remark}, guarantee the existence of the $T_0$ and $\theta_0$ required for the conclusion of Lemma \ref{combinatorial1}.  Here we use the fact that $\omega^{\xi_0+1}=\sup_{i\in \nn} \omega^{\xi_0} n_i= \sup_{i\in \nn} o([\ttt_{\omega^{\xi_0}}, \ttt_{n_i}])$.

We define $\varphi_i$ by induction on the levels of $[\ttt_{\omega^{\xi_0}}, \ttt_{n_i}]$.  Let $U_\varnothing$ be the first unit of $[\ttt_{\omega^\xi}, \ttt_{n_i}]$.  Then by (ii) above, there exists some monotone $\phi_\varnothing:\ttt_{\omega^{\xi_0}}\to U_\varnothing$ such that for $(s,t)\in \Lambda(\ttt_{\omega^{\xi_0}})$, $0=f(\theta_i'\circ \phi_\varnothing(s), \theta_i'\circ \phi_\varnothing(t))$.  We let $V_\varnothing$ denote the first level of $[\ttt_{\omega^{\xi_0}}, \ttt_{n_i}]$ and $\iota_\varnothing:V_\varnothing\to \ttt_{\omega^{\xi_0}}$ be the natural order isomorphism and define $\varphi_i$ on $V_\varnothing$ to be $ \phi_0 \circ  \iota_\varnothing$.  Next, assume $\varphi_i$ has been defined on the first $k$ levels of $[\ttt_{\omega^{\xi_0}}, \ttt_{n_i}]$ for some $k<n_i$ and that $\varphi_i$ takes the $j^{th}$ level of $[\ttt_{\omega^{\xi_0}}, \ttt_{n_i}]$ to the $j^{th}$ level of $[\ttt_{\omega^\xi}, \ttt_{n_i}]$ for each $1\leqslant j\leqslant k$.  Fix $t$ maximal in the $k^{th}$ level of $[\ttt_{\omega^{\xi_0}}, \ttt_{n_i}]$, and let $u$ be an extension of $\varphi_i(t)$ which is a maximal member of the $k^{th}$ level of $[\ttt_{\omega^\xi}, \ttt_{n_i}]$.  Let $V_t$ be the unit beneath $t$ and let $U_u$ be the unit beneath $u$.  Let $\iota_t:V_t\to \ttt_{\omega^{\xi_0}}$ be the natural order isomorphism.  Fix some monotone $\phi_t:\ttt_{\omega^{\xi_0}}\to V_u$ as in (ii) and let $\varphi_i$ be equal to $\phi_t\circ\iota_t$ on $V_t$. This completes the recursive construction, and it is clear that $\varphi_i$ has the announced property. If $s\prec u$ for $s,u$ lying in the same unit $U$ of $[\ttt_{\omega^{\xi_0}}, \ttt_{n_i}]$, then $f(\varphi_i(s), \varphi_i(u))= f(\varphi'_i\circ \varphi_t\circ \iota_t(s), \theta'_i\circ \varphi_t\circ \iota_t(u))=0$ for some $t$ ($t$ may either be regular if $s,u$ are not in the first level, or $t=\varnothing$ if $s,t$ lie in the first level).  If $s\prec u$ for $s,u$ in different levels of $[\ttt_{\omega^\xi}, \ttt_{n_i}]$, then $f(\varphi_i(s), \varphi_i(u))=f(\theta'_i\circ \phi_{t_1}\circ \iota_{t_1}(s), \theta'_i\circ \phi_{t_2}\circ \iota_{t_2}(s))=0$ for some $t_1, t_2$.  This follows from (i) together with the fact that $\phi_{t_1}\circ \iota_{t_1}(s)$ and $\phi_{t_2}\circ \iota_{t_2}(u)$ lie on different levels when $s$ and $u$ do.  

We last complete the proof of the claim.  Fix $n\in \nn$ and fix a monotone map $p_n:[\ttt_{\omega^\xi}, \ttt_{2^{2n|S|-2}}]\to T$.  Let $f':\Lambda([\ttt_{\omega^\xi}, \ttt_{2^{2n|S|-2}}])\to 2$ be given by $f'(s,t)=f(p_n(s), p_n(t))$.  Then by Proposition \ref{long one}$(i)$, there exist $\ee_n\in 2$ and a monotone $q_n:[\ttt_{\omega^\xi}, \ttt_{n|S|}]\to [\ttt_{\omega^\xi}, \ttt_{2^{2n|S|-2}}]$ so that for $(s,t)\in \Lambda([\ttt_{\omega^\xi}, \ttt_{n|S|}])$ on different levels, $f'(q_n(s), q_n(t))=f(p_n\circ q_n(s), p_n\circ q_n(t))=\ee_n$.  Define $g:[\ttt_{\omega^\xi}, \ttt_{n|S|}]\to S$ as follows: For a unit $U$ of $[\ttt_{\omega^\xi}, \ttt_{n|S|}]$, let $\iota_U:\ttt_{\omega^\xi}\to U$ be the natural order isomorphism.  Let $f''(s,t)=f(p_n\circ q_n\circ \iota_U(s), p_n\circ q_n \iota_U(t))$ for each $(s,t)\in \Lambda(\ttt_{\omega^\xi})$.  By the inductive hypothesis, there exists a pair $(\xi_0(n,U), \xi_1(n,U))\in S$ and for $j=0,1$, monotone maps $\phi_{j,U}:\ttt_{\omega^{\xi_j(n,U)}}\to U$ so that $f''(\phi_{j,U}(s), \phi_{j,U}(t))=j$.  Let $g(s)=(\xi_0(n,U), \xi_1(n,U))$ for every $s\in U$.  This defines a $g$ as in Proposition \ref{long one}$(ii)$, whence we deduce the existence of a monotone $r_n:[\ttt_{\omega^\xi}, \ttt_n]\to [\ttt_{\omega^\xi}, \ttt_{n|S|}]$ and a pair $(\xi_0(n), \xi_1(n))$ such that $r_n$ restricted to any unit is an order isomorphism with a unit of $[\ttt_{\omega^\xi}, \ttt_{n|S|}]$ and so that $g\circ r_n\equiv (\xi_0(n), \xi_1(n))$.  Moreover, this $r_n$ maps distinct levels to distinct levels.  Choosing $n_1<n_2<\ldots$, $\ee\in 2$, and $(\xi_0, \xi_1)\in S$ so that $\ee=\ee_{n_i}$ and $(\xi_0, \xi_1)=(\xi_0(n_i), \xi_1(n_i))$ for all $i\in \nn$ is easily seen to satisfy the conclusion with $\theta_i'= p_{n_i}\circ q_{n_i}\circ r_{n_i}$.  

\end{proof}

\begin{remark} 
We remark that the proof of Lemma \ref{combinatorial1} makes it easy to construct examples showing that the result is sharp.  That is: For any $\xi\in \ord$ and for any pair $(\zeta_0, \zeta_1)$ such that $\zeta_0\oplus \zeta_1=\xi$, there exists a $B$-tree with $o(T)=\omega^\xi$ and a function $f:\Lambda(T)\to 2$ so that if $\xi_0, \xi_1\in \ord$ are ordinals with $\xi_0\oplus\xi_1=\xi$, $T_0$, $T_1$ are $B$-trees, and if $\theta_0:T_0\to T$, $\theta_1:T_1\to T$ are monotone maps such that for $\ee\in 2$, and $(s,t)\in \Lambda(T_\ee)$, $f(\theta_ee(s), \theta_\ee(t))=\ee$, then $o(T_0)\leqslant \omega^{\xi_0}$ and $o(T_1)\leqslant \omega^{\xi_1}$. We only sketch the proof, since we do not use this fact in the sequel. We do, however, remark that for any $T$ satisfying the assertion above and any monotone $\theta:\ttt_{\omega^\xi}\to T$, $f'(s,t)=f(\theta(s), \theta(t))$ defines a function $f':\Lambda(\ttt_{\omega^\xi})\to 2$ also satisfying the the claim above.  Therefore this claim is true if and only if it is true when $T$ is replaced by $\ttt_{\omega^\xi}$.

Of course, the $\xi=0$ case is trivial.  For the $\xi=1$ case, $(\zeta_0, \zeta_1)=(0,1)$ or $(0,1)$.  Let $f:\Lambda(\ttt_\omega)\to 2$ be constantly $0$ if $\zeta_0=1$ and $\zeta_1=0$, and constantly $1$ if $\zeta_0=0$ and $\zeta_1=1$.  

Next, assume the result holds for an ordinal $\xi$ and all pairs $(\zeta_0', \zeta_1')$ with $\zeta_0'\oplus \zeta_1'=\xi$.  Let $\zeta_0\oplus \zeta_1= \xi+1$.  Then one of $\zeta_0$ and $\zeta_1$ must be a successor, assume for convenience that $\zeta_0=\zeta_0'+1$. Fix a function $g:\ttt_{\omega^\xi}\to 2$ to satisfy the conclusion of this claim for the pair $(\zeta_0', \zeta_1)$. Note that $\cup_k [\ttt_{\omega^\xi}, \ttt_k]$ is a totally incomparable union. Define $f:\Lambda(\cup_k [\ttt_{\omega^\xi}, \ttt_k])\to 2$ by letting $f(s,t)=0$ when $s$ and $t$ lie on different levels of $[\ttt_{\omega^\xi}, \ttt_k]$ (note that, of course, if $s$ and $t$ are comparable, they must both lie in $[\ttt_{\omega^\xi},\ttt_k]$ for the same $k$), and by letting $f(s,t)=g(\iota(s), \iota(t))$, when $s,t$ lie in the same unit of $[\ttt_{\omega^\xi}, \ttt_k]$, where $\iota$ is the natural order isomorphism from the unit containing $s$ and $t$ to $\ttt_{\omega^\xi}$.  It is easy to check that the conclusion is satisfied by this construction.   

Assume the result holds for all $\eta<\xi$, $\xi$ a limit ordinal.  Express $\xi$ in its Cantor normal form as $\xi=\omega^{\alpha_1}n_1+\ldots + \omega^{\alpha_k} n_k$.  Fix $\xi_0=\omega^{\alpha_1}p_1+\ldots + \omega^{\alpha_k} p_k$ and $\xi_1=\omega^{\alpha_1}q_1+\ldots + \omega^{\alpha_k} q_k$ such that $\xi_0\oplus \xi_1=\xi$.  Since $n_i=p_i+q_i$ for each $1\leqslant i\leqslant k$ and $n_k>0$, either $p_k>0$ or $q_k>0$.  Assume for convenience that $p_k>0$.  Let $\zeta_0= \omega^{\alpha_1}p_1+\ldots +\omega^{\alpha_k} (p_k-1)$, and note that $\xi_0=\zeta_0+\omega^{\alpha_k}$.   For each $\eta<\omega^{\alpha_k}$, $(\zeta_0+\eta)\oplus \xi_1<\xi$.  Thus there exists a $B$-tree $T_\eta$ with $o(T_\eta)=\omega^{(\zeta_0+\eta)\oplus \xi_1}$ and a function $f_\eta:T_\eta\to 2$ satisfying the conclusions with respect to the pair $(\zeta_0+\eta, \xi_1)$. By replacing each $T_\eta$ with an order isomorphic tree, we may assume the union $T=\cup_{\eta<\omega^{\alpha_k}}T_\eta$ is a totally incomparable union. We then define $f:T\to 2$ by letting $f|_{T_\eta}= f_\eta$.  It is easy to check in this case that the conclusion is satisfied, since we are dealing with a totally incomparable union.

\end{remark}

We state a few more simple propositions which we need to complete the proof of Lemma \ref{sum lemma}. For a well-founded $B$-tree, we let $$\Lambda^e(T)=\{(s,t,v)\in T\times T\times MAX(T): s\prec t\preceq v\}.$$  For each $0<\xi$, let $\eee_\xi= \{(s,t)\in \ttt_\xi\times MAX(\ttt_\xi): s\preceq t\}$. 

 \begin{proposition} Let $S$ be a finite set. \begin{enumerate}[(i)]\item For any function $f:\eee_\xi\to S$, there exists an extended monotone map $(\theta, e):\ttt_\xi\to \ttt_\xi$ and a function $g:\ttt_\xi\to S$ so that for all $(s,t)\in \eee_\xi$, $g(s)=f(\theta(s), e(t))$.   \item For any $f:\Lambda^e(\ttt_\xi)\to S$, there exists a function $g:\Lambda(\ttt_\xi)\to S$ and an extended monotone map $(\theta_1, e_1):\ttt_\xi\to \ttt_\xi$ so that $g(s,t)=f(\theta_1(s), \theta_1(t), e_1(v))$ for all $(s,t,v)\in \Lambda^e(\ttt_\xi)$.  \end{enumerate}\label{reduction} \end{proposition}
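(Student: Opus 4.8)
The plan is to prove both parts by the standard ``stabilization along maximal branches'' technique, using Proposition \ref{recall}$(i)$ as the engine in each case. For part $(i)$, the idea is that a function on $\eee_\xi$ records, for each maximal $t$, a ``color pattern'' along the branch below $t$; one wants to pass to a subtree on which this pattern depends only on the lower node $s$, not on the particular maximal $t$ extending it. Concretely, I would fix $f:\eee_\xi\to S$ and, for each $t\in MAX(\ttt_\xi)$, consider the tuple $(f(s,t))_{s\preceq t}$, an element of $S^{|t|}$ (using that $\ttt_\xi$ is such that the predecessors of a maximal node form a chain). This induces a finite partition of $MAX(\ttt_\xi)$ according to this $S$-valued pattern along the branch; apply Proposition \ref{recall}$(i)$ to get an extended monotone map $(\theta,e):\ttt_\xi\to\ttt_\xi$ and a single class $P$ with $e(MAX(\ttt_\xi))\subset P$. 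The subtlety is that ``same class'' must be interpreted as ``same pattern after identifying branches of the same length'': since $\theta$ is monotone and $e$ extends $\theta$, the branch below $e(t)$ refines the branch below $\theta(t)$, and the restriction of the pattern to the image of $\theta$ is what stabilizes. One then defines $g(s)=f(\theta(s),e(t))$ for any (equivalently, every) $t\in MAX(\ttt_\xi)$ with $s\preceq t$; the defining property of $P$ is exactly what guarantees this is well-defined and independent of $t$. A small bookkeeping point: if some $s\in\ttt_\xi$ is not of the form $\theta(s')$, the value $g(s)$ is irrelevant and can be set arbitrarily, or one restricts attention to $\theta(\ttt_\xi)$ and re-indexes via monotone maps as in the remarks following Lemma \ref{combinatorial1}.

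For part $(ii)$, I would reduce the three-variable coloring $f:\Lambda^e(\ttt_\xi)\to S$ to a two-variable coloring in two stages. First, for each fixed pair $s\prec t$ in $\ttt_\xi$, the function $v\mapsto f(s,t,v)$ lives on $\{v\in MAX(\ttt_\xi):t\preceq v\}$; the goal is to kill the dependence on $v$. This is precisely the content of part $(i)$ applied ``below $t$'': the subtree of extensions of $t$ inside $\ttt_\xi$ is again well-founded, and one wants to stabilize $f(s,t,\cdot)$ simultaneously over all $s\prec t$. Since for fixed $t$ there are only finitely many $s\prec t$ (the predecessors form a finite chain, as $o(\ttt_\xi)$ derivations terminate and $\ttt_\xi$ has finite branches below maximal nodes in the relevant sense), the map $v\mapsto (f(s,t,v))_{s\prec t}$ is $S^{<\nn}$-valued, and applying part $(i)$-style stabilization to the whole tree $\ttt_\xi$ (using $\eee_\xi$ with $S$ replaced by the finite set of possible tuples) yields $(\theta_1,e_1):\ttt_\xi\to\ttt_\xi$ so that $f(\theta_1(s),\theta_1(t),e_1(v))$ depends only on $(\theta_1(s),\theta_1(t))$, i.e.\ only on $(s,t)$ after pulling back. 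Setting $g(s,t)=f(\theta_1(s),\theta_1(t),e_1(v))$ for any maximal $v\succeq t$ then gives a well-defined function on $\Lambda(\ttt_\xi)$ with the required property.

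The main obstacle I anticipate is purely organizational rather than deep: making precise the claim that ``the color pattern along a branch is a single $S^{<\nn}$-valued function on $MAX(\ttt_\xi)$ with finitely many values,'' and then checking that the extended monotone map delivered by Proposition \ref{recall}$(i)$ genuinely transports this pattern coherently, i.e.\ that $s\preceq t$ in the domain implies $\theta(s)\preceq e(t)$ and that $g$ as defined does not depend on the choice of maximal extension. For part $(ii)$ there is the additional wrinkle that one must stabilize $f(s,t,\cdot)$ over \emph{all} pairs $s\prec t$ at once; the clean way to do this is to note that the data $(s,t)\mapsto (f(s,t,v))_v$ is itself a coloring of $\Lambda^e$ that, once we stabilize in the $v$ slot via a single application of the $\eee_\xi$-reduction, collapses to a coloring of $\Lambda$, and no iteration over pairs is actually needed. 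I would write the argument so that part $(ii)$ cites part $(i)$ essentially verbatim, keeping the two proofs parallel.
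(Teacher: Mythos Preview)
Your approach has a real gap in both parts: the ``finite partition'' you want to feed into Proposition \ref{recall}$(i)$ is not finite. For part $(i)$, the map $t\mapsto (f(t|_i,t))_{i=1}^{|t|}$ takes values in $\bigcup_n S^n$, and in $\ttt_\xi$ the lengths $|t|$ of maximal elements are unbounded as soon as $\xi\geqslant\omega$ (already for $\ttt_\omega$, the maximal nodes are $(n,n-1,\ldots,1)$ for all $n$). So Proposition \ref{recall}$(i)$ does not apply to this partition, and the sentence ``same pattern after identifying branches of the same length'' does not rescue it: you would need the image of $\theta$ to land in branches of a single fixed length, which it will not. The same problem recurs in your reduction for part $(ii)$: the tuple $(f(s,t,v))_{s\prec t}$ has length $|t|-1$, so replacing $S$ by ``the finite set of possible tuples'' is again replacing it by an infinite set.

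The fix is not to stabilize the whole branch pattern at once, but to peel off one node at a time by transfinite induction on $\xi$. For finite $\xi$ there is a unique maximal extension and the claim is trivial. At a limit, $\ttt_\xi=\bigcup_{\eta<\xi}\ttt_{\eta+1}$ is a totally incomparable union and one glues the inductive hypotheses. At a successor $\xi+1$, first partition $MAX(\ttt_{\xi+1})$ by the single value $t\mapsto f((\xi+1),t)$; this is a genuinely finite ($|S|$-class) partition, so Proposition \ref{recall}$(i)$ yields $(\theta',e')$ and a fixed color $x$ with $e'(MAX(\ttt_{\xi+1}))\subset C_x$. Set $g((\xi+1))=x$, then define a new $S$-valued function on $\eee_\xi$ by restricting to the subtree below the root and apply the inductive hypothesis there. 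Part $(ii)$ follows the same inductive scheme, using part $(i)$ once at each successor step to stabilize the coloring $f((\xi+1),\cdot,\cdot)$ on $\eee_\xi$. The point is that at each stage you only ever invoke \ref{recall}$(i)$ on an honest $|S|$-class partition, never on $S^{<\nn}$.
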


The content of this proposition is that we can remove from $f$ the dependence on the second argument for $(i)$, and remove from $f$ the dependence on the third argument in $(ii)$.    

\begin{proof}$(i)$ Recall that $\ttt_{\xi+1}=\{(\xi+1)\}\cup \{(\xi+1)\cat s: s\in \ttt_\xi\}$, and note that $\ttt_\xi \ni s\leftrightarrow (\xi+1)\cat s\in \ttt_{\xi+1}$ is an order isomorphism between $\ttt_\xi$ and the proper extensions of $(\xi+1)$ in $\ttt_{\xi+1}$ when $\xi>0$.  In particular, $s$ is maximal in $\ttt_\xi$ if and only if $(\xi+1)\cat s$ is maximal in $\ttt_{\xi+1}$.  We will use these facts implicitly throughout the proof.

By induction.  For any finite $\xi$ the result is trivial, since each member of $\ttt_\xi$ has a unique maximal extension.  We can simply take $\theta$ and $e$ to be the identities and $g(s)=f(s,t)$, where $t$ is the unique maximal extension of $t$.   

Suppose $\xi$ is a limit ordinal and the result holds for every $\eta<\xi$.  Then since $\eee_{\eta+1}\subset \eee_\xi$, by considering $f|_{\eee_{\eta+1}}$, we may use the inductive hypothesis to obtain $g_\eta:\ttt_{\eta+1}\to S$ and an extended monotone map $(\theta_\eta, e_\eta):\ttt_{\eta+1}\to \ttt_{\eta+1}$ so that $g_\eta(s)= f(\theta_\eta(s), e_\eta(t))$ for all $(s,t)\in \eee_{\eta+1}$. Define $(\theta, e)$ and $g$ by letting $\theta|_{\ttt_{\eta+1}}= \theta_\eta$, $e|_{MAX(\ttt_{\eta+1})}=e_\eta$, and $g|_{\ttt_{\eta+1}}=g_\eta$.  

Suppose the result holds for a given $\xi>0$ and let $f:\eee_{\xi+1}\to S$ be any function.  For each $x\in S$, let $C_x=\{t\in MAX(\ttt_{\xi+1}): x= f((\xi+1), t)\}$.  This is a finite partition of $MAX(\ttt_{\xi+1})$, and Proposition \ref{recall} yields that there exists an extended monotone map $(\theta', e'):\ttt_{\xi+1}\to \ttt_{\xi+1}$ and $x\in S$ such that $e'(MAX(\ttt_{\xi+1}))\subset C_x$.  Let $g((\xi+1))=x$.  Next, let $f''(s,t)=f((\xi+1)\cat \theta'(s),(\xi+1)\cat e'(t))$ for $(s,t)\in \eee_\xi$.  By the inductive hypothesis, there exists $(\theta'', e''):\ttt_\xi\to \ttt_\xi$ and $g'':\ttt_\xi\to S$ such that for any $(s,t)\in \eee_\xi$, $g''(s)=f''(\theta''(s), e''(t))$. Let $\theta((\xi+1))=(\xi+1)$.   For $s\in \ttt_\xi$, let $\theta((\xi+1)\cat s)= \theta'((\xi+1)\cat \theta''(s))$ and let $g((\xi+1)\cat s)=g''(s)$.  For $s\in MAX(\ttt_\xi)$, let $e((\xi+1)\cat s)= e'((\xi+1)\cat e''(s))$. It is straightforward to verify that the conclusions are satisfied with these definitions.

$(ii)$ Again, the result is trivial for any finite $\xi$, since there are unique maximal extensions.  We let $\theta_1$ and $e_1$ be the identities and $g(s,t)=f(s,t,v)$, where $v$ is the unique maximal extension of $t$.

Assume the result holds for all $\eta<\xi$, $\xi$ a limit ordinal.  Then $\Lambda^e(\ttt_\xi)$ is simply the disjoint union $\cup_{\eta<\xi} \Lambda^e(\ttt_{\eta+1})$.  Apply the inductive hypothesis to obtain for each $\eta<\xi$ an extended monotone $(\theta_{1, \eta}, e_{1, \eta}):\ttt_{\eta+1}\to \ttt_{\eta+1}$ and $g_\eta:\Lambda(\ttt_{\eta+1})\to S$ such that for each $(s,t,v)\in \Lambda^e(\ttt_{\eta+1})$, $g_\eta(s,t)= f(\theta_{1, \eta}(s), \theta_{1, \eta}(t), e_{1, \eta}(v))$.  Let $g|_{\Lambda(\ttt_{\eta+1})}=g_\eta$, $\theta_1|_{\ttt_{\eta+1}}=\theta_{1, \eta}$, $e_1|_{MAX(\ttt_{\eta+1})}= e_{1, \eta}$.   

Assume the result holds for a given $\xi$ and let $f:\Lambda^e(\ttt_{\xi+1})\to S$ be a function. Define $f':\Lambda^e(\ttt_\xi)\to S$ by letting $f'(s,t,v)=f((\xi+1)\cat s, (\xi+1)\cat t, (\xi+1)\cat v)$.  By the inductive hypothesis, there exists $g':\Lambda(\ttt_\xi)\to S$ and an extended monotone map $(\theta', e')$ of $\ttt_\xi$ into $\ttt_\xi$ so that $f'(\theta'(s), \theta'(t), e'(v))= g'(s,t)$ for all $(s,t,v)\in \Lambda^e(\ttt_\xi)$.  Define $f'':\eee_\xi\to S$ by $$f''(s,t)= f((\xi+1), (\xi+1)\cat \theta'(s), (\xi+1)\cat e'(t)).$$   By part $(i)$, there exists an extended monotone $(\theta'', e'')$ of $\ttt_\xi$ into itself and $g'':\ttt_\xi\to S$ so that $g''(s)=f''(\theta''(s), e''(t))$ for all $(s,t)\in \eee_\xi$.  Let $\theta_1((\xi+1))= (\xi+1)$, $\theta_1((\xi+1)\cat s)= (\xi+1)\cat \theta'\circ\theta''(s)$, and $e_1((\xi+1)\cat t)= (\xi+1)\cat e'\circ e''(t)$.  Let $g((\xi+1), (\xi+1)\cat s)=g''(s)$ for $s\in \ttt_\xi$, and for $s\prec t$, $s,t\in \ttt_\xi$, let $g(s,t)=g'(\theta''(s), \theta''(t))$.  Again, verification of the conclusions is straightforward.  

\end{proof}

\begin{corollary} For any ordinal $\xi$, any finite set $S$, and any function $f:\Lambda^e(\ttt_{\omega^{\omega^\xi}})\to S$, there exists $x\in S$ and an extended monotone map $(\theta, e)$ of $\ttt_{\omega^{\omega^\xi}}$ into itself so that for all $(s,t,v)\in \Lambda^e(\ttt_{\omega^{\omega^\xi}})$, $f(\theta(s), \theta(t), e(v))=x$. 
\label{cut to the chase}
\end{corollary}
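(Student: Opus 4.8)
The plan is to combine Corollary \ref{main corollary} (which handles colorings of $\Lambda(T)$ by reducing to the two-color case and iterating) with Proposition \ref{reduction}$(ii)$ (which strips the dependence on the third, maximal-extension argument) and Proposition \ref{recall}$(iii)$ or $(ii)$ (which handles colorings that only depend on a single node or on a node-and-its-maximal-extension pair). The key point is that $\omega^{\omega^\xi}$ is closed under ordinal multiplication and under the operations implicit in those results, so after each reduction step we still have a $B$-tree of order $\omega^{\omega^\xi}$, and composing the resulting monotone maps yields the conclusion.

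First I would apply Proposition \ref{reduction}$(ii)$ to $f:\Lambda^e(\ttt_{\omega^{\omega^\xi}})\to S$: this gives a function $g:\Lambda(\ttt_{\omega^{\omega^\xi}})\to S$ and an extended monotone map $(\theta_1,e_1):\ttt_{\omega^{\omega^\xi}}\to \ttt_{\omega^{\omega^\xi}}$ with $g(s,t)=f(\theta_1(s),\theta_1(t),e_1(v))$ for all $(s,t,v)\in \Lambda^e(\ttt_{\omega^{\omega^\xi}})$. Next I would apply Corollary \ref{main corollary} to $g$: since $o(\ttt_{\omega^{\omega^\xi}})=\omega^{\omega^\xi}\geqslant \omega^{\omega^\xi}$, there is a $B$-tree $T_0$ with $o(T_0)=\omega^{\omega^\xi}$, a monotone map $\theta:T_0\to \ttt_{\omega^{\omega^\xi}}$, and $x\in S$ with $g(\theta(s),\theta(t))=x$ for all $(s,t)\in\Lambda(T_0)$. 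Using the standard fact recorded in the remarks (there is a monotone map $\ttt_{\omega^{\omega^\xi}}\to T_0$ since $o(T_0)\geqslant \omega^{\omega^\xi}$), I may replace $T_0$ by $\ttt_{\omega^{\omega^\xi}}$ and take $\theta:\ttt_{\omega^{\omega^\xi}}\to\ttt_{\omega^{\omega^\xi}}$ monotone. Finally, I would extend $\theta$ to an extended monotone map $(\theta,e)$ — possible because $\ttt_{\omega^{\omega^\xi}}$ is well-founded and non-empty — and set the final extended monotone map to be $(\theta_1\circ\theta,\ e_1\circ e)$ (composing extended monotone maps componentwise). Then for $(s,t,v)\in\Lambda^e(\ttt_{\omega^{\omega^\xi}})$ we have $(\theta(s),\theta(t),e(v))\in\Lambda^e(\ttt_{\omega^{\omega^\xi}})$, and since $(\theta(s),\theta(t))\in\Lambda(\ttt_{\omega^{\omega^\xi}})$, $f(\theta_1\theta(s),\theta_1\theta(t),e_1 e(v))=g(\theta(s),\theta(t))=x$, as desired.

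I do not expect a serious obstacle here: this corollary is a pure bookkeeping consequence of the two preceding results. The only mild subtlety to check is that Corollary \ref{main corollary} indeed produces a monotone map into $\ttt_{\omega^{\omega^\xi}}$ (not merely into an abstract $B$-tree) and that it applies with the base tree being $\ttt_{\omega^{\omega^\xi}}$ itself — both are immediate from the displayed statements — and that composing an extended monotone map with a monotone map (after extending the latter to an extended monotone map) again yields an extended monotone map, which is routine from the definitions of monotone map and extension. The hypotheses are arranged so that at no point does the order of the tree drop below $\omega^{\omega^\xi}$, so no appeal to the closure of $\omega^{\omega^\xi}$ under multiplication is even needed beyond what is already built into Corollary \ref{main corollary}.
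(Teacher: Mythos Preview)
Your proposal is correct and follows essentially the same approach as the paper: apply Proposition~\ref{reduction}$(ii)$ to strip the dependence on the maximal extension, then apply Corollary~\ref{main corollary} to stabilize the resulting $\Lambda$-coloring, extend the monotone map, and compose. The only cosmetic difference is that the paper invokes Corollary~\ref{main corollary} directly with domain $\ttt_{\omega^{\omega^\xi}}$ (implicitly using the remark following it), whereas you make the intermediate passage through $T_0$ and back explicit.
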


\begin{proof} By Proposition \ref{reduction}, there exists an extended monotone map $(\theta_1, e_1)$ of $\ttt_{\omega^{\omega^\xi}}$ into itself and a function $g:\Lambda(\ttt_{\omega^{\omega^\xi}})\to S$ such that for each $(s,t,v)\in \Lambda^e(\ttt_{\omega^{\omega^\xi}})$, $g(s,t)=f(\theta_1(s), \theta_1(t), e_1(v))$.   By Corollary \ref{main corollary}, there exists a monotone map $\theta_2$ of $\ttt_{\omega^{\omega^\xi}}$ into itself and $x\in S$ so that $g(\theta_2(s), \theta_2(t))=x$ for all $(s,t)\in \Lambda(\ttt_{\omega^{\omega^\xi}})$.  Let $e_2$ be any extension of $\theta_2$ and let $(\theta, e)=(\theta_1\circ \theta_2, e_1\circ e_2)$.

\end{proof}

\begin{lemma} If $\zeta$ is any limit ordinal, there exist monotone maps $\theta, \phi:\ttt_\zeta\to \ttt_\zeta$ so that \begin{enumerate}[(i)]\item for each $s\in \ttt_\zeta$, $\theta(s)\prec \phi(s)$, \item for each $s,t\in \ttt_\zeta$ with $s\prec t$, $\phi(s)\prec \theta(t)$.  \end{enumerate}
\label{shuffling}
\end{lemma}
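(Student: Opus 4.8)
The plan is to build $\theta$ and $\phi$ together by recursion on $\zeta$, exploiting the recursive structure of the trees $\ttt_\zeta$. Recall that when $\zeta$ is a limit ordinal, $\ttt_\zeta = \cup_{\eta<\zeta}\ttt_{\eta+1}$ is a totally incomparable union, and that $\ttt_{\eta+1} = \{(\eta+1)\}\cup\{(\eta+1)\cat s : s\in \ttt_\eta\}$. The idea is simply that there is ``enough room'' in $\ttt_\zeta$ to interleave two copies: we want $\theta(s)$ and $\phi(s)$ to straddle the image branch so that the $\phi$-image of any node sits strictly below the $\theta$-image of any strictly later node. First I would handle the base case $\zeta=\omega$: here $\ttt_\omega$ is the totally incomparable union of the chains $\ttt_{n+1}$ (each order-isomorphic to $\{1,\ldots,n+1\}$), and on the chain $\ttt_{2n+1}$ of length $2n+1$ we can send the $i$-th node of an abstract chain of length $n$ to the $(2i-1)$-st node under $\theta$ and to the $(2i)$-th node under $\phi$; spreading these choices over all $n$ and taking the totally incomparable union over the blocks gives monotone $\theta,\phi:\ttt_\omega\to\ttt_\omega$ with the required strict interleaving, since comparable nodes in $\ttt_\omega$ lie in a common block.

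For the inductive step, suppose the conclusion holds for all limit ordinals below $\zeta$. Since $\ttt_\zeta$ is the totally incomparable union of $\ttt_{\eta+1}$ over $\eta<\zeta$, and since the conclusion only constrains the behavior of $\theta,\phi$ along chains (the pairs $s\prec t$), it suffices to produce, for each $\eta<\zeta$, monotone maps $\theta_\eta,\phi_\eta:\ttt_{\eta+1}\to \ttt_{\eta'+1}$ for some $\eta'<\zeta$ satisfying (i) and (ii), with the various target blocks chosen disjoint (up to order-isomorphic relabeling), and then set $\theta,\phi$ to be $\theta_\eta,\phi_\eta$ on $\ttt_{\eta+1}$; the totally incomparable union structure makes the global map monotone and preserves (i),(ii) because no new comparabilities are created across blocks. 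So the real task reduces to treating a single successor ordinal $\eta+1$. If $\eta$ is itself a limit ordinal, apply the inductive hypothesis to $\ttt_\eta$ to get maps on $\ttt_\eta$ and extend them over the extra initial node $(\eta+1)$ of $\ttt_{\eta+1}$: concretely, send $(\eta+1)$ (the root of $\ttt_{\eta+1}$) under $\theta$ to $(\eta+1)$ and under $\phi$ to $(\eta+1)\cat\theta_{\text{old}}(\text{root of }\ttt_\eta)$ — wait, more carefully, prepend a short ``buffer'' so that (ii) holds at the root — and then shift the inductively obtained maps down below a deep enough node of $\ttt_{\eta+1}$. If $\eta = \eta'+1$ is a successor, iterate: peel off finitely many top nodes until reaching a limit stage or reaching a finite tree (a chain), where the base-case chain argument applies directly.

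The main obstacle I anticipate is bookkeeping condition (ii) cleanly across the recursion: one must ensure that whenever $s\prec t$, the image $\phi(s)$ is a \emph{proper} initial segment of $\theta(t)$, which forces $\theta$ and $\phi$ to be ``spread out'' with a gap between consecutive levels, and this gap must be maintained when gluing the inductive data over the extra root node and when passing through limit unions. The cleanest way to manage this is probably to prove a slightly stronger statement by induction — e.g., that for every limit $\zeta$ and every $\eta$ with $\eta+1 \le \zeta$ (or $o(T)\le\zeta$), there are monotone $\theta,\phi:\ttt_{\eta+1}\to\ttt_\zeta$ satisfying (i), (ii) \emph{and} such that the $\theta$-image of the root already has length $\ge 1$ (so there is a slot available below it) — which makes the root-extension step routine. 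An alternative, perhaps slicker, route: first fix a monotone $\iota:\ttt_\zeta\to\ttt_\zeta$ whose image is ``sparse'' in the sense that between any node of the image and the next there is at least one intervening node of $\ttt_\zeta$ (such $\iota$ exists by a direct recursion using $o(\ttt_\zeta)=\zeta=o(\ttt_\zeta)\cdot 2$ when $\zeta$ is an additively indecomposable limit, and by a block argument in general), and then a second monotone map picking out, for each image node, either ``itself'' for $\theta$ or ``the node just below'' for $\phi$; but verifying the sparseness claim essentially recapitulates the recursion above, so I would simply carry out the direct inductive construction.
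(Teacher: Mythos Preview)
Your recursive approach can be made to work with care --- for instance, when $\eta$ is a limit the ``buffer'' you grope for is simply to map $\ttt_{\eta+1}$ into $\ttt_{\eta+2}$ by sending the root $(\eta+1)$ to $(\eta+2)$ under $\theta$ and to $(\eta+2,\eta+1)$ under $\phi$, then prepending $(\eta+2,\eta+1)$ to the inductively obtained maps on $\ttt_\eta$; the successor-chain case is handled analogously by doubling the top chain. So the strategy is sound, but the execution is considerably more involved than necessary, and your proposal is still at the sketch stage on exactly the points (the root extension, the strengthened inductive hypothesis) where the details matter.

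The paper takes precisely the route you mention at the end and then dismiss. The key ordinal fact is that $2\zeta=\zeta$ for \emph{every} limit ordinal $\zeta$, not only additively indecomposable ones (e.g.\ $2(\omega+\omega)=2\omega+2\omega=\omega+\omega$). Hence $o([\ttt_2,\ttt_\zeta])=o(\ttt_2)\,o(\ttt_\zeta)=2\zeta=\zeta$, and so there exists a monotone $\theta_1:[\ttt_2,\ttt_\zeta]\to\ttt_\zeta$. But $[\ttt_2,\ttt_\zeta]$ is, by construction, exactly $\ttt_\zeta$ with each node replaced by a consecutive pair of nodes, so there are explicit maps $p,q:\ttt_\zeta\to[\ttt_2,\ttt_\zeta]$ (sending $s=(\xi_1,\ldots,\xi_n)$ to the sequence terminating at the first, respectively second, node of the $n$-th pair) satisfying
\[
p(s|_1)\prec q(s|_1)\prec p(s|_2)\prec q(s|_2)\prec\cdots\prec p(s)\prec q(s).
\]
Setting $\theta=\theta_1\circ p$ and $\phi=\theta_1\circ q$ finishes the proof in a few lines. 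So the comparison is: you trade one clean ordinal-arithmetic identity, already packaged in the tree-multiplication machinery, for a multi-case transfinite induction with bookkeeping across chains and limit stages. Both reach the goal, but the paper's argument avoids the recursion entirely.
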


These maps have the effect of ``shuffling'' the members of $\ttt_\zeta$ into $\ttt_\zeta$.  By this, we mean that for any $s\in \ttt_\zeta$, $$\theta(s|_1)\prec \phi(s|_1)\prec \theta(s|_2)\prec \phi(s|_2)\prec \ldots \prec \theta(s)\prec \phi(s).$$  The scheme of the proof is as follows: Since $[\ttt_2,\ttt_\zeta]$ and $\ttt_\zeta$ have the same order, and since $[\ttt_2, \ttt_\zeta]$ is essentially $\ttt_\zeta$ where all the nodes have been replaced by a pair of nodes, we can define $p(s)$ and $q(s)$ to be the sequences terminating at the first and second nodes, respectively, which took the place of the original node $s$.  

\begin{proof} 

For a limit ordinal $\zeta$, $2\zeta=\zeta$.   This means that $o([\ttt_2, \ttt_\zeta])=2 \zeta=\zeta$, and there exists a monotone map $\theta_1:[\ttt_2, \ttt_\zeta]\to \ttt_\zeta$.  Fix $s=(\xi_1, \ldots, \xi_n)\in \ttt_\zeta$ and define $p,q:\ttt_\zeta\to [\ttt_2, \ttt_\zeta]$ by \begin{align*} p(s) & = (2, \xi_1)\cat (1, \xi_1)\cat (2, \xi_2) \cat (1, \xi_2)\cat \ldots \cat (2, \xi_n) \\ & = ((2,1), \xi_1^{(2)})\cat ((2,1), \xi_2^{(2)}) \cat \ldots \cat (2, \xi_n)\end{align*} and \begin{align*} q(s)& = (2, \xi_1)\cat (1, \xi_1)\cat (2, \xi_2) \cat (1, \xi_2)\cat \ldots \cat (2, \xi_n)\cat (1, \xi_n) \\ & = ((2,1), \xi_1^{(2)})\cat ((2,1), \xi_2^{(2)}) \cat \ldots \cat ((2,1), \xi_n^{(2)}). \end{align*}  Then for all $s\in \ttt_\zeta$, $$p(s|_1)\prec q(s|_1)\prec \ldots \prec p(s)\prec q(s).$$  Let $\theta(s)=\theta_1(p(s))$ and $\phi(s)=\theta_1(q(s))$.

\end{proof}

We are now ready to prove Lemma \ref{sum lemma}.  

\begin{corollary} Let $X$ be a Banach space and let $K, L\subset X^*$ be bodies.  For $\ee>0$ and any ordinal $\xi$,  if $j(K+L, \ee)>\omega^{\omega^\xi}$, then $\max\{j(K, \ee/3), j(L, \ee/3)\}>\omega^{\omega^\xi}$.  In particular, if $A,B:X\to Y$ are operators such that $\mathcal{J}(A+B)>\omega^{\omega^\xi}$, then either $\mathcal{J}(A)>\omega^{\omega^\xi}$ or $\mathcal{J}(B)>\omega^{\omega^\xi}$.  

\end{corollary}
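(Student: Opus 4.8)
The plan is to peel off the operator statement, reduce everything to the assertion about bodies, and then prove the body assertion by a selection-plus-Ramsey argument combining the shuffling maps of Lemma~\ref{shuffling} with the endpoint-sensitive Ramsey statement of Corollary~\ref{cut to the chase}. For the reduction: if $A,B\colon X\to Y$ and $\mathcal J(A+B)>\omega^{\omega^\xi}$, there is $\ee>0$ with $j\big((A+B)^*B_{Y^*},\ee\big)>\omega^{\omega^\xi}$, and since $(A+B)^*y^*=A^*y^*+B^*y^*$ one has $(A+B)^*B_{Y^*}\subset K+L$ for $K=A^*B_{Y^*}$, $L=B^*B_{Y^*}$; hence every $((A+B)^*B_{Y^*},\ee)$-cs sequence is $(K+L,\ee)$-cs and $j(K+L,\ee)>\omega^{\omega^\xi}$. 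Granting the body statement we get $j(K,\ee/3)>\omega^{\omega^\xi}$ or $j(L,\ee/3)>\omega^{\omega^\xi}$, hence $\mathcal J(A)>\omega^{\omega^\xi}$ or $\mathcal J(B)>\omega^{\omega^\xi}$. The last sentence of the corollary is exactly the body statement, so it remains to treat bodies $K,L\subset X^*$.

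Write $\zeta=\omega^{\omega^\xi}$. I would first record that $j(K+L,\ee)>\zeta$ is equivalent to the existence of a monotone map $g\colon\ttt_\zeta\to J(K+L,\ee)$ all of whose values are nonempty: this uses $o(\ttt_\zeta)=\zeta$ together with the fact that $J(K+L,\ee)$, being a genuine tree containing $\varnothing$, has order exactly one larger than the $B$-tree of its nonempty members; keeping this bookkeeping in mind is what makes the \emph{strict} inequality free at the end. Next, apply Lemma~\ref{shuffling} and compose with $g$ to obtain monotone maps $\Theta,\Phi\colon\ttt_\zeta\to J(K+L,\ee)$ with the shuffled behaviour $\Theta(s|_1)\prec\Phi(s|_1)\prec\Theta(s|_2)\prec\Phi(s|_2)\prec\cdots$ along each branch. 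For $u\in\ttt_\zeta$ let $\pi_u$ and $\tau_u$ be the last coordinates of $\Phi(u)$ and $\Theta(u)$; these depend only on $u$, and for maximal $v$ the selected sequence $\Psi(v)=(\pi_{v|_1},\dots,\pi_{v|_{|v|}})$ is a subsequence of $\Phi(v)$, hence $(K+L,\ee)$-cs, with each buffer $\tau_{v|_{k+1}}$ interleaved between $\pi_{v|_k}$ and $\pi_{v|_{k+1}}$.

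For each cut of a selected sequence $\Psi(v)$ the $(K+L,\ee)$-cs property produces, via Hahn--Banach, a functional $z^*=k^*+l^*\in K+L$ ($k^*\in K$, $l^*\in L$) realizing $\ee$-separation across that cut, so each individual pair across the cut is $\ee/2$-dominated by $k^*$ or by $l^*$. Using the buffer coordinates supplied by the shuffle to pin down, per cut, a canonical ``left representative,'' I would record this domination datum as a finite-valued colouring of $\Lambda^e(\ttt_\zeta)$ (the slot $(u,w,v)$ carrying ancestor, descendant, and endpoint), then apply Corollary~\ref{cut to the chase} to get an extended monotone self-map of $\ttt_\zeta$ on which the colouring is constant, say pointing to $K$ (interchange $K,L$ otherwise). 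After composing $g$ with this self-map, a triangle-inequality step routed through the buffer elements upgrades the per-pair $\ee/2$-separations to a single functional in $K$ realizing $\ee/3$-separation across each cut of $\Psi(v)$ --- the $3$ absorbing the loss $\ee\mapsto\ee/2$ from $z^*=k^*+l^*$ together with one telescoping through the buffers. Thus $\Psi$ becomes a monotone map $\ttt_\zeta\to J(K,\ee/3)$ with nonempty values, which by the bookkeeping above gives $o(J(K,\ee/3))\geqslant\zeta+1$, i.e.\ $j(K,\ee/3)>\omega^{\omega^\xi}$.

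The step I expect to be the main obstacle is precisely the upgrade from \emph{pairwise} $\ee/2$-separation (all that the decomposition $z^*=k^*+l^*$ yields for a single pair across a cut) to \emph{uniform convex-hull} $\ee/3$-separation of an entire selected branch by one functional. This is why the plain pair colouring of Corollary~\ref{main corollary} is insufficient and one must both shuffle in the buffer coordinates --- so that ``ancestor versus descendant across a cut'' becomes the triangulable pair ``buffer-at-the-cut versus descendant'' --- and use the endpoint slot of $\Lambda^e$, since the witnessing functionals depend on the whole sequence rather than on an initial segment. Making this interplay close up --- the exact colouring, the compatibility of the maps produced by Corollary~\ref{cut to the chase} with the selection, and the precise constants --- is where the real work lies.
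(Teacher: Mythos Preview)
Your reduction of the operator statement to the body statement is correct and matches the paper, and you have correctly identified the tools needed: a tree-indexed family witnessing $j(K+L,\ee)>\omega^{\omega^\xi}$, separating functionals split as $k^*+l^*$ with $k^*\in K$, $l^*\in L$, a colouring of $\Lambda^e(\ttt_{\omega^{\omega^\xi}})$ to be stabilized via Corollary~\ref{cut to the chase}, and the shuffling maps of Lemma~\ref{shuffling}. The gap is in the colouring you propose.

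Your colouring records only the binary datum ``for this triple, does $k^*$ or $l^*$ dominate the buffer--descendant pair by $\ee/2$.'' After stabilization you learn that, say, $K$ wins every such comparison: for each cut and each descendant $w$, $k^*_{t,v}(\text{buffer}-\pi_w)\geqslant\ee/2$. But to conclude that $(\pi_{v|_i})_i$ is $(K,\ee/3)$-cs you need, for the \emph{single} functional $k^*_{t,v}$ attached to the cut $t$, that $k^*_{t,v}(\pi_s-\pi_w)\geqslant\ee/3$ for \emph{all} ancestors $s$ and descendants $w$. Writing $k^*_{t,v}(\pi_s-\pi_w)=k^*_{t,v}(\pi_s-\text{buffer})+k^*_{t,v}(\text{buffer}-\pi_w)$, you control the second summand but have no control whatsoever over the first: $\pi_s$ and the buffer both lie on the left of the cut, and nothing in your hypotheses bounds their $k^*_{t,v}$-difference. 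The triangle-through-buffers step cannot close by itself.

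The paper's remedy is to use a much finer colouring: rather than recording which of $k^*,l^*$ wins, it discretizes the actual \emph{values}. One fixes $R$ with $K,L\subset RB_{X^*}$ and a partition $\mathcal P$ of $[-R,R]$ into sets of diameter $\leqslant\delta=\ee/24$, and colours $(s,t,v)\in\Lambda^e$ by the cell of $\mathcal P\times\mathcal P$ containing $\bigl(y^*_{t,v}(x_s),\,z^*_{t,v}(x_s)\bigr)$. One application of Corollary~\ref{cut to the chase} makes $y^*_{t,v}(x_s)\approx\lambda_1$ and $z^*_{t,v}(x_s)\approx\lambda_2$ for all ancestors $s\prec t$; a second application (colouring by the cell containing $\bigl(y^*_{t,v}(x_u),z^*_{t,v}(x_u)\bigr)$ for descendants $u\succ t$) makes $y^*_{t,v}(x_u)\approx\mu_1$ and $z^*_{t,v}(x_u)\approx\mu_2$. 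Only \emph{after} this double stabilization does one invoke the shuffle $\theta,\phi$ (to get $s\preceq t$ replaced by the strict $\theta(s)\prec\phi(t)$ needed to apply the stabilized estimates), and then computes directly: $(\lambda_1-\mu_1)+(\lambda_2-\mu_2)\geqslant\ee-4\delta$, so one of the two differences is at least $\ee/2-2\delta$, and that difference alone gives $y^*_{\phi(t),e(v)}(x_{\theta(s)}-x_{\theta(u)})\geqslant\ee/2-4\delta=\ee/3$ for all $s\preceq t\prec u\preceq v$. No triangle-through-buffers is needed, because the stabilization already made the functional approximately constant on each side of the cut. In short: replace your binary ``who wins'' colouring by a $\mathcal P\times\mathcal P$-valued discretization of the values, apply Corollary~\ref{cut to the chase} twice (once for ancestors, once for descendants), and shuffle at the end rather than the beginning.
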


\begin{proof}

Choose $R>0$ so that $K, L\subset RB_{X^*}$.  Let $\mathcal{P}$ be a finite partition of $[-R, R]$ into sets of diameter not exceeding $\delta=\ee/24$.  If $j(K+L, \ee)>\omega^{\omega^\xi}$, there exists $(x_t)_{t\in \ttt_{\omega^{\omega^\xi}}}\subset B_X$ so that $(x_{t|_i})_{i=1}^{|t|}\in J(K+L, \ee)$ for all $t\in \ttt_{\omega^{\omega^\xi}}$ \cite{C}.  By the definition of $J(K+L, \ee)$, this means that for each $v\in MAX(\ttt_{\omega^{\omega^\xi}})$ and each $t\prec v$, there exists a functional $x^*_{t,v}\in K+L$ so that \begin{equation} x^*_{t,v}(x_s- x_u)\geqslant \ee \text{\ \ \ }\forall s\prec t \prec u\preceq v, v\in MAX(\ttt_{\omega^{\omega^\xi}}). \end{equation}  Write $x^*_{t,v}=y^*_{t,v}+z^*_{t,v}$ with $y^*_{t,v}\in K$ and $z^*_{t,v}\in L$.   Define a function $f:\Lambda^e(\ttt_{\omega^{\omega^\xi}})\to \mathcal{P}\times \mathcal{P}$ by letting $f_1(s,t,v)= (U,V)$, where $(U,V)$ is the member of $\mathcal{P}\times \mathcal{P}$ such that $(y^*_{t,v}(x_s), z^*_{t,v}(x_s))\in U\times V$.  By Corollary \ref{cut to the chase}, there exists an extended monotone map $(\theta_1, e_1)$ of $\ttt_{\omega^{\omega^\xi}}$ into itself and a pair $(U,V)\in \mathcal{P}\times \mathcal{P}$ so that for all $(s,t,v)\in \Lambda^e(\ttt_{\omega^{\omega^\xi}})$, $y^*_{\theta_1(t), e_1(v)}(x_{\theta_1(s)})\in U$ and $z^*_{\theta_1(t), e_1(v)}(x_{\theta_1(s)})\in V$.  Note that we may replace $x_s$ with $x_{\theta_1(s)}$, $y^*_{t,v}$ with $y^*_{\theta_1(t), e_1(v)}$, and $z^*_{t,v}$ with $z^*_{\theta_1(t), e_1(v)}$ and assume that \begin{equation} (y^*_{t,v}+z^*_{t,v})(x_s-x_u)\geqslant \ee \text{\ \ }\forall s\prec t \prec u\preceq v, v\in MAX(\ttt_{\omega^{\omega^\xi}}),\end{equation} \begin{equation} y^*_{t,v}(x_s)\in U, \text{\ }z^*_{t,v}(x_s)\in V \text{\ }\forall (s,t,v)\in \Lambda^e(\ttt_{\omega^{\omega^\xi}}). \end{equation}  We note that $(2)$ holds after we make these replacements since for $s\prec t \prec u \preceq v\in MAX(\ttt_{\omega^{\omega^\xi}})$, $\theta_1(s)\prec \theta_1(t)\prec \theta_1(u)\preceq e_1(v)$, $e_1(v)\in MAX(\ttt_{\omega^{\omega^\xi}})$.  What we have done at this point is obtained vectors in $B_X$ and functionals in $K$ and $L$ so that the values $y^*_{t,v}(x_s)$ are nearly all equal for all $(s, t, v)\in \Lambda^e(\ttt_{\omega^{\omega^\xi}})$, the same holds for the values $z^*_{t,v}(x_s)$, without losing the property that the functionals $y^*_{t,v}+z^*_{t,v}$ still separate convex hulls of initial segments from tails.  The next step will be to make the values of $y^*_{t,v}(x_s)$ and $z^*_{t,v}(x_s)$ essentially constant when $t\prec s$ without losing the properties we have already obtained.

Next, define a function $f_2:\Lambda^e(\ttt_{\omega^{\omega^\xi}})\to \mathcal{P}\times \mathcal{P}$ by letting $f_2(t,u,v)=(U', V')$, where $(U', V')$ is the member of $\mathcal{P}\times \mathcal{P}$ such that $$(y^*_{t,v}(x_u), z^*_{t,v}(x_u))\in U'\times V'.$$  Using Corollary \ref{cut to the chase} in a way similar to the previous paragraph, we obtain an extended monotone map $(\theta_2, e_2)$ of $\ttt_{\omega^{\omega^\xi}}$ into itself and a pair $(U', V')\in \mathcal{P}\times \mathcal{P}$ so that for all $(t,u,v)\in \Lambda^e(\ttt_{\omega^{\omega^\xi}})$, $y^*_{\theta_2(t), e_2(v)}(x_{\theta_2(u)})\in U'$ and $z^*_{\theta_2(t), e_2(v)}(x_{\theta_2(u)})\in V'$.  Again, we may replace $x_u$ by $x_{\theta_2(u)}$, $y^*_{t,v}$ by $y^*_{\theta_2(t), e_2(v)}$, and $z^*_{t,v}$ by $z^*_{\theta_2(t), \theta_2(v)}$, note that (2) and (3) are still valid with these replacements, and assume \begin{equation} (y^*_{t,v}+z^*_{t,v})(x_s-x_u)\geqslant \ee \text{\ \ }\forall s\prec t \prec u\preceq v, v\in MAX(\ttt_{\omega^{\omega^\xi}}),\end{equation} \begin{equation} y^*_{t,v}(x_s)\in U, \text{\ }z^*_{t,v}(x_s)\in V \text{\ }\forall (s,t,v)\in \Lambda^e(\ttt_{\omega^{\omega^\xi}}), \end{equation}  \begin{equation} y^*_{t,v}(x_u)\in U', \text{\ }z^*_{t,v}(x_u)\in V' \text{\ }\forall (t,u,v)\in \Lambda^e(\ttt_{\omega^{\omega^\xi}}). \end{equation}

Let $\theta, \phi:\ttt_{\omega^{\omega^\xi}}\to \ttt_{\omega^{\omega^\xi}}$ be monotone maps satisfying the conclusion of Lemma \ref{shuffling}. Let $e$ be any extension of $\phi$.  Then for any $s\preceq t \prec u \preceq v$ with $v\in MAX(\ttt_{\omega^{\omega^\xi}})$, $\theta(s)\prec \phi(t) \prec \theta(u) \preceq e(v)$. This means that $(\theta(s), \phi(t), e(v)), (\phi(t), \theta(u), e(v))\in \Lambda^e(\ttt_{\omega^{\omega^\xi}})$.   Fix any $\lambda_1\in U$, $\lambda_2\in V$, $\mu_1\in U'$, and $\mu_2\in V'$.  Fix $s\preceq t \prec u\preceq v$, $v\in MAX(\ttt_{\omega^\xi})$.   Then using (4)-(6) and recalling that each of the four sets $U, V, U'$, and $V'$ has diameter not exceeding $\delta$, we deduce $$\ee \leqslant (y^*_{\phi(t), e(v)}+z^*_{\phi(t), e(v)})(x_{\theta(s)}- x_{\theta(u)}) \leqslant \lambda_1 - \mu_1 + \lambda_2 - \mu_2 +4\delta.$$  Thus we deduce $\max\{\lambda_1-\mu_1, \lambda_2-\mu_2\}\geqslant \ee/2 - 2\delta$.  Suppose $\lambda_1-\mu_1\geqslant \ee/2-2\delta$.  Then for arbitrary $s\preceq t \prec u \preceq v$, $v\in MAX(\ttt_{\omega^{\omega^\xi}})$, $$y^*_{\phi(t), e(v)}(x_{\theta(s)}- x_{\theta(u)}) \geqslant \lambda_1 -\mu_1 - 2\ee \geqslant \ee/2- 4\delta = \ee/2- \ee/6 = \ee/3.$$ Since $y^*_{\phi(t), e(v)}\in K$, this yields that $(x_{\theta(t|_i)})_{i=1}^{|t|}\in J(K, \ee/3)$ for all $t\in \ttt_{\omega^{\omega^\xi}}$.  Another appeal to \cite{C} yields that $j(K, \ee/3)>\omega^{\omega^\xi}$.

The second statement follows from the fact that with $K=A^*B_{Y^*}$, $L=B^*B_{Y^*}$, $(A+B)^*B_{Y^*}\subset K+L$.  Therefore $j((A+B)^*B_{Y^*}, \ee)>\omega^{\omega^\xi}$ for some $\ee>0$ implies that $j(K+L, \ee)>\omega^{\omega^\xi}$, and either $j(K, \ee/3)>\omega^{\omega^\xi}$ or $j(L, \ee/3)>\omega^{\omega^\xi}$.  

\end{proof}

\end{document}